%
%

\let\iflabels\iffalse
\let\iftxfonts\iftrue
\let\iflitnum\iftrue
\let\ifsrcltx\iftrue
\let\ifindexindication\iffalse

\documentclass[11pt]{amsart}

\usepackage{amssymb,amsmath}
\usepackage[all]{xy}
\usepackage{graphicx}

\newcommand\CC{{\mathbb{C}}}

\newcommand\QQ{{\mathbb{Q}}}
\newcommand\RR{{\mathbb{R}}}
\newcommand\ZZ{{\mathbb{Z}}}

\newcommand\al{\alpha}
\newcommand\Bt{{\mathsf{B}}}
\newcommand\bt{\beta}
\newcommand\Gm{\Gamma}
\newcommand\gm{\gamma}

\newcommand\dt{\delta}
\newcommand\e{\varepsilon}

\renewcommand\th{\vartheta}
\newcommand\kp{\kappa}
\newcommand\s{\sigma}
\newcommand\Ph{\Phi}
\newcommand\ph{\varphi}
\newcommand\ch{\chi}
\newcommand\Ps{\Psi}
\newcommand\ps{\psi}
\newcommand\Om{\Omega}
\newcommand\om{\omega}


\newcommand\im{{\mathrm{Im}\,}}
\newcommand\cp{{\mathrm{cp}}}
\newcommand\SL{{\mathrm{SL}}}
\newcommand\PSL{{\mathrm{PSL}}}
\newcommand\hp{{\mathfrak H}}
\newcommand\uhp{{\mathfrak H}}
\newcommand\lhp{{\mathfrak H}^-}
\newcommand\cf[1]{S_{\!#1}}
\newcommand\oh{\mathrm{O}}
\newcommand\dsv[2]{{}^-\mathcal{D}_{#1}^{#2}}
\newcommand\uldsv[2]{{}^\pm\!\mathcal{D}_{#1}^{#2}}
\newcommand\udsv[2]{{}^+\mathcal{D}_{#1}^{#2}}

\newcommand\proj[1]{{\mathbb{P}_{#1}^1}}
\newcommand\tG{{\tilde G}}
\newcommand\tGm{{\tilde\Gm}}
\newcommand\tZ{{\tilde Z}}

\newcommand\V{{\mathcal V}}
\newcommand\pD{{}^+{\mathcal D}}
\newcommand\mD{{}^-{\mathcal D}}
\newcommand\hypg[2]{\,{}_{#1}F_{\!#2}}
\newcommand\T{{\mathbf{T}}}
\newcommand\Gf{\Gamma} 
\newcommand\pol{{\mathrm{pol}}}
\newcommand\prs{{\mathrm{ps}}}

\newcommand\fd{{\mathfrak F}}


\newcommand\grf[2]{
\includegraphics[width=#1cm]{#2.pdf}}

\iflabels
  \makeatletter
  
  \makeatother
  \usepackage{showlabels}
\fi

\iftxfonts
\usepackage[varg]{txfonts}
\fi

\ifsrcltx
\usepackage[active]{srcltx}
\fi

\def\be#1\ee{\begin{equation}#1\end{equation}}
\def\bad#1\ead{\be\begin{aligned}#1\end{aligned}\ee}
\def\badl#1#2\eadl{\be\label{#1}\begin{aligned}#2\end{aligned}\ee}
\renewcommand\={\,=\,}

\newtheorem{thm}{Theorem}[section]
\newtheorem{prop}[thm]{Proposition}
\newtheorem{cor}[thm]{Corollary}
\newtheorem{lem}[thm]{Lemma}

\theoremstyle{definition}
\newtheorem{defn}[thm]{Definition}


\renewcommand\setminus{\smallsetminus}

\usepackage{color}

\definecolor{blue}{rgb}{0,0,1}
\definecolor{red}{rgb}{1,0,0}
\definecolor{green}{rgb}{0,.6,.2}
\definecolor{purple}{rgb}{1,0,1}

\long\def\red#1\endred{{\color{red}#1}}
\long\def\blue#1\endblue{{\color{blue}#1}}
\long\def\purple#1\endpurple{{\color{purple}#1}}
\long\def\green#1\endgreen{{\color{green}#1}}

\makeatletter
\newcommand\matc[4]{\left( {#1\@@atop #3}{#2\@@atop #4}\right)}
\newcommand\matr[4]{\left( {\hfill #1\@@atop\hfill #3}{\hfill
#2\@@atop\hfill #4}\right)}
\makeatother

\newcommand\rmrk[1]{\medskip\par\noindent\emph{#1. }}

\numberwithin{equation}{section}

\begin{document}
\title{Modular cocycles and cup product}
\author{Roelof Bruggeman}
\address{Mathematisch Instituut Universiteit Utrecht, Postbus 80010,
3508 TA \ Utrecht, Nederland}
\email{r.w.bruggeman@uu.nl}

\author{YoungJu Choie}
\address{Dept.~of Mathematics and PMI, POSTECH, Pohang, Korea 1790--784}
\email{yjc@postech.ac.kr}

\begin{abstract}We extend to positive real weights Haberland's formula
giving a cohomological description of the Petersson scalar product of
modular cusp forms of positive even weight. This relation is based on
the cup product of an Eichler cocycle and a Knopp cocycle.\\
We also consider the cup product of two Eichler cocycles attached to
modular forms. In the classical context of integral weights at least
$2$ this cup product is uninteresting. We show evidence that for real
weights this cup product may very well be non-trivial. We approach the
question whether the cup product is a non-trivial coinvariant by
duality with a space of entire modular forms. Under suitable conditions
on the weights this leads to an explicit triple integral involving
three modular forms. We use this representation to study the cup
product numerically.

\end{abstract}

\keywords{coinvariants, cup product, Eichler cocycles, Petersson scalar
product, Haberland formula, modular forms, triple integral, period of
modular forms, cohomology of arithmetic groups}

\subjclass[2010]{primary 11F67, 11F03 ; secondary 11F30, 11F75, 22E50 }

\maketitle

\section{Introduction}

Due to Eichler \cite{Ei57} and Knopp \cite{Kn74} we have linear
injective maps from spaces of cusp forms of a given weight $k$ into a
parabolic cohomology group $H^1_p(\Gm;V)$, where $V$ is a
finite-dimensional $\Gm$-module if $k\in \ZZ_{\geq 2}$ (Eichler)
and $V$ is a specific infinite dimensional module if the weight $k$ is
real
(Knopp).

The second cohomology group $H^2(\Gm;V)$ is zero for all modules $V$ and
for all cofinite discrete subgroups of $\SL_2(\RR)$. However for
 parabolic cohomology we have
\be\label{H2pci} H^2_p(\Gm;V) \cong V_\Gm = V \bigm/ \sum_{\gm\in \Gm }
V|(1-\gm)\,, \ee
the space of coinvariants, provided $\Gm$ has cusps; see
\cite[(11.9)]{BLZ}.
(We work with right $\Gm$-modules that are vector spaces over $\CC$, and
denote the action by a slash.)

The cohomology group $H^2_p(\Gm;\CC)\cong \CC$ is used in the
cohomological description of the Petersson scalar product on spaces of
cusp forms of integral weights at least~$2$ by Haberland \cite{Ha}. The
cup product is used to go from a pair of $1$-cocycles associated to
cusp forms to $H^2_p(\Gm;\CC)\cong \CC$. This method has been extended
and used by Kohnen and Zagier \cite[p.~244--245]{KZ}, Zagier
\cite[Theorem~1]{Za85}, Cohen \cite{Coh},  Pasol and Popa \cite{PP}  
and Choie, Park and Zagier
\cite{CPZ}. The Petersson scalar product of Maass cusp forms can also
be formulated in terms of a cup product \cite[Theorem 19.1]{BLZ}.

Here we investigate the cup product of two cohomology classes associated
to two holomorphic modular forms that may have different real weights
(one of them should be a cusp form). In general this is an rather
abstract exercition providing a coinvariant in a complicated
$\Gm$-module, which is the tensor product $M_1\otimes M_2$ of two
modules of infinite dimension. With an intertwining operator
$M_1\otimes M_2 \rightarrow N$ we obtain from the cup product a
coinvariant in a simpler module $N$ that may be easier to study. We
proceed with two choices. One choice leads to the extension to real
weights of the cohomological description of the Petersson scalar
product of cusp forms. The other choice leads to a trilinear form on
the product of three spaces of modular forms.

We restrict our attention the the full modular group $\Gm=\SL_2(\ZZ)$.
We work with holomorphic modular forms of real weight, with a
corresponding multiplier system. To a modular form $f$ of weight
$r\in \RR$ and multiplier system $v$ we may associate to types of
$1$-cocycles. The first type is the homogeneous Eichler cocycle
\be c_f(z_1,z_2;t) \= \int_{\tau=z_1}^{z_2} f(\tau)\, (\tau-t)^{r-2}\,
d\tau \ee
where the variable $t$ runs through the lower half-plane $\lhp$, such
that $(z-t)^{r-2}$ is well defined. The points $z_1$ and $z_2$ are in
the upper half-plane $\uhp$; if $f$ is a cusp form we may take $z_1$
and/or $z_2$ equal to a cusp of~$\Gm$.

The other type is the Knopp cocycle, which depends on the modular form
in a conjugate-linear way.
\be c^K_f(z_1,z_2;z) \= \int_{\tau=z_1}^{z_2}\overline{ f(\tau)} \,
(\bar\tau-z)^{r-2}\, d\tau \,.\ee
This cocycle has values in the holomorphic functions on the upper
half-plane. It is defined for $z_1,z_2\in \uhp$; if $f$ is a cusp form
the points $z_1$ and $z_2$ may be cusps.

When dealing with cocycles it is important to indicate the module in
which they take their values. Where possible we follow the notations
and conventions of \cite{BCD}, which we recall in more detail in
Section~\ref{sect-afcoc}. For the Eichler cocycles we use two modules
$\dsv{v,2-r}\om \subset \dsv{v,2-r}\infty$ of holomorphic functions on
the lower half-plane $\lhp$ in which the group $\Gm$ acts with weight
$2-r$ and multiplier system $v$. The elements of $\dsv{v,2-r}\om$
extend holomorphically into the upper half-plane, the elements of
$\dsv{v,2-r}\infty$ extend to functions in $C^\infty( \lhp\cup\RR)$.
Both extensions also satisfy some condition at~$\infty$. If
$z_1,z_2\in \uhp$ then the values of $c_f$ are in $\dsv{v,2-r}\om$. If
 cusps are involved we need to use $\dsv{v,2-r}\infty$. The Knopp
 cocycles take values in similar modules
$\udsv{v^{-1},2-r}\om \subset \udsv{v^{-1},2-r}\infty$ of holomorphic
functions on $\uhp$. See Section~\ref{sect-afcoc} for a further
discussion.

The association $f\mapsto c_f$ induces an injective linear map from the
space $\cf r(\Gm,v)$ of holomorphic modular cusp forms of weight $r>0$
and corresponding multiplier system $v$ to the parabolic cohomology
group $H^1_p(\Gm;\dsv{v,2-r}\infty)$, and $f\mapsto c^K_f$ induces a
conjugate linear map
$\cf r(\Gm;v) \rightarrow H^1_p(\Gm;\udsv{v^{-1},2-r}\infty)$.

The space of cusp forms $\cf r(\Gm,v)$ is contained in the much larger
space $A_r(\Gm,v)$ of all modular forms of weight $r$ and multiplier
system~$v$
(without any condition on the growth at the cusps). This space has
infinite dimension for all real weights. The association
$f\mapsto c_f$, respectively $f\mapsto c^K_f$, induces a linear map
$A_r(\Gm,v) \rightarrow H^1(\Gm;\dsv{v,2-r}\om)$, respectively a
conjugate linear map
$A_r(\Gm,v) \rightarrow H^1(\Gm;\udsv{v^{-1},2-r}\om)$. If
$r\in \ZZ_{\geq 2}$ the kernels of these maps have infinite dimension.
For $r\in \RR\setminus \ZZ_{\geq 2}$ these maps are
injective.\smallskip

Let $M_1$ and $M_2$ be $\Gm$-modules. As we will discuss in
Section~\ref{sect-cp} in more detail, there is a cup product
construction in parabolic cohomology that extends, under some
conditions, to a bilinear map
\be \cup: H^1(\Gm;M_1) \times H^1_p(\Gm;M_2) \longrightarrow
H^2_p(\Gm;M_1\otimes M_2)\,.\ee
In parabolic cohomology there is a (not canonical) isomorphism
$H^2_p(\Gm;V) \cong V_\Gm$. The space of coinvariants $V_\Gm$ has been
defined in~\eqref{H2pci}.

We apply this with $M_1 = \dsv{v_1,2-r_1}\om$ and $M_2$ either the
$\Gm$-module $\dsv{v_2,2-r_2}\infty$ or the $\Gm$-module
$\udsv{v_2^{-1},2-r_2}\infty$. Composition with the maps from modular
forms to Eichler or Knopp cocycles we get maps
\badl{CEE} C_{EE}&: A_{r_1}(\Gm,v_1) \times \cf{r_2}(\Gm,v_2)
\rightarrow \Bigl( \dsv{v_1,2-r_1}\om \otimes
\dsv{v_2,2-r_2}\infty\Bigr)_\Gm\,,\\
C_{EE}&(f_1,f_2) \= [c_{f_1}]\cup[c_{f_2}]\,, \eadl
which is bilinear, and
\badl{CEK} C_{EK}&: A_{r_1}(\Gm,v_1) \times \cf{r_2}(\Gm,v_2)
\rightarrow \Bigl( \dsv{v_1,2-r_1}\om \otimes
\udsv{v_2^{-1},2-r_2}\infty\Bigr)_\Gm\,,\\
C_{EK}&(f_1,f_2) \= [c_{f_1}]\cup[c^K_{f_2}]\,, \eadl
which is linear in $f_1$ and conjugate linear in~$f_2$.

The tensor products in which these maps take their values are large and
have a complicated structure.  Simplifications are possible.

First we simplify \eqref{CEE}. Since elements of $\dsv{v_1,2-r_1}\om $
and of$\dsv{v_2,2-r_2}\infty$ are holomorphic functions on $\lhp$ their
product is also holomorphic on~$\lhp$. Considering actionsand the behavior at the
boundary  we check that multiplication of functions
induces a linear intertwining operator of $\Gm$-modules
\be M: \dsv{v_1,2-r_1}\om \otimes \dsv{v_2,2-r_2}\infty \rightarrow
\dsv{v_1v_2,4-r_1-r_2}\infty\,.\ee
We restrict our attention to the composite bilinear map
\be M\circ C_{EE} :A_{r_1}(\Gm,v_1) \times \cf{r_2}(\Gm,v_2) \rightarrow
\bigl( \dsv{v_1v_2,4-r_1-r_2}\infty \bigr)_\Gm\,,\ee
The module $ \dsv{v_1v_2,4-r_1-r_2}\infty$ still has infinite dimension,
but we are able to do some explicit work.

In the case of an Eichler and a Knopp cocycle  in \eqref{CEK},
there is not such a product construction. The sole case that
we can handle further is the case that the weights and multiplier
systems are equal. There is a $\Gm$-invariant bilinear form
$[\cdot,\cdot]_{2-r}$ on
$\dsv{v,2-r}\infty \times \udsv{v^{-1},2-r}\infty$,  which induces a
$\Gm$-equivariant linear map $D:
\dsv{v,2-r}\infty \times \udsv{v^{-1},2-r}\infty\rightarrow \CC$. The
image is the trivial $\Gm$-module $\CC$, and hence $\CC_\Gm=\CC$. With
use of $D \circ C_{EK}$ we can extend Haberland's cohomological
description of the Petersson scalar product to all real weights. See
Section~\ref{sect-Hab}.\medskip

In case of two Eichler cocycles the coinvariant $M C_{EE}(f_1,f_2)$ can
be represented by many elements of $\dsv{v_1,v_2,4-r_1-r_2}\infty$. In
Section~\ref{sect-cp} we arrive at the following representative that we
consider convenient:
\be \cp(f_1,f_2) \= c_{f_1}(\rho-1,\rho) \cdot c_{f_2}(i,\infty)\,,\ee
with $\rho=e^{\pi i /3}$ and $\rho-1=e^{2\pi i/3}$ the vertices in
$\uhp$ of the standard fundamental domain of the modular group.

The question is whether $\cp(f_1,f_2)$ represents the trivial
coinvariant. In other words, whether there are $a$ and $b$ in the
module $M=\dsv{v_1,v_2,4-r_1-r_2}\infty$ such that
$\cp(f_1,f_2) = a|(1-S) + b|(1-T)$ with the generators $T=\matc 1101$
and $S=\matr0{-1}10$ of $\Gm$.

In the classical situation of weights $r_1,r_2\in \ZZ_{\geq 2}$ it turns
out that $\cp(f_1,f_2)$ represents the trivial coinvariant in
$\dsv{v_1v_2,4-r_1-r_2}\infty$. See Subsection~\ref{sect-class}. This
holds for the modular group; for subgroups of finite index the
situation is a bit more complicated. We will give numerical evidence
that there are $f_1$ and $f_2$ (with real non-integral weights) for
which $\cp(f_1,f_2)$ represents a non-trivial coinvariant. (This will
be done in Subsection~\ref{sect-num}.)

In Proposition~\ref{prop-triv} we will show that there are modules
$M_1\supset \dsv{v_1,v_2,4-r_1-r_2}\infty$ in which the coinvariant
represented by $\cp(f_1,f_2)$ is trivial. This is not surprising, since
every cocycle is a coboundary if one works with a suitably large
module.
\medskip

The further investigation of the coinvariant represented by
$\cp(f_1,f_2)$ we use the $\Gm$-invariant bilinear form
$[\cdot,\cdot]_r$ mentioned  above. In
Theorem~\ref{thm-dual} we will show that there is for real $r$ and
corresponding multiplier system a bilinear form $[\cdot,\cdot]_{2-r}$
on $\udsv{v^{-1},2-r}{-\infty} \times \dsv{v,2-r}\infty$. The
$\Gm$-module
$\udsv{v^{-1},2-r}{-\infty} \supset \udsv{v^{-1},2-r}\infty$ consists
of the holomorphic functions on~$\uhp$ with at most polynomial growth
at the boundary.

Suppose that we have an $\Gm$-invariant
$\bt\in \bigl(\udsv{v_1^{-1}v_2^{-1},4-r_1-r_2}{-\infty} \bigr)^\Gm$.
Then
\[ \bigl[ \bt, a|(1-T)+b|(1-S) \bigr]_{2-r} \=0\]
for all $a,b \in \dsv{v,2-r}\infty$. So if
$\bigl[ \bt, \cp(f_1,f_2) \bigr]_{2-r}\neq 0$ then we know that
$\cp(f_1,f_2)$ represents a non-trivial coinvariant.

The nice fact is that we know
$\bigl(\udsv{v_1^{-1}v_2^{-1},4-r_1-r_2}{-\infty} \bigr)^\Gm$. It is
the space of entire modular forms
$M_{4-r_1-r_2}(\Gm,v_1^{-1}v_2^{-1})$. We put $v_3=v_1^{-1}v_2^{-1}$
and $r_3=4-r_1-r_2$, and define a trilinear form
\be \T(f_1,f_2,f_3) \= \bigl[ f_3,\cp(f_1,f_2) \bigr]_{r_3}\ee
on $A_{r_1}(\Gm,v_1) \times \cf{r_2}(v_2) \times M_{r_3}(\Gm,v_3)$. If
for given $f_1,f_2$ we can find $f_3$ such that
$\T(f_1,f_2,f_3) \neq 0$, then we know that $\cp(f_1,f_2)$ represents a
non-trivial coinvariant.

The drawback is that we do not know whether all $\Gm$-invariant linear
forms on $\dsv{v_3^{-1},r_3}\infty$ arise from $M_{r_3}(\Gm,v_3)$. So
it may happen that $\cp(f_1,f_2)$ represents a non-trivial coinvariant
and still $\T(f_1,f_2,f_3)$ vanishes for all entire modular
forms~$f_3$.

To investigate the trilinear form $\T$ we can unravel all definitions,
and arrive at a complicated description. See Proposition~\ref{prop-4i}.
We obtained the following, slightly simpler, description.
\begin{thm}\label{thm-tri}Let
\be\label{f123} f_1 \in A_{r_1}(\Gm,v_1)\,,\quad f_2\in
\cf{r_2}(\Gm,v_2)\,,\quad f_3\in M_{r_3}(\Gm,v_3)\ee
with
\be\label{rvcond0}
r_1+r_2+r_3\=4\,,\quad r_1<2, \quad 0<r_2<2\,,
\quad v_1v_2v_3 \= 1\,. \ee
Then
\begin{align}\nonumber
\T(f_1,f_2,f_3)&\= \frac{(-2i)^{r_3}\,\Gf(r_3)}{\Gf(2-r_1)\,
\Gf(2-r_2)}\, \int_{\tau_1=\rho-1}^{\rho} f_1(\tau_1)\,
\int_{\tau_2=i}^{\infty} \, f_2(\tau_2) \\
 \label{tiu} &\qquad\hbox{} \cdot
 \int_{u=0}^1 f_3\bigl( \tau_1+u(\tau_2-\tau_1)\bigr)\,
 u^{1-r_2}\,(1-u)^{1-r_1}\, du\, d\tau_2\, d\tau_1
 \\\nonumber
 &\=\frac{(-2i)^{r_3}\,\Gf(r_3)}{\Gf(2-r_1)\, \Gf(2-r_2)}\,
\int_{\tau_1=\rho-1}^{\rho} f_1(\tau_1)\, \int_{\tau_2=i}^{\infty} \,
 f_2(\tau_2) \\
 \label{tii}
 &\qquad\hbox{} \cdot
 \int_{z=\tau_1}^{\tau_2}\, f_3(z) \,\Bigl(
 \frac{z-\tau_1}{\tau_2-\tau_1}\Bigr)^{1-r_2}\, \Bigl(
 \frac{\tau_2-z}{\tau_2-\tau_1}\Bigr)^{1-r_1}\, \frac{
 dz}{\tau_2-\tau_1}\, d\tau_2\,d\tau_1\,.
\end{align}
 The paths of integration for $\tau_1$ and $\tau_2$ are geodesic 
segments.  
In \eqref{tiu} the path of integration   over $u$  
 is the real interval $[0,1]$; in
\eqref{tii} the path of $z$ is any path from $\tau_1$ to $\tau_2$ that
does not cross $\ell_{\tau_1,\tau_2}\setminus s_{\tau_1,\tau_2}$, where
$s_{\tau_1,\tau_2}$ is the geodesic segment from $\tau_1$ to $\tau_2$,
and $\ell_{\tau_1,\tau_2}$ is the geodesic line through $\tau_1$
and~$\tau_2$.
\end{thm}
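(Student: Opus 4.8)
The plan is to unravel the definitions step by step, starting from $\T(f_1,f_2,f_3) = \bigl[ f_3,\cp(f_1,f_2)\bigr]_{r_3}$ and the explicit representative $\cp(f_1,f_2) = c_{f_1}(\rho-1,\rho)\cdot c_{f_2}(i,\infty)$. First I would write out the two Eichler integrals: $c_{f_1}(\rho-1,\rho;t) = \int_{\tau_1=\rho-1}^{\rho} f_1(\tau_1)\,(\tau_1-t)^{r_1-2}\,d\tau_1$ with $t\in\lhp$, and similarly $c_{f_2}(i,\infty;t) = \int_{\tau_2=i}^{\infty} f_2(\tau_2)\,(\tau_2-t)^{r_2-2}\,d\tau_2$, the latter converging at $\infty$ because $f_2$ is a cusp form and $0<r_2<2$. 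Their product is a holomorphic function of $t$ on $\lhp$ lying in $\dsv{v_1v_2,4-r_1-r_2}\infty = \dsv{v_3^{-1},r_3}\infty$. Then I would substitute this into the pairing $[\cdot,\cdot]_{r_3}$ between $M_{r_3}(\Gm,v_3)\subset\bigl(\udsv{v_3,r_3}{-\infty}\bigr)^\Gm$ and $\dsv{v_3^{-1},r_3}\infty$, recalled from Theorem~\ref{thm-dual} and Section~\ref{sect-afcoc}; this pairing is given by a contour integral in the $t$-variable of $f_3(t)$ against the function, with an appropriate normalizing constant.

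The main work is then a Fubini interchange and a single one-variable integral evaluation. After inserting the product of the two Eichler integrals into the pairing integral, I would interchange the order of integration so that the inner integral is over $t$ (the pairing variable), leaving
\[
\int_{\tau_1=\rho-1}^{\rho} f_1(\tau_1) \int_{\tau_2=i}^{\infty} f_2(\tau_2)\,
\Bigl( \int_t f_3(t)\,(\tau_1-t)^{r_1-2}\,(\tau_2-t)^{r_2-2}\, dt\Bigr)\, d\tau_2\, d\tau_1\,.
\]
The crucial step is recognizing the inner $t$-integral as a beta-type integral: after the substitution $t = \tau_1 + u(\tau_2-\tau_1)$ (mapping the $t$-contour to the segment parametrized by $u$, or to $[0,1]$ when the contour is deformed to the geodesic from $\tau_1$ to $\tau_2$), the factors $(\tau_1-t)^{r_1-2}$ and $(\tau_2-t)^{r_2-2}$ become $(-u(\tau_2-\tau_1))^{r_1-2}$ and $((1-u)(\tau_2-\tau_1))^{r_2-2}$, so that collecting powers of $(\tau_2-\tau_1)$ produces the factor $(\tau_2-\tau_1)^{r_1+r_2-3} = (\tau_2-\tau_1)^{1-r_3}$ together with the weight $u^{r_1-2}(1-u)^{r_2-2}$; matching this against the normalization in the pairing, and using $r_1+r_2+r_3=4$ to control the exponents, yields the Euler-beta-integral kernel $u^{1-r_2}(1-u)^{1-r_1}$ of \eqref{tiu} and the gamma-factor $\frac{(-2i)^{r_3}\,\Gf(r_3)}{\Gf(2-r_1)\,\Gf(2-r_2)}$. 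The passage from \eqref{tiu} to \eqref{tii} is then purely the substitution $z = \tau_1 + u(\tau_2-\tau_1)$, $dz = (\tau_2-\tau_1)\,du$, rewriting $u = (z-\tau_1)/(\tau_2-\tau_1)$ and $1-u = (\tau_2-z)/(\tau_2-\tau_1)$.

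The delicate points — and where I expect the real obstacle to lie — are analytic rather than algebraic. One must justify the interchange of integrations: the $\tau_2$-integral runs out to the cusp $\infty$, so one needs the exponential decay of the cusp form $f_2$ to dominate the polynomial growth of $f_3$ (an entire modular form) and the at-most-polynomial growth of the kernel, and one must check absolute convergence of the triple integral under the hypotheses $r_1<2$, $0<r_2<2$; the constraint $r_1<2$ is presumably exactly what makes the exponent $1-r_1 > -1$ so the $u$-integral converges at $u=1$, and $r_2>0$ makes $1-r_2>-1$ for convergence at $u=0$, while $r_2<2$ (together with $f_2$ being a cusp form) handles convergence at the cusp. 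One must also track the branches of the multivalued powers $(\tau-t)^{r_i-2}$, $u^{1-r_2}$, $(1-u)^{1-r_1}$ consistently — this is why the statement carefully specifies that the $\tau_i$-paths are geodesic segments and the $z$-path in \eqref{tii} may not cross $\ell_{\tau_1,\tau_2}\setminus s_{\tau_1,\tau_2}$, which is precisely the cut locus of the power functions in the $z$-plane. Finally, moving the $t$-contour in the pairing (which a priori lives in $\lhp$ or near the real axis) onto the geodesic segment $s_{\tau_1,\tau_2}$ in $\uhp$ requires a contour-deformation argument using holomorphy of $f_3$ and decay of $f_2$; this deformation, and the bookkeeping of which side of the cut one ends up on, is the step I would be most careful about. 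Once these analytic justifications are in place, everything else reduces to the beta-integral identity and the substitution already described, and the explicit constant follows from the standard formula $\int_0^1 u^{a-1}(1-u)^{b-1}\,du = \Gf(a)\Gf(b)/\Gf(a+b)$ with $a = 2-r_2$, $b = 2-r_1$, $a+b = 4-r_1-r_2 = r_3$.
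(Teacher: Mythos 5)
The central step of your proposal does not work. You treat the bracket $[f_3,\cdot]_{r_3}$ as if it were the plain contour pairing of $f_3$ against the product $c_{f_1}(\rho-1,\rho)\cdot c_{f_2}(i,\infty)$, but by Theorem~\ref{thm-dual} (see \eqref{dualdef}) the duality is defined through the lift $\s_{r_3}$, an integral transform with hypergeometric kernel \eqref{sigrdef}; in coefficient form it is the \emph{twisted} pairing $\sum_n \frac{n!}{(r_3)_n}c_n d_n$, not $\sum_n c_n d_n$. As a result your ``Fubini plus substitution'' evaluation of the inner integral cannot produce the stated kernel: putting $t=\tau_1+u(\tau_2-\tau_1)$ into $(\tau_1-t)^{r_1-2}(\tau_2-t)^{r_2-2}\,dt$ gives the weight $u^{r_1-2}(1-u)^{r_2-2}\,(\tau_2-\tau_1)^{1-r_3}\,du$, whereas \eqref{tiu} has $u^{1-r_2}(1-u)^{1-r_1}$; these agree only when $r_1+r_2=3$, and no ``matching against the normalization'' can convert one into the other --- the reflection of the exponents is precisely the effect of the factor $n!/(r_3)_n$ in the duality. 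Likewise the prefactor $\Gf(r_3)/\bigl(\Gf(2-r_1)\Gf(2-r_2)\bigr)$ cannot ``follow from the standard beta integral'': that $u$-integral is never evaluated (the final formula still contains $f_3$ inside it), so invoking $\int_0^1u^{1-r_2}(1-u)^{1-r_1}du=\Bt(2-r_2,2-r_1)$ explains nothing.

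What is missing is the actual evaluation of $\bigl[f_3,\kp(\tau_1,\tau_2;\cdot)\bigr]_{r_3}$ with $\kp(\tau_1,\tau_2;\tau)=\bigl(\frac{\tau_1-\tau}{i-\tau}\bigr)^{r_1-2}\bigl(\frac{\tau_2-\tau}{i-\tau}\bigr)^{r_2-2}$, which is where essentially all the work in the paper lies: truncation $c_{f_2}(i,ia)$ and a continuity argument (Lemmas \ref{lem-a}, \ref{lem-Tlimit}); the fourfold integral of Proposition~\ref{prop-4i}; moving $(\tau_1,\tau_2)$ to $(i,ip)$ by equivariance in the universal covering group (Lemmas \ref{lem-kps3kp} and \ref{lem-gtf}); the explicit evaluation of $\s_{r_3}\kp(i,ip;\cdot)$ as a ${}_2F_1$ (Lemma \ref{lem-skr}); and finally Kummer relations together with collapsing the contour onto the cut $[0,1]$, where the jump of $(1-1/u)^{1-r_1}$ across the cut produces, via the reflection formula, both the factor $\Bt(2-r_1,2-r_2)^{-1}$ and the kernel $u^{1-r_2}(1-u)^{1-r_1}$ (Lemma \ref{lem-ti2}). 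Your remarks on absolute convergence, the role of $r_1<2$ and $0<r_2<2$, branch cuts, and the exponential decay of $f_2$ against the polynomial growth of $f_3$ are sensible but peripheral; without the hypergeometric analysis of the twisted pairing the argument does not go through, and the formula your computation would actually yield is not the one asserted in the theorem.
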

In \eqref{tii} we may just take the geodesic segment from $\tau_1$ to
$\tau_2$ as the path of integration for $z$.

In \S\ref{sect-num} we formulate in Proposition~\ref{prop-triFc} the
trilinear form $\T$ in terms of Fourier coefficients of the~$f_j$. This
formulation is suitable for numerical computations.

\rmrk{Overview of the paper}Section~\ref{sect-afcoc} recalls definitions
over various concepts in some more details that in this introduction.
Section~\ref{sect-cp} discusses the cup product.

In both cases that we consider (two Eichler cocycles, and the
combination of an Eichler cocycle and a Knopp cocycle) we use the
duality theorem Theorem~\ref{thm-dual} in Section~\ref{sect-dual}. To
prove the duality theorem we use principal series representations of
the universal covering group of $\SL_2(\RR)$. This requires further
definitions and discussion, which we have put at the end of the paper
in Section~\ref{sect-ucgps}.

In Section~\ref{sect-Hab} we extend to real positive weights Haberland's
relation between the Petersson scalar product and the cup product.

Sections \ref{sect-clp}--\ref{sect-tri} study the coinvariant
represented by $\cp(f_1,f_2)$. Section \ref{sect-clp} shows that in the
classical context this coinvariant is uninteresting.
Section~\ref{sect-EE} discusses the definition of the trilinear form
$\T$ and show explicitly that $\cp(f_1,f_2)$ represents the trivial
coinvariant over the module of all holomorphic functions on~$\lhp$.

The main work to prove Theorem~\ref{thm-tri} is done in
Section~\ref{sect-trip}. In Section~\ref{sect-num} we mention how we
approach the study of $\T(f_1,f_2,f_3)$ numerically.

\rmrk{Acknowledgements}The second author is partially supported by NRF
$2018R1A4A$ $1023590$ and NRF $2017R1A2B$ $2001807$.

\section{Modular forms and cohomology}\label{sect-afcoc}
\rmrk{Modular forms}By a holomorphic modular form $f$ of \emph{weight}
$r\in \RR$ with \emph{multiplier system} $v$ we mean a holomorphic
function $f:\uhp\rightarrow\CC$ such that
\be \label{mf}f(\gm z ) \= v(\gm)\, (cz+d)^r f(z) \qquad\text{for all }
\gm =\matc abcd \in \Gm \= \SL_2(\ZZ)\,.\ee
We take $\arg(cz+d)\in (-\pi,\pi]$. The multiplier system is a function
$v:\Gm \rightarrow \CC^\ast$ such that non-zero solutions of this
equation are possible. For the modular group we use $v=v[p]$ with
$p \equiv r \bmod 2$, where $v[p]$ is the multiplier system of the
$(2p)$-th power of the Dedekind eta-function:
\be\label{vpdef} v[p]\matc abcd \= \frac{\eta^{2p}(\gm z)}{(cz+d)^p
\eta^{2p}(z)}\,.\ee
The transformation behavior of the Dedekind eta function has a
multiplier system with values in the $24$-th roots of unity. See, eg,
\cite[Chap.~IX]{La}. Hence the multiplier system $v[p]$ depends only on
$p\bmod 12$. Any multiplier system suitable for weight $r$ satisfies
\be v\matr{-1}00{-1} \= e^{-\pi i r}\,.\ee

We denote by $A_r(v)=A_r(\Gm,v)$ the space of all holomorphic $f$
satisfying~\eqref{mf}. We do not impose any further restriction; so
$A_r(v)$ has infinite dimension. \emph{Entire modular forms} satisfy
the condition $f(z) = \oh(1)$ as $\im z\uparrow\infty$; we denote by
$M_r(v)=M_r(\Gm,v)$ the resulting subspace of $A_r(v)$. This space is
finite dimensional and zero if $r<  0$. A further restriction is the
condition of quick decay $f(z) = \oh\bigl( (\im z)^{-a}\bigr)$ as
$\im z\uparrow\infty$. It defines the subspace
$\cf r(v)=\cf r(\Gm,v)\subset M_r(v)$ of \emph{cusp forms}, which is
zero if $r\leq 0$.

\rmrk{Actions} For each weight $r$ and corresponding multiplier system
$v$ there are right actions of $\PSL_2(\ZZ) = \Gm/\{1,-1\}$; the action
$|_{v,r}$ is on functions on the upper half-plane $\uhp$, and
$f|^-_{v^{-1},r}$ on functions on the lower half-plane $\lhp$ given for
$\gm =\matc abcd$ by
\badl{actions} f|_{v,r}\gm(z) &\= v(\gm)^{-1}\, (cz+d)^{-r}\, f(\gm
z)&\quad&(z\in \uhp)\,,\\
f|_{v^{-1},r}^- \gm (t) &\= v(\gm) \, (ct+d)^{-r} \, f(\gm t)&&
(t\in \lhp)\,. \eadl
It turns out to be convenient to use the argument conventions
$\arg(cz+d) \in (-\pi,\pi]$ for $z\in \uhp$, and
$\arg(ct+d)\in [-\pi,\pi)$ for $t\in \lhp$. Hence we use $v(\gm)^{-1}$
in the action on functions on~$\uhp$ and $v(\gm)$ in the action on
functions of $\lhp$. These representations of $\Gm$ are trivial on the
center $\{1,-1\}$ of $\Gm$. So they are in fact representations of
$\PSL_2(\ZZ) \cong \Gm/\{1,-1\}$.

The property \eqref{mf} of modular forms is invariance under the action
$|_{v,r}$ of~$\Gm$. See \cite[\S2.1]{BCD} for more information.

\rmrk{Eichler cocycles}For $f\in A_r(v)$, $t\in \lhp$, $z_1,z_2\in \uhp$
we put
\be\label{Eidef} c_f(z_1,z_2;t) \= \int_{z=z_1}^{z_2} f(z)\,
(z-t)^{r-2}\, dz\,,\ee
with $\arg(z-t) \in \bigl( -\pi/2,3\pi/2\bigr)$. This defines
holomorphic functions on $\lhp$ satisfying
\bad c_f(z_1,z_3) &\= c_f(z_1,z_2)+c_f(z_2,z_3)\,,\\
c_f(\gm^{-1}z_1,\gm^{-1}z_2) &\= c_f(z_1,z_2)|^-_{v,2-r}\gm\qquad(\gm\in
\Gm)\,. \ead
So this defines a homogeneous $1$-cocycle, called an \emph{Eichler
cocycle}. It has values in the holomorphic functions on $\lhp$ with the
action $|^-_{v,2-r}$. The corresponding inhomogeneous $1$-cocycle is
$\gm \mapsto c_f(\gm^{-1}z_0,z_0)$; it depends on the choice of a base
point $z_0\in \uhp$. See for instance \cite[\S6.1]{BLZ} for a
discussion of cohomology based on homogeneous cocycles.

The values of Eichler cocycles are holomorphic functions on $\lhp$
satisfying further properties. They are in the $\Gm$-module
$\dsv{v,2-r}\om$ of holomorphic functions $h$ on $\lhp$ that have a
holomorphic continuation to a neighborhood of $\RR$ in $\CC$ and for
which $t\mapsto (i-t)^{2-r}\, h(t)$ is holomorphic in $\frac{-1}t$ on a
neighborhood of $\frac {-1}t$ in~$\CC$. The association $f\mapsto c_f$
induces a linear map
 \be \label{AH1} A_r(v) \longrightarrow H^1(\Gm;\dsv{v,2-r}\om)\ee
which is injective if $r\in \RR \setminus \ZZ_{\geq 2}$ \cite[Theorem
A]{BCD}.

For a cusp form $f\in \cf r(v)$ we can form
$c_f(z_1,z_2;t)= \int_{z=z_1}^{z_2} \, f(z)\, (z-\nobreak t)^{r-2}\,  dz,$
where $z_1$ and $z_2$ may be cusps. These cocycles take values in the
larger module $\dsv{v,2-r}\infty$ of holomorphic functions $h$ on
$\lhp$ for which $h$ has an extension to $C^\infty(\lhp\cup\RR)$ and
for which $t\mapsto  (i-t)^{2-r}\,h(t)$ is $C^\infty$ in $\frac{-1}t$
on a neighborhood of $0$ in $\lhp \cup\RR$. The association
$f \mapsto c_f$ induces an injective linear map
\be \label{SH1p}\cf r(v) \longrightarrow
H^1(\Gm;\dsv{v,2-r}\infty)\,.\ee
This map is bijective if $r\in \RR \setminus \ZZ_{\geq2}$, by Theorem~B
in \cite{BCD}. For $r\in \ZZ_{\geq 2}$ the cocycles take values in the
$(r-2)$-dimensional submodule $\dsv{v,2-r}\pol \subset \dsv{v,2-r}\om$
of polynomial functions on $\CC$ of degree at most $r-2$.

\rmrk{Knopp cocycles} The conjugate linear map $\iota$ given by
$\iota f (z) = \overline{f(\bar z)}$ interchanges functions on $\uhp$
and $\lhp$ and intertwines the actions $|_{v,r}$ and $|^-_{v^{-1}, r}$
(for real $r$ and corresponding multiplier system). This map can be used
to define $\udsv{v^{-1},2-r}\om\subset \udsv{v^{-1},2-r}\infty$ in
terms of the modules $\dsv{v,2-r}\om \subset \dsv{v,2-r}\infty$ with
analogous descriptions. If $r\in \ZZ_{\geq 2}$ we have also
$\udsv{v^{-1},2-r}\pol\subset \udsv{v^{-1},2-r}\om$ consisting of the
polynomial functions of degree at most $r-\nobreak2$.

The Knopp cocycle
\be c^K_f(z_1,z_2) \= \iota c_f(z_1,z_2) \ee
induces conjugate linear maps
\bad &A_r(v) \rightarrow H^1(\Gm;\udsv{v^{-1},2-r}\om)\,,\\
&\cf r(v) \rightarrow H^1_p(\Gm;\udsv{v^{-1},2-r}\infty)\,. \ead
Actually, Knopp \cite{Kn74} considers the resulting map with values in
the larger module $\udsv{v^{-1},2-r}{-\infty}$ of holomorphic functions
on $\uhp$ with at most polynomial growth:
\be h(z) \= \oh\bigl( (\im z)^{-B} \bigr) + \oh \bigl(|z|^{\s}\bigr)
\qquad z\in \uhp\, \text{ for some }B>0\text{ and }\s>0\,. \ee
Knopp and Mawi \cite{KM10} showed that
\be \cf r(v) \rightarrow H^1_p(\Gm;\udsv{v^{-1},2-r}{-\infty})\ee
is bijective for all $r\in \RR$.

\section{Cup product of modular cocycles}\label{sect-cp}
We recall the cup product construction in parabolic cohomology. Applied
to two $1$-cocycles with values in possibly different modules it gives
a coinvariant in the tensor product of the two modules.

Applied to pairs of cocycles attached to modular forms this construction
yields coinvariants in a module that may be large and complicated to
investigate. We discuss various choices in which one obtains
coinvariants in simpler modules.

\subsection{Cup product}We base our discussion of the parabolic cup
product on the discussion in \cite[\S19]{BLZ}, where more details can
be found.

\begin{figure}[htp]
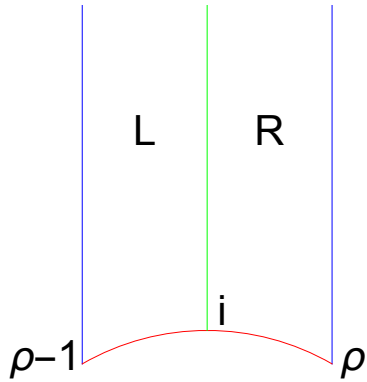

\begin{center}\grf5{cpmc-tessf}\end{center}
\caption{The standard fundamental domain of the modular group $\Gm$ as
the union of two triangles. The tesselation of the upper half-plane
formed by the $\Gm$-translates of these triangle is the basis of the
cup product construction that we use.} \label{fig-tessf}
\end{figure}
We use the tesselation of the upper half plane given by the
$\Gm$-translates of the triangles $L$ and $R$ in
Figure~\ref{fig-tessf}. This tesselation determines a resolution with
which we can compute parabolic cohomology, as discussed in
\cite[\S12.1]{BLZ}. It is based on the spaces $\CC[X_i]$, where $X_0$
is the set of vertices of the tesselation, $X_1$ a set of oriented
edges, and $X_2$ the set of faces.

In \cite[\S19.2]{BLZ} the construction of the cup product is based on a
diagonal approximation. In the context of general cofinite groups a
diagonal approximation as in Table 19.1 of \cite{BLZ} is used. Here we
can proceed   in a simpler way,  
and determine for $i=0,1,2$ the diagonal
approximation
\be \dt_i : \CC[X_i] \rightarrow \sum_{j=0}^i \CC[X_j] \otimes
\CC[X_{i-j}]\ee
as a $\CC[\Gm]$-linear map. It has to satisfy the compatibility
relations $\partial_j \dt_j = \dt_{j-1}\partial_j$ with the boundary
map~$\partial_j$.
In dimension $0$ we determine it by
\be \dt_0 (P) = (P) \otimes (P) \qquad (P\in X_0)\,.\ee
In dimension $1$ we take on the basis edges $e_{\rho,\infty}$,
$e_{\rho-1,i}$ and $e_{i,\infty}$
\be \dt_1 e_{P,Q} \= (P) \otimes e_{P,Q} + e_{P,Q}\otimes Q\,,\ee
use $\dt_1 e_{Q,P}=-\dt_1 e_{P,Q}$, and extend $\dt_1$
$\CC[\Gm]$-linearly. Guided by the diagonal embedding in Table 19.1
in~\cite{BLZ} we take

\badl{dt2def} \dt_2 R &\= (\rho) \otimes R + e_{\rho,i}\otimes
e_{i,\infty }+ R \otimes (\infty)\,,\\
\dt_2 L &\= (\rho-1)\otimes L - e_{\rho-1,i} \otimes e_{i,\infty} + L
\otimes(\infty)\,, \eadl
and extend this $\CC[\Gm]$-linearly. It takes some work to check the
compatibility relation $\partial_2\dt_2=\dt_1\partial_2$ with the
boundary maps.\smallskip

The cup product of cochains $c_1 \in C^p(\CC[X_\cdot];V)$
and $c_2\in C^q(\CC[X_\cdot;W)$ is defined by
\be (c_1\cup c_2)(x) \= - (c_1\otimes c_2)\,\bigl( \dt_{p+q}(x)\bigr)
\qquad\text{for }x\in \CC[X_{p+q}]\,.\ee
The tensor $c_1\otimes c_2$ sees only the component
$\CC[X_p]\otimes \CC[X_q]$ of
\[(\CC[X_\cdot] \otimes \CC[X_\cdot])_{p+q} \= \bigoplus_{j}
\CC[X_j]\otimes \CC[X_{p+q-j}]\,.\]
This defines $c_1\cup c_2\in C^{p+q}(\CC[X_\cdot];V\otimes\nobreak W)$.
If both $c_1$ and $c_2$ are cocycles, then $c_1\cup c_2$ is a cocycle.
If one is a coboundary and the other a cocycle then the cup product is
a coboundary.

In particular for $p=q=1$ we are interested in
\bad
(c_1\cup c_2)&(L+R) \= -(c_1\otimes c_2)\left( e_{\rho,i}\otimes
e_{i,\infty}
- e_{\rho-1,i}\otimes e_{i,\infty}\right)\\
&\=-c_1(e_{\rho,i})\otimes c_2(e_{i,\infty})+c_1(e_{\rho-1,i}) \otimes
c_2(e_{i,\infty})
\\
&\=c_1(e_{\rho-1,\rho}) \otimes c_2(e_{i,\infty})\,. \ead

In principle this works when $c_1$ and $c_2$ are both parabolic
cocycles. However, $c_1$ is evaluated only on the interior edge
$e_{\rho-1,\rho}$, which makes sense for a cocycle that is not
parabolic. (See \S11.1 in \cite{BLZ}.)
Now \cite[equation (19.7)]{BLZ} states that this results in a
well-defined linear map
\be\label{H12cp} \cup : H^1(\Gm;V) \otimes H^1_p(\Gm;W) \rightarrow
H^2_p(\Gm; V \otimes W)\,.\ee
This is not true in the generality in which \cite{BLZ} states it. We
need the condition that the space of invariants $W^\pi$ is zero for all
parabolic $\pi\in \Gm$. For the modular group it suffices that
$W^T=\{0\}$.

\subsection{Application to modular forms}\label{sect-amf}We apply this
to the cocycles attached to two modular forms, $f_1\in A_{r_1}(v_1)$,
$f_2\in \cf{r_2}(v_2)$, with real weights $r_j$ and corresponding
multiplier systems, with $r_2>0$. We take the cocycle $c_1 = c_{f_1}$
representing a cohomology class in $H^1(\Gm;\dsv{v_1,2-r_1}\om)$, and
either the parabolic cocycle $c_2=c_{f_2}$ representing a class in
$H_p^1(\Gm;\dsv{v_2,2-r_2}\infty)$ or $c_2 = c^K(f_2)$ representing a
class in $H^1_p(\Gm;\udsv{v_2^{-1},2-r_2}\infty)$. We need to impose
the condition $r_2\not\in \ZZ_{\geq 2}$ to have $W^T=\{0\}$; see
\cite[Lemma 12.1]{BCD}. If $r_2\in \ZZ_{\geq 2}$ we assume that
$f_1\in \cf{r_1}(v_1)$ as well, to have a well-defined cup product.

In this way the cup product gives coinvariants in the $\Gm$-modules
\[ \dsv{v_1,2-r_1}\om\otimes \dsv{v_2,2-r_2}\infty \text{ and }
\dsv{v_1,2-r_1}\om\otimes \udsv{v_2^{-1},2-r_2}\infty\,,\]
or if $r_2\in \ZZ_{\geq 2}$ in
\[ \dsv{v_1,2-r_1}\om\otimes \dsv{v_2,2-r_2}\pol \text{ and }
\dsv{v_1,2-r_1}\om\otimes \udsv{v_2^{-1},2-r_2}\pol\,.\]
These tensor products are large complicated modules. It is hard to
determine what it means to have a representative of a coinvariant in
such a module. There is hope to get more information if there is an
intertwining operator from the tensor product to a simpler module.

We restrict our attention to a number of situations, in which an
intertwining operator to a simpler module can be found.
\begin{enumerate}
\item[EE.]
$M:\dsv{v_1,2-r_1}\om\otimes \dsv{v_2,2-r_2}\infty \rightarrow
\dsv{v_1v_2,4-r_1-r_2}\infty$ by multiplication of functions, under the
assumption that $r_2\not\in \ZZ_{\geq 2}$. The image of the cup product
$c_{f_1}\cup c_{f_2}$ is a coinvariant in
$\dsv{v_1v_2,4-r_1-r_2}\infty$ represented by the product
\be\label{cpd} \cp (f_1,f_2) \= c_{f_1}(\rho-1,\rho) \cdot
c_{f_2}(i,\infty)\ee
for $f_1\in A_{r_1}(v_1)$ and $f_2\in\cf{r_2}(v_2)$.

\item[cEE.]
$M:\dsv{v_1,2-r_1}\pol\otimes \dsv{v_2,2-r_2}\pol \rightarrow
\dsv{v_1v_2,4-r_1-r_2}\pol$ by multiplication of polynomial functions.
For $f_j \in \cf{r_j}(v_2)$ with $r_1,r_2\in \ZZ_{\geq 2}$ we get the
coinvariant in $\dsv{v_1v_2,4-r_1-r_2}\pol$ represented by the product
$\cp(f_1,f_2)$ in~\eqref{cpd}.

\item[cEK.]
$D\udsv{v^{-1},2-r} \infty \otimes \dsv{v,2-r}\infty  \rightarrow \CC$
by a duality to be discussed in Section~\ref{sect-dual}, for
$r=r_1=r_2>0$, $v=v_1=v_2$. To two cusp forms $f_1,f_2$ in the same
space $\cf r(v)$ is associated a number
$\bigl[ c_{f_2}^K(i, \infty),c_{f_1}(\rho-1,\rho) \bigr]_{2-r}$.
\end{enumerate}

We discuss case cEE in Sections \ref{sect-clp}, and show in
Section~\ref{sect-Hab} how case cEK leads to a generalization of the
relation between cup product and Petersson scalar product.

Case EE is the subject of Sections \ref{sect-EE} and~\ref{sect-tri},
where we use the duality theorem in Section~\ref{sect-dual} to go from
cup product to   a    fourfold and 
  a   triple integral.

\section{Duality theorem}\label{sect-dual}
In this section we formulate the duality theorem, and show that a space
of entire modular forms gives the continuous $\Gm$-invariant linear
forms on $\dsv{ v_1v_2,4-r_1-r_2}\infty$ that we will use for further
study of the cup product.

\subsection{$\Gm$-modules of holomorphic functions}\label{sect-mds}For a
real weight $r$ and a corresponding multiplier system we discussed in
Section~\ref{sect-afcoc} various space of holomorphic functions on
which $\Gm$ acts, related by the conjugate linear transformation
$\iota$ given by $(\iota f)(z) = \overline{f(\bar z)}$.
\bad \xymatrix{ \dsv{v^{-1},r}\om \ar@{<->}[d]^{\iota} \ar@{^{(}->}[r] &
 \dsv{v^{-1},r}\infty \ar@{<->}[d]^{\iota} \ar@{^{(}->}[r] &
\dsv{v^{-1},r}{-\infty} \ar@{<->}[d]^{\iota} \ar@{^{(}->}[r] &
 \dsv{v^{-1},r}{-\om} \ar@{<->}[d]^{\iota}
\\
\udsv{v,r}\om \ar@{^{(}->}[r] & \udsv{v,r}\infty \ar@{^{(}->}[r] &
\udsv{v,r}{-\infty} \ar@{^{(}->}[r] & \udsv{v,r}{-\om} } \ead
The modules in the top row consist of holomorphic functions on $\lhp$,
with the action $|^-_{v^{-1},r}$ of~$\Gm$, the modules in the bottom
row consist of holomorphic function on $\uhp$ with the action
$|_{v,r}$. If $r\in \ZZ_\leq 0$ there are the
$\left(|r|+1\right)$-dimensional submodules
$\dsv{v^{-1},r}\pol \subset \dsv{v^{-1},r}\om$ and
$\udsv{v,r}\pol \subset \udsv{v,r}\om$, also related by $\iota$. These
submodules consist of the polynomial functions of degree at most~$|r|$.

We put
\badl{fpm} f^-(t) &\= (i-t)^r\, f(t)&\quad& \text{for functions $f$ on
$\lhp$}\,,\\
 f^+(z) &\= (-i-z)^r\, f(z)&\quad& \text{for functions $f$ on $\uhp$}\,.
\eadl
With this notation we have the following characterizations:
\be \label{Dd}
\begin{array}{|cc|}\hline
\uldsv{v^{\pm 1},r} {-\om}& f^\pm \text{ is holomorphic on } \uhp^\pm\\
\uldsv{v^{\pm 1},r} {-\infty}&\exists_{B>0}\;f^\pm(z) = \oh\bigl( |\im
z|^{-B}\bigr)
+ \oh\bigl( |z|^B\bigr)\text{ on } \hp^\pm\\
\uldsv{v^{\pm 1},r} {\infty}&\text{extension }f^\pm \in C^\infty\bigl(
\hp^\pm \cup\proj\RR\bigr)\\
\uldsv{v^{\pm 1},r} {\om}&\text{holomorphic extension of $f^\pm$ to }U
\supset \hp^\pm \cup\proj\RR\\
\uldsv{v^{\pm 1},r}\pol&f^\pm\text{ polynomial function of } \frac{t\mp
i}{t\pm i}, \text{ degree at most $|r|$}
\\ \hline
\end{array}\ee

The actions on $\dsv{v,2-r}\infty$ and $\udsv{v^{-2},2-r}\infty$ are
continuous for the topology given by the supremum norms of all
derivatives on $\proj\RR$ of the extension of $f^\pm$ to
$\hp^\pm \cup \proj\RR$. The derivatives are taken with respect to
$\th = -\cot t$ for $t\in \proj\RR$ with $\th\in \RR\bigm/ \pi \ZZ$.

The representation spaces $\udsv{v,r}\om $ and $\dsv{v^{-1},r}\om$ are
the direct limits of spaces of bounded holomorphic functions $f^\pm$ on
neighborhoods $U_1$ of $\proj\RR$ in $\proj\CC$. The natural topology
is obtained by providing these spaces with the supremum norm on~$U_1$.

\subsection{Duality} \label{sect-dual} To formulate the duality theorem
we use linear operators~$\s_r$:
\bad \s_r&: \dsv{v^{-1},r}\om \rightarrow \dsv{v^{-1},r}\om&&\text{ if
}r\in \RR\setminus \ZZ_{\leq 0}\,,\\
\s_r&:\dsv{v^{-1},r}\pol \rightarrow \dsv{v^{-1},r}\om&&\text{ if }r\in
\ZZ_{\leq0}\,. \ead
For $r\in \RR\setminus \ZZ_{\leq0}$ we describe $\s_r$ in terms of the
functions $f^-: z\mapsto f(z) \,(i-\nobreak z)^r$ defined
in~\eqref{fpm}.
\be \label{sigrdef}\s_r f^-(z) \;:=\; \frac1 \pi \frac{z+i}{z-i}\,
\int_{\tau\in C} \, f^-(\tau)\,
\hypg21\Bigl(1,1;r;\frac{(\tau-i)(z+i)}{(\tau+i)(z-i)} \Bigr) \,
 \frac{d\tau}{\tau^2+1}\,,\ee
where the hypergeometric function is given on the unit disk by 
$\hypg21(a,b;c;z):=\sum_{n\geq 0} \frac{(a)_n(b)_n}{(c)_n}\frac{z^n}{n!}, $
and $(a)_n$ is the Pochhammer symbol. 
This function has an analytic extension to $\CC\setminus[1,\infty)$. 
The positively oriented curve $C$ in $\uhp$ encircles the domain of
$f^-$ and~$i$. The point $z$ is outside the curve~$C$. By definition of
$\dsv{v^{-1},r}\om$ the function is holomorphic on
$\proj\CC\setminus K$ for some compact set $K\subset \uhp$. Adapting
$C$ to $K$ we get a holomorphic function $\s_r f^-$ on
$\proj\CC \setminus K$.

If we would insist to formulate the relation $g^- = \s_r f^-$ in terms
of $f$ and $g$ in $\dsv{v^{-1},r}\om$ the formula would be more
complicated:
\[ g(z) \= (z-i)^{-r}\,\frac1 \pi \frac{z+i}{z-i}\, \int_{\tau\in C} \,
f(\tau)\, (i-\tau)^r\,
\hypg21\Bigl(1,1;r;\frac{(\tau-i)(z+i)}{(\tau+i)(z-i)} \Bigr) \,
 \frac{d\tau}{\tau^2+1}\,. \]
To see that this integral makes sense we would need to remember that
$\tau \mapsto f(\tau)\,\allowbreak (i-\nobreak\tau)^r$ extends holomorphically
from $\lhp$ across $\proj\RR$ into an region in~$\uhp$.

For $f_n^-(t) = (\frac{t-i}{t+i}\bigr)^n$, with $n\leq 0$, we will see
in Proposition~\ref{prop-lift} that
\be\label{srw} \s_r f_n^- \= \frac{|n|!}{(r)_{|n|}} \, f_{n-1}^-\,. \ee
If $r\in \ZZ_{\leq 0}$ then $(r)_{|n|}\neq 0$ for $r\leq n \leq 0$. We
use \eqref{srw} to define $\s_r$ for $r\in \ZZ_{\leq 0}$.

At this point the operators $\s_r$ seem arbitrary. In~\S\ref{sect-ps} we
will see   that they   arise naturally.

\begin{thm}\label{thm-dual} {\rm Duality theorem. }Let $r\in \RR $, with
corresponding multiplier system~$v$.
\begin{enumerate}
\item[i)] If $r\not\in \ZZ_{\leq 0}$ there is a non-degenerate
$\Gm$-invariant bilinear form $[\cdot,\cdot]_r$ on
$ \udsv{v,r}{-\om} \times \dsv{v^{-1},r}\om$ given by
\be\label{dualdef} \bigl[ h,f]_r \= \frac1\pi \int_{z\in C} h^+(z)
\,\s_r f^-(z) \, \frac{dz}{(z+i)^2}\,. \ee
The cycle $C$ in $\uhp$ is homotopic to $\proj\RR$ in the domain of
$\s_rf^-$, and encircles the point~$i$ once in the positive direction.

The value of the bilinear form does not change if we add to $\s_r f^-$
any holomorphic function on~$\uhp$.

\item[ii)]Let $r\not\in \ZZ_{\leq 0}$. If $h\in \udsv{v,r}{-\infty}$
then the linear form $f\mapsto [h,f]_r$ on $\dsv{v^{-1},r}\om$ extends
continuously to $\dsv{v^{-1},r}\infty$ for the natural topology on
$\dsv{v^{-1},r}\infty$ defined in \S\ref{sect-mds}.

\item[iii)] If $r\in \ZZ_{\leq 0}$ the relation \eqref{dualdef} defines
a $\Gm$-invariant bilinear form on
$\udsv{v,r}{-\om} \times \dsv{v^{-1},r}\pol$. Its restriction to
$\udsv{v,r}\pol \times \dsv{v^{-1},r}\pol$ is non-degenerate.

\item[iv)] In terms of the expansions
\[h^+(z) \= \sum_{n\geq 0} c_n \,\Bigl( \frac{z-i}{z+i}\Bigr)^n\,,\qquad
 f^-(z) \= \sum_{m\leq 0} d_m \,\Bigl( \frac{z-i}{z+i} \Bigr)^m\,, \]
with $d_m=0$ for $m<|r|$ if $r\in \ZZ_{\leq 0}$ the duality is given by
\be \bigl[ h,f\bigr]_r \= \sum_{n\geq 0} \frac{n!}{(r)_n}\, c_n\,
d_n\,.\ee
\end{enumerate}
\end{thm}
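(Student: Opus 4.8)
The plan is to reduce everything to part (iv), the explicit formula in terms of the coefficients $c_n$ and $d_m$, and then derive parts (i)--(iii) as consequences. First I would establish the key computation behind \eqref{srw}: for the basis functions $f_n^-(t)=\bigl(\tfrac{t-i}{t+i}\bigr)^n$ with $n\leq 0$, compute $\s_r f_n^-$ by substituting into \eqref{sigrdef}, expanding the ${}_2F_1$ as a power series in $\tfrac{(\tau-i)(z+i)}{(\tau+i)(z-i)}$, and evaluating the resulting contour integrals by residues at $\tau=i$. The Pochhammer symbols $(r)_{|n|}$ appear exactly from the $\tfrac{(1)_k(1)_k}{(r)_k}$ coefficients of the hypergeometric series together with the combinatorics of the residue. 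This gives $\s_r f_n^- = \tfrac{|n|!}{(r)_{|n|}} f_{n-1}^-$ for $r\notin\ZZ_{\leq 0}$, and by the same token shows that when $r\in\ZZ_{\leq 0}$ the expression \eqref{srw} still makes sense precisely for $r\leq n\leq 0$, which is how $\s_r$ is defined on $\dsv{v^{-1},r}\pol$ in that case.

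Next I would compute the pairing \eqref{dualdef} on the basis: plug $h^+(z)=\bigl(\tfrac{z-i}{z+i}\bigr)^n$ and $\s_r f^-(z)$ with $f^-=f_m^-$ into $\tfrac1\pi\int_C h^+(z)\,\s_r f^-(z)\,\tfrac{dz}{(z+i)^2}$, use the formula for $\s_r f_m^-$ just obtained, and evaluate the contour integral of $\bigl(\tfrac{z-i}{z+i}\bigr)^n \bigl(\tfrac{z-i}{z+i}\bigr)^{m-1}\tfrac{dz}{(z+i)^2}$ by the substitution $w=\tfrac{z-i}{z+i}$, which sends $C$ to a small loop around $w=0$ and turns the integral into a residue. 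Orthogonality forces $n=-m$, i.e. $n=|m|$, and the normalization works out to $\tfrac{n!}{(r)_n}$. Summing over the expansions in (iv) — legitimate because for $h\in\udsv{v,r}{-\om}$ and $f\in\dsv{v^{-1},r}\om$ the series converge on a common annulus containing $C$ — yields $[h,f]_r=\sum_{n\geq 0}\tfrac{n!}{(r)_n}c_n d_n$, proving (iv). Note that changing $\s_r f^-$ by a function holomorphic on all of $\uhp$ changes it by a series in nonnegative powers of $\tfrac{z-i}{z+i}$ times appropriate factors; since $h^+$ also has only nonnegative powers and we integrate against $\tfrac{dz}{(z+i)^2}$, every such extra term integrates to zero, which is the invariance-under-holomorphic-additions claim.

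From (iv), part (i) is almost immediate: the bilinear form is clearly bilinear and well-defined (the contour-independence follows from Cauchy's theorem since the integrand is holomorphic in the region between two homotopic choices of $C$), and non-degeneracy is visible from the diagonal coefficient formula $\tfrac{n!}{(r)_n}\neq 0$ for $r\notin\ZZ_{\leq 0}$ — given nonzero $f$ with top coefficient $d_{n_0}\neq 0$ one pairs against $h$ with $c_{n_0}=1$. For $\Gm$-invariance I would either check it directly on generators $S,T$ using the transformation behavior of the integrand, or — cleaner — invoke the identification of $\s_r$ and the pairing with principal-series intertwiners promised in \S\ref{sect-ps}, where $\Gm$-invariance is built in; here I would just cite that the pairing is the restriction of a $G$-invariant (hence $\Gm$-invariant) pairing. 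Part (iii) is the $r\in\ZZ_{\leq 0}$ analogue and follows the same residue computation, now with the finite sum $n\leq|r|$, and the restriction to $\udsv{v,r}\pol\times\dsv{v^{-1},r}\pol$ is the finite-dimensional pairing $\sum_{n=0}^{|r|}\tfrac{n!}{(r)_n}c_n d_n$, non-degenerate because none of these finitely many coefficients vanishes.

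The main obstacle is part (ii): extending $f\mapsto[h,f]_r$ continuously from $\dsv{v^{-1},r}\om$ to $\dsv{v^{-1},r}\infty$ when $h\in\udsv{v,r}{-\infty}$. Here $f^-$ is only $C^\infty$ up to $\proj\RR$, not holomorphic across it, so one cannot push the contour $C$ off to $\proj\RR$ and the residue bookkeeping breaks. The plan is to estimate: write $[h,f]_r$ using \eqref{sigrdef} as an iterated integral $\tfrac1{\pi^2}\int_{z\in C}\int_{\tau\in C'}h^+(z)f^-(\tau)\,{}_2F_1(\cdots)\,\tfrac{d\tau}{\tau^2+1}\tfrac{dz}{(z+i)^2}$, deform $C'$ toward $\proj\RR$ so that the $\tau$-integral becomes an integral over $\proj\RR$ against the smooth function $f^-$, control the ${}_2F_1$ kernel near the singular locus (its only singularity is at argument $1$, corresponding to $\tau,z$ on the same side of $\proj\RR$ and $z$ outside $C$, which one keeps separated), and bound the result by finitely many of the supremum-norm seminorms of $f^-$ and its $\th$-derivatives on $\proj\RR$ — the seminorms defining the topology of $\dsv{v^{-1},r}\infty$ in \S\ref{sect-mds}. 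The polynomial-growth hypothesis on $h\in\udsv{v,r}{-\infty}$ is exactly what makes the $z$-integral converge after the $\tau$-contour is dropped to $\proj\RR$. Density of $\dsv{v^{-1},r}\om$ in $\dsv{v^{-1},r}\infty$ then gives the unique continuous extension, and it agrees with (iv) by continuity of both sides in $f$.
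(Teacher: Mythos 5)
Your computational core is sound and in fact overlaps with what the paper does: the paper also evaluates $\s_r$ on the basis $f_n^-$ by inserting the hypergeometric series into \eqref{sigrdef} (Proposition~\ref{prop-lift}), obtains the diagonal pairing $\bigl\langle w^a,w^{-b}\bigr\rangle=\frac{a!}{(r)_a}\,\dt_{a,b}$, and gets non-degeneracy and part~iv) from it; your residue computation in the disk coordinate, the observation that adding a function holomorphic on $\uhp$ contributes nothing, and the treatment of $r\in\ZZ_{\leq 0}$ all match. The organizational difference is that the paper runs the argument inside principal series representations of the universal covering group (Theorem~\ref{thm-dual-ps}): the form is defined as $[\al,\ph]_r=\langle\ell\al,\s_r\ph\rangle$ with $\langle\cdot,\cdot\rangle$ the $\tG$-invariant hyperfunction/analytic-vector duality, $\ell$ multiplication by $w$, and $\s_r$ a section of the intertwining operator $J_r$; Theorem~\ref{thm-dual} then follows via Proposition~\ref{prop-ident}.

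The genuine gap in your plan is the $\Gm$-invariance in parts i) and iii). Your first option, checking invariance on $S,T$ \emph{directly from the transformation behavior of the integrand} in \eqref{dualdef}, would fail as stated: $\s_r$ is not equivariant, so the integrand has no clean transformation law. Your second option is inaccurate as phrased: the form is \emph{not} the restriction of a $G$-invariant pairing, because it is manufactured from the invariant duality through the non-equivariant section $\s_r$. What is actually needed is the intermediate statement (the paper's \eqref{trl-lift}, a consequence of $\s_r$ being a lift of the intertwiner $J_r$) that the equivariance defect $(\s_r f^-)|^\prs_{1-r/2,2-r}\tilde\gm-\s_r\bigl(f^-|^\prs_{r/2,-r}\tilde\gm\bigr)$ lies in the kernel of $J_r$, i.e.\ is holomorphic on $\uhp$; combined with your (proved) fact that such additions pair to zero against $h^+$, and with the invariance of the underlying duality, this gives $\Gm$-invariance. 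Without that statement the invariance is simply not established. A smaller point: for part ii) your contour-deformation and $\hypg21$-kernel estimates are far heavier than needed and left unproved (where the contours sit once $f^-$ no longer extends into $\uhp$, uniformity near the singular argument $1$), while your own part iv) already settles ii): for $h\in\udsv{v,r}{-\infty}$ one has $c_n=\oh\bigl((1+n)^B\bigr)$, the topology of $\dsv{v^{-1},r}\infty$ is given by the seminorms $\sup_m|d_m|\,(1+|m|)^A$, and $n!/(r)_n=\oh\bigl(n^{1-r}\bigr)$, so the coefficient series converges and is continuous — which is exactly the paper's argument.
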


We will give the proof of the duality theorem in \S\ref{sect-dualps}, in
the context of principal series representations of the universal
covering group of $\SL_2(\RR)$.

\rmrk{Use of the duality theorem}At the end of Subsection~\ref{sect-amf}
we mentioned three cases in which we want to further investigate the
cup product of modular cocycles. In two of them we will use the duality
theorem.

In case cEK we associate to $f_1,f_2\in \cf r(v)$ the cup product
$c_{f_1}(\rho-1,\rho) \otimes c_{f_2}(i,\infty)$  in
$\dsv{v,2-r}\infty \otimes \udsv{v^{-1},2-r}\infty$.  We get a
$\Gm$-equivariant linear map from the tensor product to $\CC$ generated
by $v\otimes w\mapsto [w, v]_{2-r}$ for $r\not\in \ZZ_{\geq 2}$. If
$r\in \ZZ_{\geq 2}$ we use part iii) of the duality theorem. See
Section~\ref{sect-Hab}.

In the case EE we have
$c_{f_1}(\rho-1,\rho) \otimes c_{f_2}(i,\infty) \in
\dsv{v_1,2-r_1}\om \otimes \dsv{v_2,2-r_2}\infty$ representing the cup
product $c_{f_1}\cup c_{f_2}$. The linear map induced by multiplication
$v\otimes w\mapsto v\, w$ is an intertwining operator from the tensor
product to $\dsv{v_1v_2,4-r_1-r_2}\infty$. If we have $\Gm$-invariant
elements $u\in\udsv{v_1^{-1}v_2^{-1},4-r_1-r_2}{-\infty}$, we use the
linear form $\ph \mapsto \bigl[ u,\ph\bigr]_{2-r}$ resulting from
part~iii)
of the duality theorem to test whether the coinvariant represented by
$\cp(f_1,f_2)$ is non-trivial. See section~\ref{sect-EE}.

\section{Cup product and Petersson scalar product}\label{sect-Hab}

We consider now case EK in Subsection~\ref{sect-amf}. After
Theorem~\ref{thm-dual} we explained that to two cusp forms
$f_1,f_2\in \cf r(v)$ we associate the number
\[ \bigl[c_{f_2}^K(i,\infty),c_{f_1}(\rho-1,\rho)] _{2-r}\]
 in the trivial $\Gm$-module $\CC \cong \CC_\Gm$. 
 This number   turns out to be   a
 multiple of the Petersson scalar product. We extend Haberland's
relation in \cite{Ha} to real weights. 
\begin{thm}\label{thm-Hab}Let $r>0$ and let $v$ be a corresponding
multiplier system. For all $f_1,f_2\in \cf r(v)$
\[\bigl[c_{f_2}^K(i,\infty),c_{f_1}(\rho-1,\rho)] _{2-r} \= -2i \;\bigl(
f_1,f_2)_r \;:=\; -2i \int_{\Gm\backslash\uhp}
f_1(\tau)\,\overline{f_2(\tau)}\,y^r\, \frac{dx\, dy}{y^2} \,. \]
\end{thm}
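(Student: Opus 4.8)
The plan is to compute the pairing $\bigl[c_{f_2}^K(i,\infty),c_{f_1}(\rho-1,\rho)\bigr]_{2-r}$ by recognizing that it represents a coinvariant in $\CC\cong H^2_p(\Gm;\CC)$ via the cup product $[c_{f_1}]\cup[c^K_{f_2}]$ composed with the duality map $D$, and then evaluating that cohomological quantity directly as an integral over a fundamental domain. First I would use the cup product formula from Section~\ref{sect-cp}: the class $[c_{f_1}]\cup[c^K_{f_2}]$ evaluated on $L+R$ gives exactly $c_{f_1}(e_{\rho-1,\rho})\otimes c^K_{f_2}(e_{i,\infty})$, and applying the duality pairing $[\cdot,\cdot]_{2-r}$ turns this tensor into the scalar on the left-hand side. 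Because this scalar is the image of a parabolic $2$-cocycle under the canonical identification $H^2_p(\Gm;\CC)\cong\CC$, it is independent of the representatives chosen; in particular, I can replace the cocycle $(z_1,z_2)\mapsto c_{f_1}(z_1,z_2)$ by one with a movable base point and likewise for $c^K_{f_2}$, and I may sum the cup product over a set of translates of $L\cup R$ tiling a fundamental domain.

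The core of the argument is then a Stokes/Green-type computation. I would write the integrand of $[c^K_{f_2}(i,\infty),c_{f_1}(\rho-1,\rho)]_{2-r}$ using the explicit form of the bilinear form $[\cdot,\cdot]_{2-r}$ from \eqref{dualdef}, unfold the cup product over the tessellation, and re-assemble the sum of line integrals $\int c_{f_1}\, d c^K_{f_2} - \int c^K_{f_2}\, d c_{f_1}$ (appropriately interpreted) into a single integral over $\partial(\Gm\backslash\uhp)$ of a closed $1$-form built from $f_1$, $\overline{f_2}$ and the kernels $(z-t)^{r-2}$, $(\bar z - w)^{r-2}$. The $\Gm$-invariance of $f_1(\tau)\overline{f_2(\tau)}y^r\,\frac{d\bar\tau\wedge d\tau}{y^2}$ is what makes the boundary terms telescope, and an application of Stokes' theorem converts the boundary integral into $\int_{\Gm\backslash\uhp} f_1(\tau)\overline{f_2(\tau)}\, y^r\, d\mu$, with the constant $-2i$ emerging from $d\bar\tau\wedge d\tau = -2i\, dx\wedge dy$ and the normalization in \eqref{dualdef}. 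Concretely I would differentiate: $\partial_{\bar\tau}\bigl(\,\overline{f_2(\tau)}\,(\overline\tau - w)^{r-1}/(r-1)\,\bigr)$ recovers $\overline{f_2(\tau)}\,(\overline\tau-w)^{r-2}$, so the double integral defining the cocycle values collapses after integrating against $f_1(\tau)(\tau-t)^{r-2}$ and then against the duality kernel in the remaining variable.

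The main obstacle I expect is the careful bookkeeping of the contour and branch conventions: the Eichler kernel uses $\arg(z-t)\in(-\pi/2,3\pi/2)$ with $t\in\lhp$, the Knopp kernel involves $(\overline\tau-w)^{r-2}$ with $w\in\uhp$, and the duality form \eqref{dualdef} requires applying $\s_r$ and integrating over a cycle $C$ encircling $i$; I must check that as the tessellation is unfolded these contours can be deformed consistently so that the telescoping across edges shared by adjacent tiles is exact, with no spurious monodromy contribution. A secondary technical point is convergence at the cusp: since $f_1$ need only lie in $\cf r(v)$ and $f_2\in\cf r(v)$, both have exponential decay at $\infty$, so the integrals over the two vertical sides of the standard fundamental domain cancel and the integral over the arc near $\rho,\rho-1$ converges; I would note this explicitly but it is routine. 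Once the contour manipulations are pinned down, the identity $d\bar\tau\wedge d\tau=-2i\,dx\wedge dy$ and the definition $(f_1,f_2)_r=\int_{\Gm\backslash\uhp}f_1\overline{f_2}\,y^r\,\frac{dx\,dy}{y^2}$ give the stated constant $-2i$, completing the proof.
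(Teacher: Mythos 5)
Your overall shape---reduce the bracket to a double integral of $f_1(\tau_1)\,\overline{f_2(\tau_2)}$ against the kernel $(\tau_1-\bar\tau_2)^{r-2}$ and then run a Haberland-style Stokes argument with the $\Gm$-invariant $2$-form---is indeed the route the paper takes, but as written your proposal has genuine gaps at exactly the points where the real-weight case differs from the classical one. The step where you say the double integral ``collapses after integrating against the duality kernel in the remaining variable'' is the heart of the proof and is not carried out. For real $r$ the form $[\cdot,\cdot]_{2-r}$ is not the finite-dimensional pairing of period polynomials: by \eqref{dualdef} it involves the operator $\s_{2-r}$ (a hypergeometric integral transform) followed by a contour integral around $i$. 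What has to be proved is the identity $\bigl[(\bar\tau_2-\cdot)^{r-2},\,(\tau_1-\cdot)^{r-2}\bigr]_{2-r}=(2i)^{2-r}\,(\tau_1-\bar\tau_2)^{r-2}$, which the paper obtains by expanding both kernels in the disk coordinate $w=\frac{z-i}{z+i}$, using $\s_{2-r}f_n^-=\frac{|n|!}{(2-r)_{|n|}}f_{n-1}^-$ and part~iv) of Theorem~\ref{thm-dual}. Without this computation there is no concrete closed $1$-form to which your Green/Stokes reassembly can be applied, so the argument does not get off the ground. Relatedly, since $c^K_{f_2}(i,\infty)$ involves an integral up to the cusp, rearranging the three integrals requires the truncation $c^K_{f_2}(i,ia)$ together with the continuity statements in parts~ii), iv) of Theorem~\ref{thm-dual} (this is Lemma~\ref{lem-a} and the lemma following it); you treat this as routine, but it is where the topology on $\dsv{v,2-r}\infty$ is actually used.

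Second, your displayed antiderivative is false: $\overline{f_2(\tau)}$ is anti-holomorphic, so $\partial_{\bar\tau}\bigl(\overline{f_2(\tau)}\,(\bar\tau-w)^{r-1}/(r-1)\bigr)$ contains the extra term $\overline{f_2'(\tau)}\,(\bar\tau-w)^{r-1}/(r-1)$ and does not recover $\overline{f_2(\tau)}\,(\bar\tau-w)^{r-2}$. The correct device is the non-holomorphic period integral $F_2(\tau)=\int_{\tau_2=\tau}^{\infty}\overline{f_2(\tau_2)}\,(\tau-\bar\tau_2)^{r-2}\,d\bar\tau_2$, whose $\bar\tau$-derivative comes from the moving endpoint and gives \eqref{F2harm}; moreover its cocycle transformation \eqref{F2trf} is what actually makes the two vertical sides of the fundamental domain cancel (the exponential decay of the cusp forms only ensures convergence, not the cancellation). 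With $F_2$ in place, $d\bigl(f_1\,F_2\,d\tau\bigr)$ produces the Petersson integrand and the constant $-2i$, as in the paper; without it, the telescoping you describe has no exact primitive behind it.
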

\rmrk{Remarks}Haberland \cite[\S7]{Ha} gave this relation for even
positive weights. With the duality theorem Theorem~\ref{thm-dual}
available we can essentially follow Haberland's approach. As far as we
know the case of positive weights has not been considered in
generality. We note that Neururer's paper \cite{Neu}, Sections 2 and~3,
and Cohen's paper \cite{Coh18}, Section~3, come close to what we do
here.

\begin{proof}We start with the cup product and show that it is a
multiple of the Petersson scalar product.

\rmrk{Truncation} The function $c_{f_2}^K$ is in $\udsv{v^{-1},2-r}\om$,
since the integration is over a compact path in $\uhp$. However
$c_{f_1}\in \dsv{v,2-r}\infty$. We use as an approximation the element
$c_{f_1}(i,ia)\in \dsv{v,2-r}\om$.
\begin{lem}\label{lem-a}Let $f\in \cf r(v)$. Then
\[\lim_{a\uparrow\infty} c_{f}(i,ia) \= c_{f}(i,\infty)\]
in the natural topology on $\dsv{v,2-r}\infty$.
\end{lem}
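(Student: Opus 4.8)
The plan is to reduce the statement to a uniform estimate on the tail of the defining integral \eqref{Eidef}, and then to check that all the seminorms defining the topology on $\dsv{v,2-r}\infty$ tend to $0$. From the additivity $c_f(z_1,z_3)=c_f(z_1,z_2)+c_f(z_2,z_3)$ one has $c_f(i,ia)=c_f(i,\infty)-c_f(ia,\infty)$, so it suffices to prove that $c_f(ia,\infty)\to 0$ in $\dsv{v,2-r}\infty$ as $a\uparrow\infty$. Parametrizing the path from $ia$ to $i\infty$ along the imaginary axis by $z=iy$ gives
\[ c_f(ia,\infty;t) \= i\int_{y=a}^\infty f(iy)\,(iy-t)^{r-2}\, dy\,,\qquad t\in\lhp\,. \]
Recall from \S\ref{sect-mds} that the topology on $\dsv{v,2-r}\infty$ is defined by the seminorms $h\mapsto\sup_\th\bigl|\partial_\th^k h^-\bigr|$ for $k\geq 0$, where $h^-(t)=(i-t)^{2-r}h(t)$ and $t=-\cot\th$ with $\th\in\RR/\pi\ZZ$; the point $\th=0$ corresponds to $t=\infty$, near which $w=\tan\th=-1/t$ is a smooth local coordinate vanishing at $\th=0$. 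Thus I must show that for every $k$ the functions $\partial_\th^k\bigl((i-t)^{2-r}c_f(ia,\infty;t)\bigr)$ tend to $0$ uniformly on the circle $\RR/\pi\ZZ$.

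The case $k=0$ is a direct uniform bound. For $y\geq 1$ and $t\in\lhp\cup\RR$ (so $\im t\leq 0$) one checks that $|iy-t|\geq\max\bigl(y,|i-t|\bigr)$ and $|iy-t|\leq(y-1)+|i-t|\leq y\,|i-t|$, using $|i-t|\geq 1$; hence $\bigl|(i-t)^{2-r}(iy-t)^{r-2}\bigr|=(|iy-t|/|i-t|)^{r-2}\leq y^{|r-2|}$ uniformly. Consequently
\[ \bigl|(i-t)^{2-r}c_f(ia,\infty;t)\bigr| \;\leq\; \int_a^\infty|f(iy)|\,y^{|r-2|}\,dy\,, \]
and the quick decay of the cusp form $f$ (i.e.\ $f(iy)=\oh(y^{-N})$ for every $N$, in fact exponential) makes the right-hand side the tail of a convergent integral, so it tends to $0$ as $a\uparrow\infty$, uniformly in $t$.

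For $k\geq 1$ I would differentiate under the integral sign, in two charts covering $\RR/\pi\ZZ$. Away from $\th=0$, i.e.\ for $t$ ranging over a compact subset $K$ of $\lhp\cup\RR$, write $\partial_\th=(1+t^2)\partial_t$; each $\partial_t$ applied to $(iy-t)^{r-2}$ lowers the exponent by one, and for $y\geq a$ large one has $|iy-t|\asymp y$ uniformly on $K$, while $(i-t)^{2-r-j}$ is bounded on $K$. By the Leibniz rule the $\th$-derivatives are then bounded by integrals $\int_a^\infty|f(iy)|\,P(y)\,dy$ with $P$ a polynomial, which again tend to $0$ by quick decay. Near $\th=0$ I would pass to the coordinate $w=-1/t$: extracting $-t$ from $i-t$ and from $iy-t$ and observing that the resulting factors $(-t)^{2-r}$ and $(-t)^{r-2}$ cancel (with a branch choice consistent with the convention $\arg(z-t)\in(-\pi/2,3\pi/2)$), one finds
\[ (i-t)^{2-r}c_f(ia,\infty;t) \= (1+iw)^{2-r}\;i\int_a^\infty f(iy)\,(1+iyw)^{r-2}\,dy\,, \]
which is manifestly $C^\infty$ in $w$ near $0$; differentiating in $w$ again pulls out powers of $y$ against the quick decay of $f$, so every $\partial_w^j$ — hence, via $\partial_\th=(1+w^2)\partial_w$, every $\partial_\th^k$ — of the left side tends to $0$ uniformly near $\th=0$.

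The one delicate point is the behaviour at the cusp $t=\infty$, i.e.\ at $\th=0$: one must not merely know smoothness in $w=-1/t$ but control all $w$-derivatives uniformly in $a$, which hinges on the cancellation of the $(-t)$-powers and hence on careful bookkeeping of the argument conventions. Everything else is a routine dominated-convergence argument resting on the quick decay of the cusp form.
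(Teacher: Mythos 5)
Your proof is correct and takes essentially the same route as the paper's: the difference $c_f(i,\infty)-c_f(i,ia)$ is the tail integral over $[a,\infty)$, whose kernel $\bigl(\frac{iy-t}{i-t}\bigr)^{r-2}$ is bounded by a power of $y$ uniformly in $t\in\proj\RR$, so the exponential decay of the cusp form gives convergence in every seminorm, derivatives being handled by the same dominated-convergence argument. Your explicit two-chart analysis near $t=\infty$ (coordinate $w=-1/t$) just spells out what the paper's proof summarizes by saying the derivatives involve only powers of $\frac1{1+t^2}$, $t$ and $\frac{iy-t}{i-t}$ and can be treated analogously.
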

\begin{proof}The topology on $\dsv{v,2-r}\infty$ is given by the norms
associating to $\ph$ on $\proj\RR$ the supremum on $\proj\RR$ of the
derivatives $\Bigl( \frac1{1+t^2}\partial_t \Bigr) ^N \ph^-(t)$.
We put $\ph(t) = c_{\!f}(i,\infty;t)$.
\begin{align*} \ph^-_a(t) \= i \int_{y=1}^a f(iy)
\,\Bigl(\frac{iy-t}{i-t} \Bigr)^{r_2-2}\, dy \= i \int_{y=1}^a f(iy)
\, \Bigl( \frac{iy/t-1}{i/t-1}\Bigr)^{r_2-2} \, dy\,,\\
\ph^-(t) \= i \int_{y=1}^\infty f(iy) \,\Bigl(\frac{iy-t}{i-t}
\Bigr)^{r_2-2}\, dy \= i \int_{y=1}^\infty f(iy) \, \Bigl(
\frac{iy/t-1}{i/t-1}\Bigr)^{r_2-2} \, dy\,.
\end{align*}
The integrand has the exponentially decreasing factor $f(iy)$ and
another factor that is $O(y^{r_2-2})$, uniform in $t\in \proj\RR$. So
the integral $\ph_a^-$ converges to $\ph^-$ in the supremum norm
on~$\proj\RR$.

Applying the differential operator $\frac1{1+t^2} \partial_t$ a number
of times gives an integral with a more complicated expression. We do
not need to determine these derivatives explicitly. We note that they
involve powers of $\frac1{1+t^2}$, $t$, and $\frac{iy-t}{i-t} $, which
allows us to handle the derivatives of $\ph_a^-$ and $\ph^-$ in an
analogous way. Thus we conclude that $\ph_a^- \rightarrow \ph^-$ as
$a\uparrow \infty$ in the topology of $\dsv{v_2,2-r_2}\infty$.
\end{proof}

\begin{lem}For $f_1,f_2$ as in the theorem
\be\label{limf} \bigl[ c_{f_2}^K(i,\infty), c_{f_1}(\rho-1,\rho)
\bigr]_{2-r} \= \lim_{a\uparrow\infty} \bigl[ c_{f_2}^K(i,ia),
c_{f_1}(\rho-1,\rho)
\bigr]_{2-r}\,.\ee
\end{lem}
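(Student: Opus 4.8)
The plan is to exploit the continuity statement in part~ii) of the duality theorem together with Lemma~\ref{lem-a}. By part~ii), since $c_{f_2}^K(i,\infty)\in\udsv{v^{-1},2-r}{-\infty}$ (indeed it lies in $\udsv{v^{-1},2-r}\infty\subset\udsv{v^{-1},2-r}{-\infty}$, the integration path being compact), the linear form $\varphi\mapsto\bigl[c_{f_2}^K(i,\infty),\varphi\bigr]_{2-r}$ on $\dsv{v,2-r}\om$ extends continuously to $\dsv{v,2-r}\infty$ for the natural topology described in \S\ref{sect-mds}. First I would apply Lemma~\ref{lem-a}, which gives $c_{f_1}(i,ia)\to c_{f_1}(i,\infty)$ in that same topology as $a\uparrow\infty$. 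Combining these two facts immediately yields
\[
\bigl[c_{f_2}^K(i,\infty),c_{f_1}(i,ia)\bigr]_{2-r}\;\longrightarrow\;\bigl[c_{f_2}^K(i,\infty),c_{f_1}(i,\infty)\bigr]_{2-r}\,.
\]

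To obtain the statement as written, the remaining point is the symmetry of the two slots in the pairing. The bilinear form $[\cdot,\cdot]_{2-r}$ as defined in \eqref{dualdef} takes as arguments an element of $\udsv{v,r}{-\om}$ and one of $\dsv{v^{-1},r}\om$; under the conjugate-linear isomorphism $\iota$ relating the two rows of the diagram in \S\ref{sect-mds}, the Knopp cocycle $c_{f_2}^K(i,ia)=\iota\,c_{f_2}(i,ia)$ and the Eichler cocycle $c_{f_1}(\rho-1,\rho)$ are paired exactly as $c_{f_2}^K(i,\infty)$ is paired with $c_{f_1}(i,ia)$, after interchanging the roles of the two forms. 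Concretely, I would note that the displayed limit above is unchanged if we instead let the $\udsv{\cdot}{\infty}$-argument vary: by Lemma~\ref{lem-a} applied to $f_2$ (or by applying $\iota$, which is continuous for the natural topologies), $c_{f_2}^K(i,ia)\to c_{f_2}^K(i,\infty)$ in the natural topology of $\udsv{v^{-1},2-r}\infty$, while $c_{f_1}(\rho-1,\rho)$ is a fixed element of $\udsv{v,2-r}\om\subset\udsv{v,2-r}{-\om}$ (the path $\rho-1\to\rho$ being compact). Hence by part~ii) of the duality theorem, with the roles of $h$ and $f$ exchanged,
\[
\bigl[c_{f_2}^K(i,ia),c_{f_1}(\rho-1,\rho)\bigr]_{2-r}\;\longrightarrow\;\bigl[c_{f_2}^K(i,\infty),c_{f_1}(\rho-1,\rho)\bigr]_{2-r}\,,
\]
which is \eqref{limf}.

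The main obstacle I anticipate is bookkeeping rather than substance: one must be careful that the version of part~ii) of the duality theorem being invoked is the one with the $\udsv{\cdot}{}$-argument fixed in $\udsv{v,2-r}{-\om}$ and the $\dsv{\cdot}{}$-argument varying in $\dsv{v^{-1},2-r}\infty$ — i.e.\ that the Knopp cocycle $c_{f_2}^K(i,ia)$, which sits on the $\uhp$-side, really does play the role of the continuously-varying argument. This is handled by transporting everything through $\iota$: $\iota$ identifies $\udsv{v^{-1},r}{\bullet}$ with $\dsv{v,r}{\bullet}$ isometrically for the natural topologies, so continuity of $f\mapsto[h,f]_{2-r}$ on $\dsv{v,2-r}\infty$ (part~ii) transports to continuity of the pairing in its $\udsv{v^{-1},2-r}\infty$-argument; and $c_{f_2}^K(i,ia)=\iota c_{f_2}(i,ia)\to\iota c_{f_2}(i,\infty)=c_{f_2}^K(i,\infty)$ by Lemma~\ref{lem-a} together with continuity of $\iota$. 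All functions here are holomorphic and the relevant integration paths are compact except at the cusp $\infty$, which is precisely the place where the $\dsv{\cdot}{\infty}$/$\udsv{\cdot}{\infty}$-regularity (as opposed to $\om$-regularity) and the continuity in part~ii) are needed.
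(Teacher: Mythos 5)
Your overall route coincides with the paper's (Lemma~\ref{lem-a} plus continuity of the pairing, transported through $\iota$), but the step that carries all the weight is exactly the one you leave unjustified. In \eqref{limf} the argument that varies is the \emph{first} one, $c^K_{f_2}(i,ia)\in\udsv{v^{-1},2-r}\infty$, whereas part~ii) of Theorem~\ref{thm-dual} asserts continuity only in the \emph{second} slot, $f\mapsto[h,f]_{2-r}$ on $\dsv{v,2-r}\infty$ for fixed $h$; there is no ``part~ii) with the roles of $h$ and $f$ exchanged''. Your repair --- that $\iota$ identifies $\udsv{v^{-1},2-r}\infty$ with $\dsv{v,2-r}\infty$ topologically, so the continuity ``transports'' --- is a non sequitur: the two slots of $[\cdot,\cdot]_{2-r}$ lie in different modules and the form has no built-in symmetry, so a homeomorphism between the modules says nothing about how the value $[h,f]_{2-r}$ behaves when the arguments are interchanged via $\iota$. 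What is needed, and what the paper's proof supplies by citing part~iv) of Theorem~\ref{thm-dual}, is the compatibility of the pairing with this swap: when both arguments are smooth vectors, the coefficient formula $[h,f]_{2-r}=\sum_{n\ge0}\frac{n!}{(2-r)_n}c_nd_n$ shows (the polar coefficients get conjugated and the index reversed under $\iota$, and the factors $n!/(2-r)_n$ are real) that $[\iota f,\iota h]_{2-r}=\overline{[h,f]_{2-r}}$; only with this identity does the continuity of part~ii) yield continuity in the first slot on $\udsv{v^{-1},2-r}\infty\times\dsv{v,2-r}\infty$, after which $c^K_{f_2}(i,ia)\to c^K_{f_2}(i,\infty)$ (Lemma~\ref{lem-a} transported by $\iota$) finishes the proof. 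Alternatively you could bypass the swap altogether and argue directly from \eqref{dualdef}: for the fixed analytic vector $f=c_{f_1}(\rho-1,\rho)\in\dsv{v,2-r}\om$ the contour $C$ may be fixed as a compact subset of $\uhp$, and $c^K_{f_2}(i,ia)\to c^K_{f_2}(i,\infty)$ uniformly on $C$ by the exponential decay of $f_2$; but as written, the key continuity claim is unproved.

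Two smaller points to correct as well: your opening limit $[c^K_{f_2}(i,\infty),c_{f_1}(i,ia)]_{2-r}\to[c^K_{f_2}(i,\infty),c_{f_1}(i,\infty)]_{2-r}$ plays no role in \eqref{limf}, where the second argument $c_{f_1}(\rho-1,\rho)$ is fixed and comes from a compact path; and that element lies in $\dsv{v,2-r}\om$ (functions on $\lhp$), not in $\udsv{v,2-r}\om$ as you write.
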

\begin{proof}Part ii) of Theorem~\ref{thm-dual} gives continuity in the
second argument. As long as both arguments are in $\uldsv{v,2-r}\infty$
the transition to $[\iota f,\iota g]_r$ in the theorem gives continuity
in the first argument as well. To see this we use part~iv) of the
duality theorem.
\end{proof}

\rmrk{Rearranging the integrals}We proceed with
$ \bigl[ c_{f_2}^K(i,ia), c_{f_1}(\rho-1,\rho)
\bigr]_{2-r}$. The three integrals involved in it all run over compact
sets, and we can order them as pleases us. We take
\badl{cKca} \bigl[ c_{f_2}^K&(i,ia), c_{f_1}(\rho-1,\rho) \bigr]_{2-r}\\
&\= \int_{\tau_1=\rho-1}^{\rho}\, f_1(\tau_1) \, \int_{\tau_2=i}^{ia}
\overline{f_2(\tau_2)} \, \bigl[p_{\tau_2} , q_{\tau_1}\bigr]_{2-r} \,
d\tau_1\, d\bar\tau_2\,,\\
p_{\tau_2}(z) &\= \bigl( \bar\tau_2-z\bigr)^{r-2}\,,
\qquad q_{\tau_1}(z) \= \bigl( \tau_1-z\bigr)^{r-2}\,. \eadl

\rmrk{The inner integral}Part i) of the duality theorem
Theorem~\ref{thm-dual} gives
\be\label{d0} \bigl[p_{\tau_2} , q_{\tau_1}\bigr]_{2-r} \= \frac1\pi
\int_{z\in C} p_{\tau_2}^+(z) \, \s_r q_{\tau_1}^-(z)\,
\frac{dz}{(z+i)^2}\,.\ee
The closed contour $C$ in $\uhp$ encircles the path from $\tau_1$
to~$i$. We have with~\eqref{fpm}:
\be p_{\tau_2}^+(z) \= \Bigl( \frac{\bar\tau_2-z}{-i-z} \Bigr)^{r-2}\,,
\qquad q_{\tau_1}^-(z) \= \Bigl( \frac{\tau_1-z}{i-z}\Bigr)^{r-2}\,. \ee

It is convenient to proceed in disk coordinates $w=\frac{z-i}{z+i}$,
$z=i\frac{1+w}{1-w}$, $u_1=\frac{\tau_1-i}{\tau_2-i}$. With
$|u_1|<|w|<1$ we find:
\begin{align}
\nonumber
q_{\tau_1}^-(z) &\= \Bigl( \frac{u_1-w}{-w(1-u_1)} \Bigr)^{r-2} \=
(1-u_1)^{2-r}\,\bigl(1-u_1/w\bigr)^{r-2}\\
&\= (1-u_1)^{2-r} \sum_{m} \binom{r-2}m\, (-1)^m u^m\, w^{-m}\,,\\
\nonumber
\s_{2-r} q_{\tau_1}^-(z) &\= (1-u_1)^{2-r} \sum_{m}\binom{r-2}m\, (-1)^m
u^m\, \frac{m!}{(2-r)_m}\, w^{-m-1}\\
&\= (1-u_1)^{2-r} \sum_m u^m\, w^{-m-1}\,.
\end{align}
We used~\eqref{srw}. If $r\not\in \ZZ_{\geq 2}$ the sum runs over
$m\geq 0$, and over $0\leq m \leq r-2$ if $r\in \ZZ_{\geq 2}$.

We take $u_2=\frac{\tau_2-i}{\tau_2+i}$. Then
$\bar\tau_2 = - i \frac{\bar u_2+1}{\bar u_2-1}$ and
\be p_{\tau_2}^+(z) \= \Bigl( \frac{1-w\bar u_2}{1-\bar u_2}
\Bigr)^{r-2} \=
(1-\bar u_2)^{2-r} \, \sum_n \binom{r-2}n \; (-1)^n \bar u_2^n w^n
\,.\ee
Here $0\leq n \leq r-2$ if $r \in \ZZ_{\geq 2}$, and $n\geq 0$
otherwise. With use of part~iv) of the duality theorem we get
\bad\bigl[p_{\tau_2}& , q_{\tau_1}\bigr]_{2-r} \= (1-\bar
u_2^{-1})^{2-r} \,
(1-\bar u_2)^{2-r} \, \bigl(1-u_1\bar u_2\big)^{r-2}\\
&\= (2i)^{2-r}\,\bigl( \tau_1-\bar\tau_2\bigr)^{r-2}\,. \ead

Thus we find for the quantity in~\eqref{cKca}:
\be \label{cca2}\= (2i)^{2-r}\, \int_{\tau_1=\rho-1}^{\rho}
f_1(\tau_1)\int_{\tau_2=i}^{ia} \overline{f_2(\tau_2)}\, \bigl(
\tau_1-\bar\tau_2\bigr)^{r-2} \, d\bar\tau_2\, d\tau_1\,. \ee

\rmrk{Limit as $a\uparrow\infty$}The exponential decay of cusp forms
shows that the limit of the quantity in~\eqref{cca2} exists. With
\eqref{limf} we get
\bad \bigl[ c^K_{f_2}(i&,\infty), c_{f_1}(\rho-1,\rho) \bigr]_{r-2} \\
&\=(2i)^{2-r}\, \int_{\tau_1=\rho-1}^{\rho}
f_1(\tau_1)\int_{\tau_2=i}^\infty \overline{f_2(\tau_2)}\, \bigl(
\tau_1-\bar\tau_2\bigr)^{r-2} \, d\bar\tau_2\, d\tau_1\\
&\=
(2i)^{2-r} \,\int_{\tau_1=\rho-1}^{\rho}\int_{\tau_2=i}^\infty
\om(\tau_1,\tau_2)\,, \ead
with the differential form on $\uhp\times\uhp$
 \be \om(\tau_1,\tau_2) = f_1(\tau_1) \,
 \overline{f_2(\tau_2)}\,(\tau_1-\bar\tau_2)^{r-2}\, d\bar\tau_2\,
 d\tau_1\,.\ee
 This differential form is invariant for the diagonal action of $\Gm$.
 Hence we have also
 \be \label{dfe} \bigl[ c^K_{f_2}(i,\infty), c_{f_1}(\rho-1,\rho)
\bigr]_{r-2} \= -(2i)^{2-r} \int_{\tau_1=i}^\rho
\int_{\tau_2=0}^{\infty} \om(\tau_1,\tau_2) \,. \ee

\rmrk{Partial integration}We proceed as in \cite[\S7.2]{Ha}. The
function
\be F_2(\tau_1) \= \int_{\tau_2=\tau_1}^\infty \overline{f_2(\tau_2)}\,
(\tau_1-\bar\tau_2)^{r-2}\, d\bar\tau_2\qquad(\tau_1\in \uhp)\,,\ee
is not holomorphic, but satisfies
\be\label{F2harm}
\partial_{\bar\tau_1} F_2(\tau_1) \= - \overline{f_2(\tau_1)} \bigl(
\tau_1-\bar\tau_1)^{r-2} \= -(2i)^{r-2}\, (\im \tau_1)^{r-2} \,
\overline{f_2(\tau_1)}\,, \ee
and has the following transformation behavior under $\gm\in \Gm$:
\be\label{F2trf} F_2|_{v^{-1},2-r} \gm (z) \= F_2(z) + c^K_{f_2}(\infty,
\gm^{-1}\infty)(z)\,. \ee

We use that
\begin{align*}
d \Bigl( f_1(\tau) \, F_2(\tau)\, d\tau \Bigr) &\= - f_1(\tau) \bigl(
\partial_{\bar\tau} F_2(\tau) \bigr) \, d\tau \, d\bar\tau\\
&\= -(2i)^{r-1} \, f_1(x+iy) \, \overline{f_2(x+iy)} \, y^{r-2}\,
dx\,dy\,,
\end{align*}
to compute the Petersson scalar product on the fundamental domain
$\fd_1= R \cup TL$ (not the standard fundamental domain), with $R$ and
$L$ as in Figure~\ref{fig-tessf} on p~\pageref{fig-tessf}. We use
Stoke's theorem to get:
\begin{align*}
\bigl( f_1,f_2)_r&\=-(2i)^{1-r} \int_{\partial\fd_1} f_1(\tau_1) \,
F_2(\tau_1)\, d\tau_1\\
&\= -(2i)^{1-r}\, \biggl( \int_{i+1}^\infty - \int_{i}^\infty +
\int_{i}^\rho - \int_{i+1}^\rho\biggr)\, f_1(\tau_1) \, F_2(\tau_1)\,
d\tau_1 \,.
\end{align*}
The transformation behavior in~\eqref{F2trf} implies that the first two
integrals cancel each other, and that the remaining integrals give
\begin{align*} -(2i)^{1-r}& \int_{\tau_1=i}^\rho f_1(\tau_1)\,
c^K_{f_2}(\infty,0)(\tau_1)\, d\tau_1\\
&\= (2i)^{1-r}\, \int_{\tau_1=i}^\rho \int_{\tau_2=0}^\infty
\om(\tau_1,\tau_2)\,.
\end{align*}
Comparison with~\eqref{dfe} completes the proof of  Theorem~\ref{thm-Hab}.
\end{proof}

\section{Coinvariants of polynomial functions}\label{sect-clp}
The case cEE in the discussion in Subsection~\ref{sect-amf} 
  considers   the
product
\[\cp(f_1,f_2) \= c_{f_1}(\rho-1,\rho) \cdot c_{f_2}(i,\infty)\]
for two cusp forms $f_1\in \cf{r_1}(v_1)$, $f_f\in \cf{r_2}(v_2)$ with
weights $r_j\in \ZZ_{\geq 2}$ and the $v_j$ multiplier systems
corresponding to~$r_j$. It represents a coinvariant in the finite
dimensional module $\dsv{v_1v_2, 4-r_1-r_2}\pol$. If we take trivial
multiplier systems this is the classical situation of the cup product
of two Eichler cohomology classes.

It turns out that in this classical context the cup product of two
Eichler cocycles is uninteresting.

\begin{prop}\label{prop-clci}For a weight $r\in \ZZ_{\geq 2}$ and a
corresponding multiplier system $v$
\be \bigl(\dsv{v,2-r}\pol\bigr)_\Gm \= \begin{cases}
\CC &\text{ if }r=2\text{ and }v=1\,,\\
\{0\}&\text{ otherwise}\,.
\end{cases}\ee
\end{prop}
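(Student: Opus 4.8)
The plan is to compute the coinvariants of the finite-dimensional module $P:=\dsv{v,2-r}\pol$ directly from the two generators $T=\matc1101$ and $S=\matr0{-1}10$ of $\Gm$, using that $P_\Gm = P/\bigl(P|(1-T)+P|(1-S)\bigr)$. Since the module has dimension $r-1$ and the multiplier system $v=v[p]$ is essentially explicit, everything here is a concrete linear-algebra computation; the only mild subtlety is bookkeeping the multiplier system and the argument conventions. Concretely, after transporting $\dsv{v,2-r}\pol$ to the usual model of polynomials of degree at most $r-2$ on $\lhp$ with the weight-$(2-r)$ action, I would work with the monomial basis $\{t^j: 0\le j\le r-2\}$ (or, if that is cleaner, the basis $\bigl(\frac{t-i}{t+i}\bigr)^j$ used elsewhere in the paper), and write down the matrices of $1-T$ and $1-S$ on this basis.

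First I would handle $1-T$. The action of $T$ is $\ph(t)\mapsto v(T)^{-1}\ph(t+1)$ on $\lhp$ (up to the argument convention, which for $T$ is harmless since $c=0$), and for the eta-multiplier one has $v[p](T)=e^{\pi i p/6}$. One of two things happens: either $v(T)\ne 1$, in which case $1-T$ is already surjective on the whole module --- indeed $1-v(T)^{-1}$ times the identity plus a nilpotent (coming from the shift $t\mapsto t+1$) is invertible --- so $P_\Gm=\{0\}$ immediately; or $v(T)=1$ (which for the eta-multiplier forces $p\equiv 0\bmod 12$, hence $r$ even and $v=1$), in which case $P|(1-T)$ is exactly the subspace of polynomials with zero constant term, i.e.\ $P/P|(1-T)$ is one-dimensional, represented by the constant polynomial $1$. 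So the first step already disposes of every case with $v\ne 1$, and reduces the remaining work to the case $v=1$, $r$ even, where we must decide whether the class of the constant function $1$ survives modulo $P|(1-S)$.

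The second step is therefore the analysis of $1-S$ on the quotient line spanned by $[1]$, equivalently: is the constant polynomial $1$ congruent modulo $P|(1-T)$ to an element of $P|(1-S)$? Here $S$ acts by $\ph(t)\mapsto t^{r-2}\ph(-1/t)$ (with $v(S)=v[p](S)=1$ when $v=1$; note $p\equiv0$), so $1|(1-S) = 1 - t^{r-2}$, which has zero constant term, hence lies in $P|(1-T)$ --- this shows $1|(S-1)\in P|(1-T)$, but I need to go the other way and produce, or rule out, an expression $1 \equiv a|(1-S)\bmod P|(1-T)$. The cleanest route is to use the known structure of $H^2_p(\Gm;P)\cong P_\Gm$ from \eqref{H2pci} together with the classical fact that $H^2_p(\Gm;\CC)\cong\CC$ when $r=2$ (so $P=\CC$ is the trivial module and $P_\Gm=\CC$ trivially), while for even $r>2$ one invokes that $\dsv{1,2-r}\pol$ is (dual to) the standard $\mathrm{Sym}^{r-2}$-module, whose $\PSL_2(\ZZ)$-coinvariants vanish: this is a standard computation, e.g.\ via the Euler characteristic / the fact that $\mathrm{Sym}^{r-2}\CC^2$ has no trivial subquotient as a $\PSL_2(\ZZ)$-module in the relevant range, or simply by exhibiting $1 = a|(1-S)+b|(1-T)$ explicitly using that $S^2=1$ and $(ST)^3=1$ in $\PSL_2(\ZZ)$.

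\textbf{Main obstacle.} The genuinely content-bearing step is the last one: showing $\bigl(\dsv{1,2-r}\pol\bigr)_\Gm=\{0\}$ for even $r>2$. Everything before that (the $v\ne1$ cases, the reduction to the constant function) is formal. For the final step I would most likely give the explicit element of $P|(1-S)+P|(1-T)$ equal to $1$, produced by solving the finite linear system: write $b = \sum_j b_j t^j$ so that $b|(1-T) = b(t)-b(t+1)$ kills the constant term and lets one adjust all lower coefficients, and choose $a$ with $a|(1-S) = a(t) - t^{r-2}a(-1/t)$ to fix the constant term to $1$; the relation $(ST)^3=1$ guarantees the resulting cocycle condition is consistent, so such $a,b$ exist. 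Alternatively, and perhaps more cleanly for the write-up, I would cite the classical computation of $H^2_p(\Gm;\mathrm{Sym}^{r-2})$ (which is $0$ for $r>2$ even, $\CC$ for $r=2$), noting that the module $\dsv{1,2-r}\pol$ is isomorphic to $\mathrm{Sym}^{r-2}$ of the standard representation (up to the action conventions fixed in Section~\ref{sect-afcoc}), and that $H^2_p\cong(\cdot)_\Gm$ by \eqref{H2pci}.
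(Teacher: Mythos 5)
Your overall architecture coincides with the paper's (dispose of every case with $v(T)\neq 1$ by invertibility of $1-T$ on the finite-dimensional module, then treat $v=1$), and that first step is correct. The problem lies in your analysis of the case $v=1$. Writing $P=\dsv{1,2-r}\pol$, the operator $1-T$ sends $f(t)$ to $f(t)-f(t+1)$, which \emph{strictly lowers the degree}; a dimension count (its kernel is the constants) then shows its image is exactly the space of polynomials of degree at most $r-3$, i.e.\ those with vanishing \emph{top} coefficient — not those with vanishing constant term. Consequently the one-dimensional quotient $P/P|(1-T)$ is spanned by the class of $t^{r-2}$, not by the constant $1$; in fact $1\in P|(1-T)$ as soon as $r\geq 4$, so producing a decomposition $1=a|(1-S)+b|(1-T)$ — the goal you set yourself — would prove nothing. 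Your auxiliary check is also incorrect as stated: $1|(1-S)=1-t^{r-2}$ has constant term $1$, and having degree $r-2$ it does \emph{not} lie in $P|(1-T)$ for $r>2$.

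Beyond this misidentification, the decisive step — vanishing of the coinvariants for even $r>2$, $v=1$ — is never actually carried out. You either defer it to a ``standard computation'' for $\mathrm{Sym}^{r-2}$ (this route would work: by Zariski density of $\SL_2(\ZZ)$ the module is irreducible and nontrivial for $r>2$, hence has no trivial quotient and its coinvariants vanish, but you would still have to supply this argument together with the identification of $\dsv{1,2-r}\pol$ with that module in the paper's conventions), or you assert that the relations $S^2=1$ and $(ST)^3=1$ ``guarantee the linear system is consistent''. The latter is not an argument: solvability of $t^{r-2}\equiv a|(1-S) \bmod P|(1-T)$ is precisely the assertion to be proved, and no cocycle condition is in play. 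For comparison, the paper finishes in one line once the correct representative is in hand: $t^{r-2}|^-_{1,2-r}(1-S)=t^{r-2}-(-1)^{r-2}$, and since $v=1$ forces $r$ even, the constant $(-1)^{r-2}=1$ lies in $P|(1-T)$ for $r\geq 4$, so the surviving class $[t^{r-2}]$ dies; for $r=2$ the module is the trivial one-dimensional module and the coinvariants are $\CC$.
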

\begin{proof}For integral weights multiplier systems are characters. For
any polynomial $p$ of the form
$p(t) = t^a + \text{lower degree terms}$, we have
\[ p|^-_{v,2-r}(1-T) \= \bigl( 1-v(T) \bigr) \, t^a + \text{lower degree
terms}\,.\]
So if $v(T) \neq 1$ we obtain by induction on the degree that
$ \dsv{v,2-r}\pol|^-_{v,2-r}(1-T)
= \dsv{v,2-r}\pol $.

The value on $T$ determines the multiplier system, hence we are left
with the case $v=1$. For $p$ of degree $a$ as above we have
\[ p|^-_{1,2-r}(1-T) = - a t^{a-1} + \text{lower degree terms}\,.\]
 By induction on $a$ we conclude that
 $t^a \in \dsv{1,2-r}\pol|_{1,2-r}^-(1-T)$ for $a=0,\ldots,r-3$. Finally
 we note that
\begin{align*} t^{r-2}|^-_{1,2-r} (1- S) &\=t^{r-2}- v(S)^{-1}\,
t^{r-2}\,
(-1/t)^{r-2}\\
& \= t^{r-2}-1\cdot (-1)^{r-2}\in t^{r-2} +
\dsv{1,2-r}\pol|^-_{1,2-r}(1-T)\,. \qedhere
\end{align*}
\end{proof}

This implies that in case cEE the polynomial $\cp(f_1,f_2)$ indicated
above represents the trivial coinvariant in $\dsv{v_1,v_2}\pol$ if
$r_1>2 $ or $r_2>2$, and also if $r_1=r_2=2$ and $v_1v_2\neq 1$.

For the modular group the sole multiplier system $v$ for which
$\cf 2(v)\neq\{0\}$ is determined by $v(T)= e^{\pi i/3}$. (The
corresponding space of cusp forms is spanned by $\eta^4$.) This
multiplier system does not satisfy $v^2=1$, and hence it is
understandable that in the classical context the cup product of two
Eichler cocycles of modular   forms   is uninteresting.

\section{Coinvariants associated to two Eichler cocycles}\label{sect-EE}
In the case cEE in the previous section it turned out that for modular
forms
(on the full modular group $\SL_2(\ZZ)$) with integral weight at least
$2$ the cup product leads to the trivial coinvariant. In the case EE in
\S\ref{sect-amf} we consider a modular form $f_1\in A_{r_1}(v_1)$ and
$f_2\in \cf{r_2}(v_2)$ and form the coinvariant in
$\dsv{v_1v_2,4-r_1-r_2}\infty$ represented by the following product of
two values of Eichler cocycles
\[ \cp(f_1,f_2) \= c_{f_1}(\rho-1,\rho) \cdot c_{f_2}(i,\infty)\,.\]
Under the assumption $r_2\not\in \ZZ_{\geq 2}$ this coinvariant
represents the image of the cup product
\[ c_{f_1} \cup c_{f_2} \in H^2_p\bigl( \Gm;
\dsv{v_1,2-r_1}\infty\otimes \dsv{v_2,2-r_2}\infty\bigr)
\;\cong\; \bigl( \dsv{v_1,2-r_1}\infty\otimes
\dsv{v_2,2-r_2}\infty\bigr)_\Gm \]
under the map in cohomology corresponding to the linear map

\[ \dsv{v_1,2-r_1}\infty\otimes \dsv{v_2,2-r_2}\infty \rightarrow
\dsv{v_1v_2,4-r_1-r_2}\infty \]
induced by $v\otimes w \mapsto v\,w$.

To simplify the formulas we use the multiplier system
$v_3=v_1^{-1}v_2^{-1}$ corresponding to the weight $r_3=4-r_1-r_2$.
Since $\Gm$ is generated by $S=\matr0{-1}10$ and $T=\matc1101$, the
space of coinvariants is the quotient of  the 
infinite-dimensional module $\dsv{v_3^{-1},r_3}\infty $ by the
submodule
\[ \dsv{v_3^{-1},r_3}\infty |^-_{v_3^{-1},r_3} (1-S)
+ \dsv{v_3^{-1},r_3}\infty |^-_{v_3^{-1},r_3}(1-T)\,.\]
It is hard to understand this submodule. Even the question whether
$\cp(f_1,f_2)$ represents the trivial coinvariant is hard to answer.

We use the fact that each $\Gm$-invariant linear form $\bt$ on
$\dsv{v_3^{-1},r_3}\infty $ is trivial on the submodule, and induces a
linear form on $\bigl(\dsv{v_3^{-1},r_3}\infty \bigr)_\Gm$. The duality
theorem Theorem~\ref{thm-dual} has the following consequence:

\begin{cor}\label{cor-dual}Let $r_3\geq 0$, and let
$c\in \dsv{v_3^{-1},r_3}\infty$ represent a coinvariant
$[c]\in \bigl(\dsv{v_3^{-1},r_3}\infty\bigr)_\Gm$. If there exists a
entire modular form $h\in M_{r_3}(v_3)$ for which
$\bigl[ h,c\bigr]_{r_3}\neq0$, then the coinvariant $[c]$ is
non-trivial.
\end{cor}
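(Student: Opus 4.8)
The plan is to read off the statement directly from the duality theorem together with the identification $H^2_p(\Gm;V)\cong V_\Gm$ of coinvariants. First I would recall that $M_{r_3}(v_3)$ is a space of entire modular forms of weight $r_3$ and multiplier system $v_3$, so by~\eqref{mf} each $h\in M_{r_3}(v_3)$ is invariant under the action $|_{v_3,r_3}$; applying $\iota$ (which intertwines $|_{v_3,r_3}$ on $\uhp$ with $|^-_{v_3^{-1},r_3}$ on $\lhp$) we see that $\iota h$ lies in $\bigl(\dsv{v_3^{-1},r_3}{-\infty}\bigr)^\Gm$, in fact $h$ itself is naturally a $\Gm$-invariant element of $\udsv{v_3,r_3}{-\infty}$ via the entire-growth condition in the table~\eqref{Dd}. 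So $h$ is precisely an element of $\udsv{v_3,r_3}{-\infty}$ that is fixed by $\Gm$, which is exactly the setting in which part~i) (together with part~ii) for the extension to $\dsv{v_3^{-1},r_3}\infty$) supplies the $\Gm$-invariant bilinear form $[\cdot,\cdot]_{r_3}$ on $\udsv{v_3,r_3}{-\infty}\times\dsv{v_3^{-1},r_3}\infty$.

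The key step is then: since $[\cdot,\cdot]_{r_3}$ is $\Gm$-invariant and $h$ is a $\Gm$-invariant vector, the linear form $\ph\mapsto[h,\ph]_{r_3}$ on $\dsv{v_3^{-1},r_3}\infty$ is $\Gm$-invariant, i.e. $[h,\ph|^-_{v_3^{-1},r_3}\gm]_{r_3}=[h|_{v_3,r_3}\gm^{-1},\ph]_{r_3}=[h,\ph]_{r_3}$ for all $\gm\in\Gm$. Consequently $[h,\cdot]_{r_3}$ annihilates every element of the form $\ph|^-_{v_3^{-1},r_3}(1-\gm)$, hence it annihilates the whole submodule $\sum_{\gm\in\Gm}\dsv{v_3^{-1},r_3}\infty|^-_{v_3^{-1},r_3}(1-\gm)$ whose quotient defines $\bigl(\dsv{v_3^{-1},r_3}\infty\bigr)_\Gm$. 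Therefore $[h,\cdot]_{r_3}$ descends to a well-defined linear functional on the space of coinvariants, and its value on the class $[c]$ is $[h,c]_{r_3}$.

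Finally, if $[c]$ were the trivial coinvariant then $c$ would lie in the submodule $\sum_{\gm\in\Gm}\dsv{v_3^{-1},r_3}\infty|^-_{v_3^{-1},r_3}(1-\gm)$, on which $[h,\cdot]_{r_3}$ vanishes; this would force $[h,c]_{r_3}=0$, contrary to hypothesis. Hence $[c]$ is non-trivial. I do not expect a serious obstacle here: the content is entirely in Theorem~\ref{thm-dual}, and the only point requiring a line of care is the identification of $M_{r_3}(v_3)$ with a subspace of the $\Gm$-invariants of $\udsv{v_3,r_3}{-\infty}$ via the growth characterisation in~\eqref{Dd}, so that part~i)--ii) of the duality theorem genuinely applies. (One should also note $r_3\geq0$ is used precisely to stay in the range $r_3\notin\ZZ_{\leq0}$ except possibly $r_3=0$, which is covered since $\ZZ_{\geq2}\ni r_1,r_2$ is excluded in case EE; for $r_3=0$ the form $[\cdot,\cdot]_0$ can be handled by part~iii).)
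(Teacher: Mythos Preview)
Your argument is correct and is essentially the paper's own proof: identify $h\in M_{r_3}(v_3)$ as a $\Gm$-invariant element of $\udsv{v_3,r_3}{-\infty}$, deduce from the $\Gm$-invariance of $[\cdot,\cdot]_{r_3}$ that $\ph\mapsto[h,\ph]_{r_3}$ descends to the coinvariants, and conclude by contraposition. The paper in fact proves the stronger equality $M_{r_3}(v_3)=\bigl(\udsv{v_3,r_3}{-\infty}\bigr)^\Gm$ (both inclusions, using the Fourier expansion for one direction), but for the corollary only the inclusion you use is needed. Two minor remarks: the detour through $\iota$ in your first paragraph is unnecessary and a little confusing, since $h$ already lives on $\uhp$ with the action $|_{v_3,r_3}$ and so sits directly in $\udsv{v_3,r_3}{-\infty}$; and your parenthetical about $r_3=0$ is slightly garbled (the reference to $r_1,r_2\in\ZZ_{\geq2}$ is not relevant here), though you are right that this boundary case needs part~iii) of the duality theorem rather than parts i)--ii).
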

\begin{proof}The space of entire modular forms $M_{r_3}(v_3)$ is
characterized by $\Gm$-invariance and polynomial growth at the cusps.
Polynomial growth near the boundary implies polynomial growth at the
cusps, so
$\bigl(\udsv{v_3,r_3}{-\infty}\bigr)^\Gm\subset M_{r_3}(v_3)$. With use
of the Fourier expansion one checks that elements of $M_{r_3}(v_3)$
have polynomial growth near $\RR$. So $M_{r_3}(v_3)$ is equal to
$\bigl( \dsv{v_3^{-1},r_3}{-\infty}\bigr)^\Gm$.

Suppose that $c$ represents the trivial coinvariant. Then
$c\in \dsv{v_3^{-1},r_3}\infty|^-_{v_3^{-1},r_3}(1-\nobreak S) + \dsv{v_3^{-1},r_3}\infty|^-_{v_3^{-1},r_3}(1-\nobreak T)$,
and any $\Gm$-invariant linear form $\al$ on
$ \dsv{v_3^{-1},r_3}\infty$ satisfies $\al(c)=0$. In particular this
would mean $[h,c]_{r_3}=0$. Hence $c$ in the corollary represents a
non-trivial coinvariant.
\end{proof}

\rmrk{Remark}We do not know whether
$ \dsv{v^{-1},r}\infty|^-_{v^{-1},r}(1-S) + \dsv{v^{-1},r}\infty|^-_{v^{-1},r}(1-T)$
is closed in $\dsv{v^{-1},r}\infty$ for the natural topology on
$\dsv{v^{-1},r}\infty$. So if $[h,c]_r=0$ for all $h\in M_r(v)$, there
still might be a non-continuous $\Gm$-invariant linear form $\al$ for
which $\al(c)\neq 0$. The condition in the corollary is sufficient but
not necessary for non-triviality of~$[c]$.

\rmrk{Trilinear form}We proceed with $r_1,r_2,r_3\in \RR$ and
corresponding multiplier systems $v_1,v_2,v_3$ satisfying
\be\label{rv-assumpt} r_1+r_2+r_3\=4\, \qquad r_2>0\,,\qquad r_3\geq
0\,\qquad v_1v_2v_3\=1 \ee
and consider the trilinear form
\be\label{T-def}\T(f_1,f_2,f_3)\= \bigl[f_3,\cp(f_1,f_2)\bigr]_{r_3}\ee
on $A_{r_1}(v_1) \times \cf{r_2}(v_2) \times M_{r_3}(v_3)$.

Our aim in the next section is to reformulate $\T(t_1,f_2,f_3)$ more
explicitly (under stronger conditions than \eqref{rv-assumpt}) such
that it can be computed numerically.

The trilinear form $\T$ makes sense without the condition
$r_2\not\in \ZZ_{\geq 2}$. Under this additional condition we know that
if $\T(f_1,f_2,f_3)\neq0$ for some choice of $(f_1,f_2,f_3)$, then
$\cp(f_1,f_2)$ represents a non-trivial coinvariant.

\subsection{Triviality over larger modules}\label{sect-class}
 In this subsection we give ourselves the task to
describe the $2$-cocycle corresponding to $\cp(f_1,f_2)$ as a
$2$-coboundary over some larger $\Gm$-module, or equivalently, to write
\[ \cp(f_1,f_2) \= A_1|(T-1)+A_2|(S-1)\]
with some $A_1$ and $A_2$ in a larger $\Gm$-module. The following result
establishes this for two larger modules.
\begin{prop}\label{prop-triv}
Let $f_1\in A_{r_1}(v_1)$, $f_2\in \cf{r_2}(v_2)$ with $r_1\in\RR$ and
$r_2>0$. As above we use $r_3=4-r_1-r_2$, and multiplier systems
satisfying $v_3=v_1^{-1}v_2^{-1}$.
\begin{enumerate}
\item[i)]There are fairly explicit functions $A_1, A_2$ in the module of
all real-analytic functions on~$\lhp$ with the action
$|^-_{v_3^{-1},r_3}$ such that
\be \cp(f_1,f_2) \= A_1|_{v_3^{-1},r_3}^- (T-1)
+ A_2|_{v_3^{-1},r_3}^- (S-1))\,.\ee
\item[ii)]There are $B_1,B_2\in \dsv{v_3^{-1},r_3}{-\om}$ such that
\be \cp(f_1,f_2) \= B_1|_{v_3^{-1},r_3}^- (T-1)
+ B_2|_{v_3^{-1},r_3}^- (S-1)\,.\ee
\end{enumerate}
\end{prop}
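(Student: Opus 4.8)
The plan is to produce the functions $A_1, A_2$ (resp.\ $B_1, B_2$) explicitly by unwinding the cocycle relation. Recall that $\cp(f_1,f_2)$ is, up to the multiplication intertwiner, the value $(c_{f_1}\cup c_{f_2})(L+R)$ of a parabolic $2$-cocycle. Since $H^2(\Gm;V)=0$ over \emph{any} module, and the obstruction to being a coboundary in $\dsv{v_3^{-1},r_3}\infty$ is precisely the need for $A_1,A_2$ to lie in that specific small module, the natural idea is: first exhibit primitives for the \emph{separate} factors of the cup product in large modules, then combine them. Concretely, write $c_{f_1}(\rho-1,\rho) = c_{f_1}(\rho-1,z_0) + c_{f_1}(z_0,\rho)$ and similarly split $c_{f_2}(i,\infty)$; more usefully, observe that the inhomogeneous cocycle $\gm\mapsto c_{f_1}(\gm^{-1}z_0,z_0)$ and $\gm\mapsto c_{f_2}(\gm^{-1}z_0,z_0)$ each individually becomes a coboundary over the module of \emph{all} holomorphic (resp.\ real-analytic) functions on $\lhp$, because the relevant $H^1$ can be computed and the obstruction disappears once one drops the growth/analyticity-across-$\RR$ conditions. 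So the first step is to pick a convenient primitive $\Psi_1$ for $c_{f_1}$ (an ``Eichler integral''-type antiderivative, e.g.\ $\int_{z_0}^{t}f_1$ extended suitably) and $\Psi_2$ for $c_{f_2}^{\mathrm{harm}}$-type object, living in the large module.

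Second, I would assemble $A_1, A_2$ from these primitives via the standard formula expressing a $2$-coboundary built from a cup product of two $1$-cocycles. In inhomogeneous terms, if $c_i = d\Psi_i$ (so $c_i(\gm) = \Psi_i|(\gm-1)$ over the big module), then a short computation with the cocycle/coboundary identities shows that $c_1\cup c_2$ is the coboundary of the $1$-cochain $\gm\mapsto \Psi_1|\gm \cdot c_2(\gm)$ (or a symmetrized variant), and tracing this through the diagonal approximation $\dt_2$ from \eqref{dt2def} evaluated on $L+R$ gives $\cp(f_1,f_2) = A_1|(T-1) + A_2|(S-1)$ with $A_1, A_2$ written explicitly in terms of $\Psi_1$, the values $c_{f_1}$ and $c_{f_2}$ at the vertices $\rho-1,\rho,i,\infty$, and the group action. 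The point of having \emph{fairly explicit} $A_j$ in part (i) is just that these expressions involve only $f_1$, $f_2$, finitely many path integrals between the standard vertices, and elementary functions $(z-t)^{r-2}$, hence are real-analytic on $\lhp$; no appeal to an abstract vanishing theorem is needed.

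For part (ii) the improvement is to get $B_1, B_2$ in the smaller module $\dsv{v_3^{-1},r_3}{-\om}$ of holomorphic functions on $\lhp$ whose normalization $f^-$ is holomorphic on all of $\proj\CC$ minus a compact set in $\uhp$ (the ``$-\om$'' condition). Here I would choose the primitives more carefully: instead of an arbitrary antiderivative, use one that stays holomorphic and has the right behavior at $\infty$ and across $\RR$ away from a compact obstruction. The cleanest route is to go back to the cup-product construction and note that we only ever evaluate $c_{f_1}$ on the \emph{interior} edge $e_{\rho-1,\rho}$ and $c_{f_2}$ on $e_{i,\infty}$; so the ambiguity is governed by the finitely many relations coming from the faces $L,R$ and their $\Gm$-translates adjacent to these edges. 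One can then solve for $B_1,B_2$ by an explicit finite manipulation — essentially integrating the cocycle relation over the two triangles — and check by inspection of the resulting contour integrals $\int f_j(\tau)(\tau-t)^{r_j-2}d\tau$ that each $B_j$, after the $(i-t)^{r_3}$ twist, extends holomorphically across $\proj\RR$ off a compact set, since the only singularities of such integrals in the $t$-variable lie on the (compact) paths of integration inside $\uhp$.

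The main obstacle will be part (ii): verifying the $-\om$ membership, i.e.\ that $B_j^-$ genuinely has a holomorphic extension to $\proj\CC\setminus K$ with $K\subset\uhp$ compact. This requires tracking precisely which path integrals enter $B_j$ and confirming their $t$-singularities are confined to compact arcs in $\uhp$ (so that nothing sits on or near $\proj\RR$), and also checking the behavior at the cusp $\infty$ — which is where the hypothesis $f_2\in\cf{r_2}(v_2)$ (exponential decay) is essential, since it is what lets the $c_{f_2}(i,\infty)$ integral converge and be controlled at $\infty$. Part (i), by contrast, is routine once the cup-product-coboundary formula is written down, because the real-analytic module imposes essentially no constraints.
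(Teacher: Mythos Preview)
Your approach to part (i) is in the right spirit and close to the paper's, though vaguer. The paper also produces a primitive for the $c_{f_1}$-cocycle in a large module and then reads off $A_1,A_2$ from the cup-product formula (this is Lemma~\ref{lem-pot}). The explicit primitive is
\[
Q_{f_1}(t)\;=\;\int_{\tau=\rho}^{\bar t} f_1(\tau)\,(\tau-t)^{r_1-2}\,d\tau,
\]
with the point being the \emph{antiholomorphic} upper limit $\bar t$; this is what makes $Q_{f_1}$ real-analytic on $\lhp$ while satisfying $Q_{f_1}|^-_{v_1,2-r_1}(\gm-1)=c_{f_1}(\gm^{-1}\rho,\rho)$. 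Your suggestion ``$\int_{z_0}^t f_1$'' does not work as written since $t\in\lhp$ while $f_1$ lives on $\uhp$, but the $\bar t$ trick repairs it. Only a primitive for $c_{f_1}$ is needed, not for $c_{f_2}$.

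Part (ii) has a genuine gap. Your plan is to assemble $B_1,B_2$ from finitely many Eichler integrals $\int f_j(\tau)(\tau-t)^{r_j-2}\,d\tau$ with compact paths in $\uhp$ and then observe that such integrals are holomorphic in $t$ off those paths, hence in $\dsv{v_3^{-1},r_3}{-\om}$. The problem is that the object you need is not a value of the Eichler cocycle but a \emph{primitive} $q$ with $q|^-_{v_1,2-r_1}(\gm-1)=c_{f_1}(\gm^{-1}\rho,\rho)$ for all $\gm$, and whether such a $q$ exists in $\dsv{v_1,2-r_1}{-\om}$ is a cohomological question, not a matter of inspecting singularities of finitely many fixed integrals. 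When $f_1$ is a nonzero cusp form, the Knopp--Mawi theorem says the class of $c_{f_1}$ is already nontrivial in $H^1(\Gm;\dsv{v_1,2-r_1}{-\infty})$, so no primitive of the simple type you describe can exist in $-\infty$; passing to the still larger module $-\om$ does give a primitive, but producing it requires the existence of a $(2-r_1)$-harmonic lift of $f_1$ (Theorem~C of \cite{BCD} combined with \cite{Br14}). These primitives are essentially mock modular forms, and the paper explicitly flags their construction as ``highly non-explicit''. Your contour-integral bookkeeping would not discover them.
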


The proof takes the remainder of this subsection. It depends on various
other results, which may be considered interesting independently. In
the course of the proof the meaning of \emph{fairly explicit} will
become clear.

\begin{lem}\label{lem-pot}Let $V$ be a linear space of functions on the
lower half-plane containing the holomorphic functions and stable under
multiplication by holomorphic functions.

Let $c_1\in Z^1\bigl( \Gm; \dsv{v_2,2-r_2}\infty)$ and
$c_1\in Z^1_p\bigl( \Gm; \dsv{v_2,2-r_2}\infty)$. If there exist
elements $q\in V$ such that
\be \label{q-cond}
q|_{v_2,2-r_2}^-T \= q|_{v_2,2-r_2}^-S\,,\quad\text{ and }\quad
q|_{v_2,2-r_2}^-(S-1) \= c_1(\rho-1,\rho)\,,\ee
then
\badl{genexpr}
 &c_1(\rho-1,\rho)\, c_2(i,\infty)\\
&\quad\=\Bigl( q \, c_2(\rho,\infty\Bigr) \bigm|^-_{v_3^{-1},r_3}(T-1)+
\Bigl(\bigl(q|_{v_1,2-r_1}^- S\bigr)\,
c_2(\rho-1,i)\Bigr)\bigm|^-_{v_3^{-1},r_3}(S-1)\,. \eadl
\end{lem}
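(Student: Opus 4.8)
The plan is to verify the identity \eqref{genexpr} by a direct cocycle computation, exploiting the cocycle relations for $c_1$ and $c_2$ together with the two hypotheses \eqref{q-cond} on $q$. First I would recall that the homogeneous cocycle property of $c_2=c_{f_2}$ gives $c_2(\rho-1,\rho)=c_2(\rho-1,i)+c_2(i,\rho)$ and, after applying $T$ and $S$ appropriately and using $T^{-1}\infty=\infty$, $S^{-1}\infty=0$ and the $\Gm$-equivariance $c_2(\gm^{-1}z_1,\gm^{-1}z_2)=c_2(z_1,z_2)|^-_{v_2,2-r_2}\gm$, one can rewrite the various translates of $c_2$ that appear. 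The point is that $T$ fixes $\infty$ and maps $\rho-1\mapsto\rho$, while $S$ maps $\rho\mapsto\rho-1$ (up to the action on the base of the standard triangles $L,R$), so the edges $e_{\rho-1,\rho}$, $e_{\rho,\infty}$, $e_{i,\infty}$ occurring in the diagonal approximation $\dt_2$ of \eqref{dt2def} are permuted in a controlled way. That is exactly the combinatorics that made $\delta_2$ satisfy $\partial_2\delta_2=\delta_1\partial_2$, so I expect the bookkeeping to close.

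Concretely, I would expand the right-hand side of \eqref{genexpr}. Writing $r_3=4-r_1-r_2$ and using that multiplication of a function in $\dsv{v_1,2-r_1}\cdot$ by one in $\dsv{v_2,2-r_2}\cdot$ lands in $\dsv{v_1v_2,4-r_1-r_2}\cdot=\dsv{v_3^{-1},r_3}\cdot$ and is an intertwining operator, the term $(q\,c_2(\rho,\infty))|^-_{v_3^{-1},r_3}(T-1)$ becomes $q|^-_{v_2,2-r_2}T\cdot c_2(\rho,\infty)|^-_{v_1,2-r_1}T - q\,c_2(\rho,\infty)$, provided I split the weight $r_3$ action on the product according to the factorization of the multiplier system; here I use that $q\in V$ with $V$ stable under multiplication by holomorphic functions, so the product is legitimate even though $q$ need not be holomorphic. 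Similarly the $S$-term expands using $q|^-_{v_1,2-r_1}S$. Then I substitute $q|^-_{v_2,2-r_2}T=q|^-_{v_2,2-r_2}S$ from the first hypothesis in \eqref{q-cond}, and use the second hypothesis $q|^-_{v_2,2-r_2}(S-1)=c_1(\rho-1,\rho)$ to replace the remaining bare occurrences of $q$ and $q|^-S$ by $c_1(\rho-1,\rho)$ plus $q$-terms that telescope. What should survive is precisely $c_1(\rho-1,\rho)\cdot\bigl(c_2(\rho,\infty)-c_2(\rho-1,i)\text{-type combination}\bigr)$, and the cocycle identity for $c_2$ together with the $T,S$-equivariance should collapse that combination to $c_2(i,\infty)$, giving the left-hand side.

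The subtlety — and what I expect to be the main obstacle — is handling the weight/multiplier-system actions on a product of functions carefully. When I write $(\varphi\cdot\psi)|^-_{v_3^{-1},r_3}\gm$ with $\varphi\in\dsv{v_1,2-r_1}$, $\psi\in\dsv{v_2,2-r_2}$, I need the clean Leibniz-type rule $(\varphi\psi)|^-_{v_1v_2,(2-r_1)+(2-r_2)}\gm = (\varphi|^-_{v_1,2-r_1}\gm)(\psi|^-_{v_2,2-r_2}\gm)$. This holds because the automorphy factor $(ct+d)^{-r}$ and the multiplier $v(\gm)$ multiply, but one must check consistency of the argument conventions $\arg(ct+d)\in[-\pi,\pi)$ used on $\lhp$ — i.e. that $(ct+d)^{-(2-r_1)}(ct+d)^{-(2-r_2)}=(ct+d)^{-(4-r_1-r_2)}$ with the chosen branches, which is true since $4-r_1-r_2=r_3$ and the branch is determined by a single choice of $\arg(ct+d)$. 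I would state this multiplicativity as a preliminary observation (it is exactly the compatibility already used to define the intertwining operator $M$ in case EE). Once that is granted, the computation is the kind of finite telescoping sum that the diagonal approximation was designed to produce, and the two conditions in \eqref{q-cond} are precisely what is needed to eliminate the terms that do not cancel automatically; I would organize it as a short sequence of substitutions rather than one big expansion, to keep the branch-of-the-power issues visible at each step.
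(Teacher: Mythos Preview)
Your approach is essentially the same as the paper's: expand the right-hand side of \eqref{genexpr} using the multiplicativity $(\varphi\psi)|^-_{v_3^{-1},r_3}\gm=(\varphi|^-_{v_1,2-r_1}\gm)(\psi|^-_{v_2,2-r_2}\gm)$, apply the cocycle equivariance $c_2(z_1,z_2)|^-\gm=c_2(\gm^{-1}z_1,\gm^{-1}z_2)$ with $T^{-1}\rho=\rho-1$, $T^{-1}\infty=\infty$, $S^{-1}(\rho-1)=\rho$, $S^{-1}i=i$, $q|S^2=q$, then substitute $q|T=q|S$, group to get $(q|S-q)\,c_2(i,\infty)$ via the additivity of $c_2$, and finish with $q|(S-1)=c_1(\rho-1,\rho)$. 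One small slip: in your expansion you swapped the subscripts on $q$ and $c_2$ (it is $q$ that carries $|^-_{v_1,2-r_1}$ and $c_2$ that carries $|^-_{v_2,2-r_2}$; the lemma statement itself has typos here), but this does not affect the argument.
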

\rmrk{Remark}We consider \eqref{genexpr} to be `fairly explicit' in the
data $c_1, c_2$, and $q$.
\begin{proof}By the following computation, starting at the left hand
side in~\eqref{genexpr}. To save space we denote by $|$ the actions
$|_{v_1,2-r_1}$ and $|^-_{v_2,2-r_2}$ on the separate factors
\begin{align*}
&\= \bigl( q|T)\, c_2(\rho-1,\infty) - q\, c_2(\rho,\infty) + q \,
c_2(\rho,i) - (q|S) \, c_2(\rho-1,i)
\displaybreak[0]\\
&\= (q|S)\, \bigl( c_2(\rho-1,\infty)-c_2(\rho-1,i) \bigr) - q \,\bigl(
c_2(\rho,\infty)- q\, c_2(\rho,i) \bigr)\\
&\=\Bigl( q|S-q) \, c_2(i,\infty) \= c_1(\rho-1,\rho)\,
c_2(i,\infty)\,.\qedhere
\end{align*}
\end{proof}
\rmrk{Remark}We found the relation by using the fact the the linear map
in \eqref{H12cp} on cohomology classes. So the cup product
$dp \cup c_2$ should be a coboundary. Evaluating $p \otimes c_2$ on
$\delta_1 \partial\left(L+R\right)$ leads to \eqref{genexpr}.

If we try to work with the cup product $c_1 \cup dp_2$ the function
$p_2$ would have to have values in a module in which there are non
non-trivial invariants for the element $T\in \Gm$. Cocycles of modular
forms do not become trivial in modules satisfying this condition.
\begin{proof}[Proof of part~i) of Proposition~\ref{prop-triv}.] The
space $V^{\mathrm{an}}$ of all real-analytic functions on~$\lhp$
satisfies the condition in Lemma~\ref{lem-pot}. We put for $t\in \lhp$
\be Q_{f_1}(t) \= \int_{\tau=\rho}^{\bar t}\, f_1(\tau)\, \bigl( \tau-t
\bigr)^{r_1-2}\,d\tau\,.\ee
The presence of $\bar t$ as limit of integration makes $Q_{f_1}$
non-holomorphic. It is real analytic. A direct computation shows that
for all $\gm\in \Gm$
\be Q_{f_1} |^-_{v_1,2-r_1}(\gm-1) \= \int_{\tau=\gm^{-1}\rho}^\rho
f_2(\tau) \, (\tau-t)^{r_1-2}\, d\tau\,. \ee
So the group cocycle
$\gm\mapsto \ps_\gm^\rho= c_{f_1}(\gm^{-1}\rho,\rho)$ associated to
$f_1$ with base point $\rho$ becomes a coboundary in the module
$\bigl(  V^{\mathrm{an}}, |^-_{v_2,2-r} \bigr)$. It satisfies
$\ps_{TS}^\rho=0$ since $ST ^{-1}\rho=\rho$. So
$Q_{f_1}|^-_{v_2,2-r_2}TS=Q_{f_1}$. We can take $q= Q_{f_1}$ in
Lemma~\ref{lem-pot}.

The construction of $Q_{f_1}$ may also be considered \emph{fairly
explicit}.
\end{proof}

\begin{prop}\label{prop-X}For all $r_1\in\RR$ and corresponding
multiplier system the space of automorphic functions has a
decomposition
\be A_{r_1}(v_1) \= \cf{r_1}(v_1) \oplus X_r(v_1)\,,\ee
  where $X_{r_1}(v_1)$ is the space of 
$f_1\in A_{r_1}(v_1)$ for which  
there is an element
$q_{f_1}\in \dsv{v_1,2-r_1}{-\infty}$ such that
\be c_{f_1}(\gm^{-1}\rho,\rho) \= q_{f_1}|^-
_{v_1,2-r_1}(\gm-1)\qquad\text{for all }\gm\in \Gm\,.\ee
\end{prop}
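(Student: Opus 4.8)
\textbf{Proof strategy for Proposition~\ref{prop-X}.}

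The plan is to define $X_{r_1}(v_1)$ directly as the stated set, namely the space of $f_1\in A_{r_1}(v_1)$ such that the cocycle $\gm\mapsto c_{f_1}(\gm^{-1}\rho,\rho)$ is a coboundary in the \emph{larger} module $\dsv{v_1,2-r_1}{-\infty}$, and then verify that this is a linear subspace and that $A_{r_1}(v_1) = \cf{r_1}(v_1)\oplus X_{r_1}(v_1)$. Linearity of $X_{r_1}(v_1)$ is immediate from linearity of $f_1\mapsto c_{f_1}$ and of the coboundary condition. For the direct sum decomposition I would argue in two steps: first that $\cf{r_1}(v_1)\cap X_{r_1}(v_1) = \{0\}$, and second that every $f_1\in A_{r_1}(v_1)$ decomposes as a cusp form plus an element of $X_{r_1}(v_1)$.

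For the intersection being trivial: if $f_1$ is a cusp form, its Eichler cocycle already takes values in $\dsv{v_1,2-r_1}\infty$, and by \eqref{SH1p} (Theorem~B of \cite{BCD}) the map $f_1\mapsto [c_{f_1}]$ into $H^1(\Gm;\dsv{v_1,2-r_1}\infty)$ is injective when $r_1\notin\ZZ_{\geq 2}$; so if additionally $c_{f_1}$ is a coboundary in $\dsv{v_1,2-r_1}{-\infty}$, one needs to know that a cusp-form cocycle that is a coboundary in the big module is already a coboundary in $\dsv{v_1,2-r_1}\infty$, forcing $f_1 = 0$. This is exactly the kind of ``the coboundary can be taken in the smaller module'' statement that makes $\dsv{v_1,2-r_1}{-\infty}$ interact well with $\dsv{v_1,2-r_1}\infty$; I would justify it by noting that the potential $q_{f_1}$, being determined up to a $\Gm$-invariant in $\dsv{v_1,2-r_1}{-\infty}$, has $\Gm$-invariants that are precisely entire modular forms of weight $2-r_1$ (polynomial growth at the cusps), which vanish for $r_1 > 2$, and handling small $r_1$ and the integral-weight case separately. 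For the spanning statement: given arbitrary $f_1\in A_{r_1}(v_1)$, the cocycle class $[c_{f_1}]\in H^1(\Gm;\dsv{v_1,2-r_1}\om)$ lies in the image of the injective map \eqref{AH1}; I would use the long exact cohomology sequence (or a direct inspection) attached to $\dsv{v_1,2-r_1}\om\hookrightarrow\dsv{v_1,2-r_1}{-\infty}$ together with the known surjectivity of $\cf{r_1}(v_1)\to H^1(\Gm;\dsv{v_1,2-r_1}\infty)$ to split off a cusp form. Concretely, subtract from $f_1$ the unique cusp form $g$ whose cocycle class matches $[c_{f_1}]$ modulo the image of $H^1_p$; what remains has a cocycle that dies in the passage to $\dsv{v_1,2-r_1}{-\infty}$, i.e.\ lies in $X_{r_1}(v_1)$. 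Here one leans on Knopp--Mawi \cite{KM10}, which via $\iota$ identifies the relevant parabolic cohomology with $-\infty$-coefficients with cusp forms.

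The main obstacle I anticipate is controlling the passage between the modules $\dsv{v_1,2-r_1}\infty$, $\dsv{v_1,2-r_1}\om$ and $\dsv{v_1,2-r_1}{-\infty}$ at the level of cohomology --- in particular showing that the natural maps in cohomology are injective or have the expected kernels, which is where the hypothesis $r_1\notin\ZZ_{\geq 2}$ and the structure of $\Gm$-invariants (entire modular forms of the conjugate weight) really enters. A clean way to package all of this is to phrase the argument via the conjugation $\iota$ and the duality theorem Theorem~\ref{thm-dual}: the potential $q_{f_1}\in\dsv{v_1,2-r_1}{-\infty}$ is, up to the ambiguity of an entire modular form, the image under a suitable transform of the ``period'' data of $f_1$, and the decomposition $A_{r_1}(v_1)=\cf{r_1}(v_1)\oplus X_{r_1}(v_1)$ then mirrors the splitting of the period polynomial/period function into its cuspidal and ``coboundary'' parts. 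I would also remark that uniqueness of $q_{f_1}$ up to $M_{2-r_1}(v_1)$ shows the construction is canonical for $r_1>2$, which is presumably what is needed in the sequel for Proposition~\ref{prop-triv}.
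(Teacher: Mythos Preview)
Your proposal contains the decisive ingredient (the Knopp--Mawi isomorphism), but you deploy it only for the spanning step and then take an unnecessary and incompletely justified detour for the intersection $\cf{r_1}(v_1)\cap X_{r_1}(v_1)=\{0\}$. The paper's argument is much shorter: by definition $X_{r_1}(v_1)$ is the kernel of the linear map
\[
\phi:\; A_{r_1}(v_1)\;\longrightarrow\; H^1\bigl(\Gm;\dsv{v_1,2-r_1}{-\infty}\bigr),\qquad f_1\mapsto \bigl[\gm\mapsto c_{f_1}(\gm^{-1}\rho,\rho)\bigr],
\]
and for a cusp form the cocycles based at $\rho$ and at $\infty$ are cohomologous already in $\dsv{v_1,2-r_1}\infty$, so $\phi|_{\cf{r_1}(v_1)}$ coincides with the Knopp--Mawi map. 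Since Knopp--Mawi is a \emph{bijection} onto $H^1\bigl(\Gm;\dsv{v_1,2-r_1}{-\infty}\bigr)$ for \emph{all} real $r_1$, elementary linear algebra gives $A_{r_1}(v_1)=\cf{r_1}(v_1)\oplus\ker\phi$ directly: injectivity of $\phi|_{\cf{r_1}(v_1)}$ gives the trivial intersection, and surjectivity onto the whole target lets you subtract, from any $f_1$, the unique cusp form with the same image.

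By contrast, your intersection argument tries to pull the coboundary back from $\dsv{v_1,2-r_1}{-\infty}$ to $\dsv{v_1,2-r_1}\infty$ and then invoke Theorem~B of \cite{BCD}. This forces an unneeded case split on $r_1\in\ZZ_{\geq 2}$, and your justification for the pull-back (``invariants in $\dsv{v_1,2-r_1}{-\infty}$ are entire modular forms of weight $2-r_1$'') addresses $H^0$ of the larger module, not the actual obstruction, which lives in $H^0$ of the \emph{quotient} $\dsv{v_1,2-r_1}{-\infty}\big/\dsv{v_1,2-r_1}\infty$ via the long exact sequence. That step is not established. None of this machinery, nor the duality theorem, nor any comparison of the modules $\om$, $\infty$, $-\infty$ at the level of $H^1$, is required: Knopp--Mawi already works at the level of $-\infty$ coefficients and handles both halves of the direct sum simultaneously.
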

\begin{proof}This is a consequence of the theorem of Knopp and Mawi
\cite{KM10} which gives a bijection
\be \cf{r_1}(v_1) \longrightarrow H^1(\Gm; \dsv{v_1,2-r_1}{-\infty}
)\,,\ee
given by assigning to a cusp form $f_1 $ the cohomology class
represented by the group cocycle
$\gm\mapsto \ps^\infty= c_{f_1}(\gm^{-1}\infty,\infty)$. Actually, they
state the result with Knopp cocycles $\iota \ps^\infty$.

The natural maps associated to extension of modules give linear maps
\be\label{AHHH}
A_{r_1}(v_1) \rightarrow H^1(\Gm; \dsv{v_1,2-r_1}\om)
 \rightarrow H^1(\Gm; \dsv{v_1,2-r_1}\infty)
  \rightarrow H^1(\Gm; \dsv{v_1,2-r_1}{-\infty})\ee
  sending $f_1$ to the cohomology class $\bigl[\ps^\rho]$. For cusp
  forms $f_1$ the classes of $\ps^\rho$ and $\ps^\infty$ coincide in the
  last two of the modules in~\eqref{AHHH}. We define $X_{r_1}(v_1)$ as
  the kernel of
 $A_{r_1}(v_1) \rightarrow  H^1(\Gm; \dsv{v_1,2-r_1}{-\infty})$. The
 theorem of Knopp and Mawi implies the statements in the proposition.
\end{proof}
\rmrk{Remark}In principle the construction of $q_{f_1}$ might be traced
by analyzing the proofs in~\cite{Kn74} and~\cite{KM10}. We would not
call the result \emph{fairly explicit}.

\begin{prop}\label{prop-hl}Let $f_1\in A_{r_1}(v_1)$ have at most
exponential growth $\oh\bigl( e^{A\im z}\bigr)
$ with some $A\in R$ at the cusp. Then there are elements
$q\in \dsv{v_1,2-r_1}{-\om}$ such that
\be c_{f_1}(\gm^{-1}\rho,\rho) \= q_{f_1}|^-
_{v_1,2-r_1}(\gm-1)\qquad\text{for all }\gm\in \Gm\,.\ee
\end{prop}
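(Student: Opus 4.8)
The candidate potential is a holomorphic correction of the real‑analytic function $Q_{f_1}$ already used in the proof of part~i) of Proposition~\ref{prop-triv}: recall $Q_{f_1}(t)=\int_{\tau=\rho}^{\bar t}f_1(\tau)(\tau-t)^{r_1-2}\,d\tau$ for $t\in\lhp$, which satisfies $Q_{f_1}|^-_{v_1,2-r_1}(\gm-1)=c_{f_1}(\gm^{-1}\rho,\rho)$ for all $\gm\in\Gm$ but is only real‑analytic. Since $f_1$ is holomorphic the integral depends on its upper endpoint only through the value $\bar t$, so $\partial_{\bar t}Q_{f_1}(t)=f_1(\bar t)(\bar t-t)^{r_1-2}=:G(t)$. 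First I would check that $G$ transforms in the way forced on $\partial_{\bar t}$ of a $\Gm$‑invariant section: because the entries of $\gm\in\SL_2(\ZZ)$ are real we have $\overline{\gm t}=\gm\bar t$ and $\overline{ct+d}=c\bar t+d$, and together with the modularity of $f_1$ this gives $G(\gm t)=v_1(\gm)(ct+d)^{2-r_1}\overline{(ct+d)^2}\,G(t)$. Hence looking for a solution $h$ of $\partial_{\bar t}h=-G$ that is $\Gm$‑invariant for $|^-_{v_1,2-r_1}$ makes sense, and if one is found then $q:=Q_{f_1}+h$ is holomorphic on $\lhp$ and still satisfies $q|^-_{v_1,2-r_1}(\gm-1)=c_{f_1}(\gm^{-1}\rho,\rho)$, because $h|^-_{v_1,2-r_1}(\gm-1)=0$.

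It remains to produce such an $h$ inside $\dsv{v_1,2-r_1}{-\om}$. Abstractly $h$ exists: $\Gm\backslash\lhp$ is a non‑compact Riemann surface, hence Stein by Behnke--Stein, so $\partial_{\bar t}$ is surjective on smooth sections of every holomorphic line bundle over it (the finite isotropy of $\Gm$ on $\lhp$ being absorbed by averaging), which already yields a $\Gm$‑invariant smooth $h$ with $\partial_{\bar t}h=-G$ and hence a holomorphic $q$. The hypothesis on $f_1$ is what lets one keep $q$ inside $\dsv{v_1,2-r_1}{-\om}$: the bound $f_1(z)=\oh(e^{A\im z})$ means the Fourier expansion of $f_1$ has only finitely many terms that do not decay at the cusp, so $G(t)=-f_1(\bar t)(\bar t-t)^{r_1-2}$ grows at most like a fixed power of $\im t$ times an exponential in $|\im t|$ near the cusp and like a power of $\im t$ near $\RR$, and one can take for $h$ an explicit particular solution of $\partial_{\bar t}h=-G$ — e.g.\ a Cauchy--Pompeiu integral over a fundamental domain against a Poincar\'e‑type kernel, or the term‑by‑term primitive built from the Fourier expansion of $f_1$ — whose growth is then controlled in terms of $A$. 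One may also first peel off the $X_{r_1}(v_1)$‑component of $f_1$ using Proposition~\ref{prop-X}, whose potential already lies in $\dsv{v_1,2-r_1}{-\infty}\subset\dsv{v_1,2-r_1}{-\om}$, and thereby reduce to the case of a cusp form.

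The transformation‑behavior computation for $G$ and the Stein vanishing are routine. The step I expect to be the real obstacle is the last one: solving $\partial_{\bar t}h=-G$ with sufficient control on the growth of $h$ near $\RR$ and at the cusp to ensure that $q=Q_{f_1}+h$ genuinely lands in $\dsv{v_1,2-r_1}{-\om}$ rather than in an uncontrolled holomorphic function, and tracking how the exponent $A$ propagates through the construction of $h$ — which is presumably also what pins down the shape of the ``fairly explicit'' $q_{f_1}$ alluded to in the statement.
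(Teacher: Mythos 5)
Your route is genuinely different from the paper's. The paper disposes of this proposition in two sentences: Theorem~C of \cite{BCD} reduces the existence of $q\in\dsv{v_1,2-r_1}{-\om}$ to the existence of a $(2-r_1)$-harmonic lift of $f_1$, and Theorem~1.1 of \cite{Br14} supplies such lifts for forms of at most exponential growth --- that citation is the only place the growth hypothesis is used, and it is why the paper calls the construction ``highly non-explicit''. Your plan --- keep the real-analytic potential $Q_{f_1}$, compute $\partial_{\bar t}Q_{f_1}(t)=f_1(\bar t)(\bar t-t)^{r_1-2}=G(t)$, check that $G$ transforms as $\partial_{\bar t}$ of an $|^-_{v_1,2-r_1}$-invariant function must, and then kill $G$ by an invariant solution of $\partial_{\bar t}h=-G$ coming from Stein theory on $\Gm\backslash\lhp$ --- is a viable alternative: identifying $H^1\bigl(\Gm;\dsv{v_1,2-r_1}{-\om}\bigr)$ with the first cohomology of an invertible sheaf on the open modular curve (finite stabilizers, characteristic zero) and invoking vanishing on a Stein curve does make every Eichler cocycle a coboundary in this module, in the spirit of the paper's own remark that every cocycle dies in a sufficiently large module. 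Indeed, carried out properly your argument would not need the exponential-growth hypothesis at all.

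The flaw in your write-up is where you locate the difficulty. By the table \eqref{Dd}, $\dsv{v_1,2-r_1}{-\om}$ consists of \emph{all} holomorphic functions on $\lhp$ with the action $|^-_{v_1,2-r_1}$; there is no condition whatsoever at $\proj\RR$ or at the cusp (it is $\dsv{v_1,2-r_1}{-\infty}$ that carries the polynomial-growth condition). Hence the step you defer as ``the real obstacle'' --- controlling the growth of $h$ so that $q=Q_{f_1}+h$ lands in the module --- is vacuous: any invariant smooth solution of $\partial_{\bar t}h=-G$ already gives $q\in\dsv{v_1,2-r_1}{-\om}$, and the place where you propose to spend the hypothesis $f_1(z)=\oh(e^{A\im z})$ is not where it does any work. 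Conversely, the step you label routine is where a careful write-up must spend its effort: passing from the equivariant $\bar\partial$-problem on $\lhp$ (with a multiplier system, a real weight, hence branch choices in $(ct+d)^{2-r_1}$, and elliptic fixed points) to the quotient is not a literal descent of smooth sections at the branch points, so one should argue with the invariant direct image sheaf or with equivariant sheaf cohomology rather than ``averaging''; only the finite isotropy is averaged, and this needs to be said precisely. Your suggested reduction to cusp forms via Proposition~\ref{prop-X} is consistent with how the paper actually uses the present proposition, but note that that reduction rests on Knopp--Mawi \cite{KM10} and lands in $\dsv{v_1,2-r_1}{-\infty}$, so it does not by itself remove the need for the main construction.
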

\begin{proof}Theorem~C in \cite{BCD} implies that such $q$ exist if the
automorphic form $f_1$ has a $(2-\nobreak r)$-harmonic lift. For
exponentially growing modular forms the existence of such harmonic
lifts follows from Theorem~1.1 in~\cite{Br14}.\end{proof}

\rmrk{Remark}The proof of the existence of harmonic lifts is highly
non-explicit. Here we need it only for cusp forms. Then we may work
with the cocycle $\ps^\infty_{f_1}$. The corresponding functions
$q^\infty_{f_1}$ are mock modular forms with shadow (a multiple of)
$f_1$. The difference $q^\infty_{f_1} - q_{f_1}$ be made more or less
explicit.

\begin{proof}[Proof of part~ii) of Proposition~\ref{prop-triv}.]The
function $q$ in Lemma~\ref{lem-pot} is provided by
Proposition~\ref{prop-X} for $f_1\in X_{r-1}(v_1)$. For cusp forms we
use Proposition~\ref{prop-hl}.
\end{proof}
\section{Triple integral}\label{sect-tri}
Our aim in this section is to prove Theorem~\ref{thm-tri}, which gives
an expression as a threefold iterated integral for the trilinear form
 $\T(f_1,f_2,f_3)$ introduced in~\eqref{T-def}. We reformulate the
 triple integral in Theorem~\ref{thm-tri} in
Proposition~\ref{prop-triFc}. This formulation is sufficiently explicit
to allow numerical computations, in Section~\ref{sect-num}, that
suggest that for many choices   of the weights   the 
trilinear form  $(f_1,f_2,f_3)\mapsto \T(f_1,f_2,f_3)$   is
non-zero.

Throughout this section we consider three modular forms
$f_1\in A_{r_1}(v_1)$, $f_2\in \cf{r-2}(v_2)$, $f_3\in M_{r_3}(v_3)$,
and assume that the weights $r_j$ and corresponding multiplier systems
$v_j$ satisfy the conditions in~\eqref{rv-assumpt}.

From   Subsection~\ref{sect-trip}   on we work under 
the stronger conditions
\be\label{str-cond}r_2<2\,,\quad 0<r_2<2\,,\quad r_1+r_2+r_3=4\,,\quad
v_1v_2v_3=1\,.\ee
The triple integral as stated in Theorem~\ref{thm-tri} is absolutely
convergent if $r_1,r_2<2$. We expect that analytic continuation in
$(r_1,r_2)$ is possible, but do not pursue this here.

\subsection{Truncation and fourfold integral}We start without the
additional assumption $r_1,r_2<2$.

\rmrk{Truncation}Like in the proof of Theorem~\ref{thm-Hab} we
approximate $c_{f_2}(i,\infty)$ by $c_{f_2}(i,ia)$ with $a>0$, and put
\be\label{Tadef}
 \T_a(f_1,f_2,f_3) \= \Bigl[ f_3, c_{\!f_1}(\rho-1,\rho)\cdot c_{\!f_2}(
 i,ia)
\Bigr]_{r_3}\,.\ee

\begin{lem}\label{lem-Tlimit}Under the assumptions \eqref{rv-assumpt}
\be \T(f_1,f_2,f_3) \= \lim_{a\rightarrow\infty} \T_a(f_1,f_2,f_3)
,.\ee
\end{lem}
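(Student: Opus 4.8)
The plan is to prove that $\T(f_1,f_2,f_3) = \lim_{a\to\infty}\T_a(f_1,f_2,f_3)$ by the same mechanism that was used for the analogous statement in the proof of Theorem~\ref{thm-Hab}, namely Lemma~\ref{lem-a} together with the continuity statement in part~ii) of the duality theorem. First I would recall that $\T(f_1,f_2,f_3) = \bigl[f_3, \cp(f_1,f_2)\bigr]_{r_3} = \bigl[f_3, c_{f_1}(\rho-1,\rho)\cdot c_{f_2}(i,\infty)\bigr]_{r_3}$, where the pairing is the one of Theorem~\ref{thm-dual} applied with weight $r_3$, and that $\T_a$ is obtained by replacing $c_{f_2}(i,\infty)$ with $c_{f_2}(i,ia)$. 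So the difference $\T(f_1,f_2,f_3) - \T_a(f_1,f_2,f_3)$ equals
\[
\bigl[f_3,\; c_{f_1}(\rho-1,\rho)\cdot\bigl(c_{f_2}(i,\infty) - c_{f_2}(i,ia)\bigr)\bigr]_{r_3},
\]
using bilinearity of the pairing and of the multiplication map $M$.

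Next I would invoke Lemma~\ref{lem-a}: since $f_2\in\cf{r_2}(v_2)$ is a cusp form, $c_{f_2}(i,ia)\to c_{f_2}(i,\infty)$ as $a\uparrow\infty$ in the natural topology of $\dsv{v_2,2-r_2}\infty$. The module $\dsv{v_1,2-r_1}\om$ is stable under multiplication and the multiplication map $M:\dsv{v_1,2-r_1}\om\otimes\dsv{v_2,2-r_2}\infty\to\dsv{v_1v_2,4-r_1-r_2}\infty = \dsv{v_3^{-1},r_3}\infty$ is continuous in the second argument for the relevant topologies (multiplication by the fixed holomorphic function $c_{f_1}(\rho-1,\rho)$, which extends holomorphically across $\proj\RR$, is a continuous operator; one checks this on the defining seminorms built from $\bigl(\frac1{1+t^2}\partial_t\bigr)^N$ applied to $f^-$, exactly as in the proof of Lemma~\ref{lem-a}). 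Hence $c_{f_1}(\rho-1,\rho)\cdot c_{f_2}(i,ia) \to c_{f_1}(\rho-1,\rho)\cdot c_{f_2}(i,\infty) = \cp(f_1,f_2)$ in the natural topology of $\dsv{v_3^{-1},r_3}\infty$.

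Finally I would apply part~ii) of Theorem~\ref{thm-dual}: since $f_3\in M_{r_3}(v_3) = \bigl(\dsv{v_3^{-1},r_3}{-\infty}\bigr)^\Gm \subset \udsv{v_3,r_3}{-\infty}$ (as noted in the proof of Corollary~\ref{cor-dual}), the linear form $\ph\mapsto[f_3,\ph]_{r_3}$ is continuous on $\dsv{v_3^{-1},r_3}\infty$ for its natural topology; this requires $r_3\not\in\ZZ_{\leq0}$, which holds under \eqref{rv-assumpt} except possibly $r_3=0$, and the case $r_3 = 0$ forces the finite-dimensional situation where the limit is trivial (there $c_{f_2}(i,ia)$ already lies in the polynomial module and converges, so one uses part~iii) instead). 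Applying continuity gives $\T_a(f_1,f_2,f_3) = [f_3, c_{f_1}(\rho-1,\rho)\cdot c_{f_2}(i,ia)]_{r_3} \to [f_3,\cp(f_1,f_2)]_{r_3} = \T(f_1,f_2,f_3)$, as claimed. The only genuine subtlety — the ``main obstacle'' — is verifying that multiplication by $c_{f_1}(\rho-1,\rho)$ is continuous on $\dsv{\cdot}\infty$; this is a routine but slightly technical seminorm estimate, handled by the product rule for $\bigl(\frac1{1+t^2}\partial_t\bigr)^N$ together with the fact that $t\mapsto(i-t)^{2-r_1}c_{f_1}(\rho-1,\rho;t)$ extends to a $C^\infty$ (indeed holomorphic) function near $\proj\RR$, so all the seminorms of the product are bounded by finitely many seminorms of the second factor.
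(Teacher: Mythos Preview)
Your proof is correct and follows essentially the same approach as the paper: invoke Lemma~\ref{lem-a} for the convergence $c_{f_2}(i,ia)\to c_{f_2}(i,\infty)$ in $\dsv{v_2,2-r_2}\infty$, note that multiplication by the fixed element $c_{f_1}(\rho-1,\rho)\in\dsv{v_1,2-r_1}\om$ is continuous into $\dsv{v_3^{-1},r_3}\infty$, and then apply the continuity in part~ii) of Theorem~\ref{thm-dual}. You actually supply more justification than the paper does (the seminorm argument for continuity of multiplication and the discussion of the boundary case $r_3=0$), but the structure of the argument is identical.
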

\begin{proof}Lemma~\ref{lem-a} states that $c_{f_2}(i,ia)$ approximates
$c_{f_2}(i,\infty)$ as $a\uparrow \infty$ in the natural topology on
$\dsv{v_2,2-r_2}\infty$. For fixed
$\ps = c_{f_1}(\rho-\nobreak 1,\rho)$ the multiplication
$\ph \mapsto \ps\,\ph$ is a continuous map
$\dsv{v_2,2-r_2}\infty \rightarrow \dsv{v_1v_2,4-r_1-r_2}\infty$. Since
the duality $[h,\ph]_{r_3}$ is continuous in $\ph$ for fixed $h$ the
lemma follows.
\end{proof}

\rmrk{Fourfold integral}The quantity $\T_a(f_1,f_2,f_3)$ has the
advantage that it has an expression involving four integrals over
compact sets: the two Eichler integrals from $\rho-1$ to $\rho$ and
from $i$ to $ia$, and the two integrals over compact cycles in the
duality theorem Theorem~\ref{thm-dual} and in the definition of
$\s_rf$. This is quite complicated. However, the compact domains of
integration allow us to choose any order of integration that suits us.
\begin{prop}\label{prop-4i}Under the assumptions \eqref{rv-assumpt} and
$r_3>0$:
\bad \T(f_1,&f_2,f_3) \= \lim_{a\rightarrow\infty}
\int_{\tau_1=\rho-1}^\rho f_1(\tau_1)
\int_{\tau_2=i}^{ia} f_2(\tau_2) \, \frac1\pi \int_{z\in C_2(a)}
f_3(z)\,
(-z-i)^{r_3}\\
&\quad\hbox{} \cdot
\frac1\pi \int_{\tau\in C_1(a)}\,
\hypg21\biggl(1,1;r_3;\frac{(\tau-i)(z+i)}{(\tau+i)(z-i)}\biggr)\,
\Bigl( \frac{\tau_1-\tau}{i-\tau}\Bigr)^{r_1-2}\, \Bigl(
 \frac{\tau_2-\tau)}{i-\tau}\Bigr)^{r_2-2}\\
 &\quad\hbox{} \cdot
\frac{d\tau}{\tau^2+1}\; \frac{dz}{z^2+1}\; d\tau_2 \; d\tau_1\,. \ead
We take $C_2(a)$ and $C_1(a)$ as positively oriented circles
$\bigl|\frac{z-i}{z+i}\bigr|=c_2(a)$ and
$\bigl|\frac{\tau-i}{\tau+i}\bigr|=c_1(a)$ with $c_1(a)< c_2(a)<1$ such
that $C_1(a)$ encircles paths from $\rho-1$ to $\rho$ and from $i$ to
$ia$. See Figure~\ref{fig-4fi}.
\end{prop}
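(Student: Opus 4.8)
The plan is to work at a fixed $a>0$, unravel every definition entering $\T_a(f_1,f_2,f_3)$ from \eqref{Tadef}, reorder the finitely many compact integrations that appear, and only at the end invoke Lemma~\ref{lem-Tlimit} to place the limit $a\to\infty$ in front. First I would set $\ph_a:=c_{f_1}(\rho-1,\rho)\cdot c_{f_2}(i,ia)$; since the two integration paths are compact subsets of $\uhp$, this product lies in $\dsv{v_1v_2,4-r_1-r_2}\om=\dsv{v_3^{-1},r_3}\om$. As $r_3>0$, part~i) of the duality theorem Theorem~\ref{thm-dual}, applied with weight $r_3$ and with $h=f_3\in M_{r_3}(v_3)\subset\udsv{v_3,r_3}{-\om}$, rewrites $\T_a$ through \eqref{dualdef} as
\[
  \T_a(f_1,f_2,f_3)\=\frac1\pi\int_{z\in C_2(a)}(-z-i)^{r_3}\,f_3(z)\;\s_{r_3}\ph_a^-(z)\;\frac{dz}{(z+i)^2}\,,
\]
with $\ph_a^-(z)=(i-z)^{r_3}\ph_a(z)$ and $C_2(a)$ a cycle in $\uhp$ encircling $i$ once positively, homotopic to $\proj\RR$ inside the domain of $\s_{r_3}\ph_a^-$. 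Next I would substitute the defining integral \eqref{sigrdef} of $\s_{r_3}$, over a contour $C_1(a)$ encircling $i$ together with the branch locus of $\ph_a^-$ in $\uhp\cup\RR$, with $z$ lying outside $C_1(a)$; and substitute the defining Eichler integrals \eqref{Eidef} of $c_{f_1}(\rho-1,\rho;\tau)$ over $\tau_1$ from $\rho-1$ to $\rho$ and of $c_{f_2}(i,ia;\tau)$ over $\tau_2$ from $i$ to $ia$.

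What remains is bookkeeping. Using $r_3=(2-r_1)+(2-r_2)$ from \eqref{rv-assumpt}, the factor $(i-\tau)^{r_3}$ coming from $\ph_a^-$ splits as $(i-\tau)^{2-r_1}(i-\tau)^{2-r_2}$, and absorbing these into the Eichler factors $(\tau_1-\tau)^{r_1-2}$ and $(\tau_2-\tau)^{r_2-2}$ produces $\bigl(\tfrac{\tau_1-\tau}{i-\tau}\bigr)^{r_1-2}$ and $\bigl(\tfrac{\tau_2-\tau}{i-\tau}\bigr)^{r_2-2}$. Simultaneously the prefactor $\tfrac{z+i}{z-i}$ of $\s_{r_3}$ combines with $\tfrac1{(z+i)^2}$ from \eqref{dualdef} into $\tfrac1{z^2+1}$, while $\tfrac{d\tau}{\tau^2+1}$ is already of the stated shape and the two factors $\tfrac1\pi$ account for the $\tfrac1{\pi^2}$. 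For fixed $a$ all four one-dimensional integrations run over compact sets ($z$ over $C_2(a)$, $\tau$ over $C_1(a)$, and $\tau_1,\tau_2$ over compact paths), and the integrand is jointly continuous on their product --- the contour discussion below keeps every denominator bounded away from $0$ --- so Fubini permits reordering them into the order $\int_{\tau_1}\int_{\tau_2}\int_{z\in C_2(a)}\int_{\tau\in C_1(a)}$ of the proposition. Lemma~\ref{lem-Tlimit} then reinstates the limit.

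The step I expect to be the main obstacle is the simultaneous, $a$-dependent choice of the two contours together with the branch identity just used. For fixed $a$, take the path from $\rho-1$ to $\rho$ and the path from $i$ to $ia$ to be geodesic segments; then their union $K(a)$ is a compact \emph{connected} subset of $\uhp$ (the two segments meet at $i$) and contains $i$, and $\ph_a^-$ extends holomorphically from $\lhp$ across $\RR$ to a single-valued holomorphic function on $\CC\setminus K(a)$. Hence $\s_{r_3}\ph_a^-$ is holomorphic outside any simple closed curve $C_1(a)$ enclosing $K(a)$, and the duality cycle $C_2(a)$ must be slipped between $C_1(a)$ and $\proj\RR$, which is legitimate because $f_3$ --- hence $(-z-i)^{r_3}f_3(z)$ --- is holomorphic on all of $\uhp$. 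In the disk coordinate $w=\tfrac{z-i}{z+i}$ this means choosing radii $c_1(a)<c_2(a)<1$ with $|w|=c_1(a)$ encircling the image of $K(a)$; since that image contains the segment from $0$ to $\tfrac{a-1}{a+1}$ (the image of $[i,ia]$), one is forced to let $c_1(a)\to1$, whence also $c_2(a)\to1$, as $a\to\infty$. For such contours $\bigl|\tfrac{(\tau-i)(z+i)}{(\tau+i)(z-i)}\bigr|=c_1(a)/c_2(a)<1$ everywhere on the domain of integration, so the series defining $\hypg21(1,1;r_3;\,\cdot\,)$ in \eqref{sigrdef} converges there. Finally, the branch identity $(i-\tau)^{r_3}(\tau_1-\tau)^{r_1-2}(\tau_2-\tau)^{r_2-2}=\bigl(\tfrac{\tau_1-\tau}{i-\tau}\bigr)^{r_1-2}\bigl(\tfrac{\tau_2-\tau}{i-\tau}\bigr)^{r_2-2}$ is checked directly for $\tau\in\lhp$, where $i-\tau$, $\tau_1-\tau$ and $\tau_2-\tau$ all lie in $\uhp$ with argument in $(0,\pi)$; both sides are single-valued on the connected set $\CC\setminus K(a)$, because any loop there winds the same number $n$ of times around each of $i$, $\tau_1$, $\tau_2$, so both sides acquire the monodromy factor $e^{2\pi i n(r_1+r_2+r_3-4)}=1$ by $r_1+r_2+r_3=4$. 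The identity therefore propagates by analytic continuation to $C_1(a)$, which is exactly what the reduction needs.
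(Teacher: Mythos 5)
Your proposal is correct and follows essentially the same route as the paper's proof: unravel the duality bracket, the definition of $\s_{r_3}$, the $f^\pm$ normalizations and the Eichler integrals, use $r_1+r_2=4-r_3$ to combine $(i-\tau)^{r_3}$ with the Eichler kernels into the quotient powers $\bigl(\frac{\tau_j-\tau}{i-\tau}\bigr)^{r_j-2}$ holomorphic off paths joining $\tau_j$ to $i$, and pass to the limit via Lemma~\ref{lem-Tlimit}. Your added detail on the $a$-dependent contours and the single-valuedness/branch bookkeeping is consistent with, and merely makes explicit, what the paper's brief argument asserts.
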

\begin{figure}[tp]
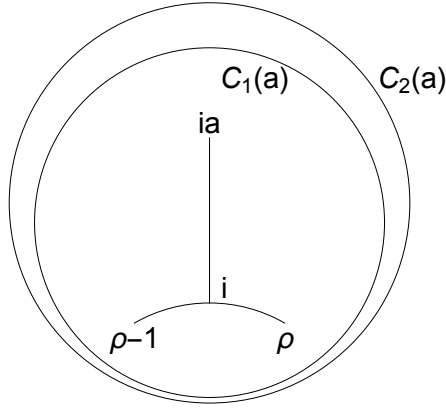

\begin{center}\grf7{cpmc-4fi}\end{center}
\caption{Paths of integration in Proposition~\ref{prop-4i}.}
\label{fig-4fi}
\end{figure}
\begin{proof}We unravel the definitions in Theorem~\ref{thm-dual}, and
in \eqref{sigrdef}, \eqref{fpm} and \eqref{Eidef}, and combine them as
the limit of a fourfold integral. We use that $r_1+r_2=4-r_3$. For
$\tau\in C_1(a)$ we can combine $(\tau_j-\nobreak \tau)^{r_j-2}$ and
$(i-\nobreak\tau)^{2-r_j}$ to get
$\bigl( \frac{\tau_j-\tau}{i-\tau}\bigr)^{r_j-2}$ as a holomorphic
function on $\proj\CC$ minus a path from $\tau_j$ to~$i$. Actually, in
\eqref{sigrdef} we need the following form of the Eichler integrals
\[ c_{f_j}(x_1,x_2)^-(\tau) = (i-\tau)^{2-r_j} \,
c_{f_j}(x_1,x_2;\tau)\,,\]
since these functions are holomorphic on a neighborhood of
$\lhp\cup\proj\RR$ in~$\proj\CC$. They are directly given by
\be c_{f_j}(x_1,x_2)^-(\tau) \= \int_{\tau=x_1}^{x_2} \, f_j(\tau_j)
\,\Bigl(\frac{\tau_j-\tau}{i-\tau}\Bigr)^{r_j-2}\, d\tau\,.\ee
We need $r_3>0$, since the hypergeometric function in formula
\eqref{sigrdef} is not defined at $r_3=0$.
\end{proof}

\subsection{Triple integral}\label{sect-trip}
In this subsection we prove Theorem~\ref{thm-tri}. To the assumptions
\eqref{rv-assumpt} we add in~\eqref{str-cond} the conditions $r_1<2$,
$r_2<2$. These additional conditions restrict the triples
$f_1,f_2,f_3)$ considerably. We have $r_3>0$, and moreover $f_2$ can
only be a multiple of $\eta^{2r_2}$, hence $v_2=v[r_2]$.

The plan of the proof is to modify the fourfold integral in
Proposition~\ref{prop-4i}. We start with the two inner integrals, which
describe $\bigl[ f_3, \kp(\tau_1,\tau_2;\cdot) \bigr]_{r_3}$, with the
product of modified Eichler kernels
\be\label{kpd1} \kp(\tau_1,\tau_2;\tau) \=
\Bigl(\frac{\tau_1-\tau}{i-\tau}\Bigr)^{r_1-2}\,
\Bigl(\frac{\tau_2-\tau}{i-\tau}\Bigr)^{r_2-2}\,.\ee
Lemma~\ref{lem-gtf}, derived in the context of the universal covering
group of $\SL_2(\RR)$, gives a simpler expression for the two innermost
integrals.

The main work is the transformation is the simplification of this
expression. We use Kummer relation for the hypergeometric function, and
go over from a closed contour to a segment in $\uhp$ as the path of
integration.

Combining the resulting integral with the integrals over $\tau_1$ and
$\tau_2$ finishes the proof.\medskip

\rmrk{Standing assumptions}In stating lemmas we work with the standard
assumptions for the $f_j$, the $r_j$, and the $v_j$. From
Lemma~\ref{lem-ti2} onwards we assume $r_1<2$, $r_2<2$.

\begin{lem}\label{lem-ti1}Let $\tau_1\neq\tau_2$. Then
\badl{ti1} \bigl[ f_3&, \kp(\tau_1,\tau_2;\cdot)\bigr]_{r_3}
\= \frac{(2i)^{2-r_2}}\pi \bigl|c\tau_1+d\bigr|^{2r_1-2}\,
(\tau_2-\bar\tau_1)^{r_2-2} \\
&\qquad\hbox{} \cdot
\int_{z\in C} f_3(z)\, (-i-z)^{r_3}\, \Bigl(
\frac{z+i}{z-\bar\tau_1}\Bigr)^{1-r_3}\\
&\qquad\hbox{} \cdot
\hypg21\biggl( 1,2-r_2;r_3;
  \frac{(\tau_2-\tau_1)(z-\bar\tau_1}{(\tau_2-\bar\tau_1)(z-\tau_1)}
  \biggr)\, \frac{dz}{(z-\tau_1)(z+i)}\,, \eadl
where $C$ is a positively oriented curve in $\uhp$ encircling all
singularities of the integrand.The matrix $g=\matc abcd\in\SL_2(\RR)$
satisfies $g\tau_1 = i$ and $g\tau_2\in i(1,\infty)$, and
$\arg(ci+\nobreak d) \in[0,\pi)$.
\end{lem}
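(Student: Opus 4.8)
The plan is to start from the definition of the bilinear form $[\cdot,\cdot]_{r_3}$ applied to $f_3\in M_{r_3}(v_3)$ and the modified kernel $\kp(\tau_1,\tau_2;\cdot)$, and to reduce the evaluation to a single contour integral by a change of variable that sends $\tau_1$ to $i$. Concretely, by Theorem~\ref{thm-dual}~i) together with the definition \eqref{sigrdef} of $\s_{r_3}$, the quantity $\bigl[ f_3,\kp(\tau_1,\tau_2;\cdot)\bigr]_{r_3}$ is a double integral: an outer contour in $z$ coming from \eqref{dualdef}, and an inner contour in the auxiliary variable $\tau$ coming from $\s_{r_3}$. Since $\kp(\tau_1,\tau_2;\cdot)$ is, as a function on $\lhp$, the product of two modified Eichler kernels $\bigl(\frac{\tau_j-\tau}{i-\tau}\bigr)^{r_j-2}$, the inner $\tau$-integral is an integral of a product of two such factors against the hypergeometric kernel of $\s_{r_3}$. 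The idea is that this inner integral can be evaluated in closed form.

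First I would make the substitution that conjugates $\tau_1$ to $i$: choose $g=\matc abcd\in\SL_2(\RR)$ with $g\tau_1=i$ and $g\tau_2\in i(1,\infty)$, and normalize the argument by $\arg(ci+d)\in[0,\pi)$. Under $\tau\mapsto g\tau$ (and correspondingly $z\mapsto gz$) the product $\bigl(\frac{\tau_1-\tau}{i-\tau}\bigr)^{r_1-2}$ becomes a power of $\frac{i-\tau}{i-\tau}$-type expression trivialized up to the automorphy factor $|c\tau_1+d|^{2r_1-2}$, while the factor for $\tau_2$ picks up $(\tau_2-\bar\tau_1)^{r_2-2}$ and the factor $(2i)^{2-r_2}$. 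The hypergeometric kernel of $\s_{r_3}$ is $\Gm$-covariant (this is exactly why it arises naturally from principal series, as stated in \S\ref{sect-ps}), so after the substitution the inner $\tau$-integral becomes an integral of $\bigl(\frac{gz-g\tau_2}{\ldots}\bigr)^{r_2-2}$ times $\hypg21\bigl(1,1;r_3;\cdot\bigr)$ against $\frac{d\tau}{\tau^2+1}$, which is a contour integral one can compute by residues or by recognizing it as an Euler-type integral representation of a $\hypg21$. The expected outcome of this inner computation is precisely the factor $\hypg21\bigl(1,2-r_2;r_3;\cdot\bigr)$ appearing in \eqref{ti1}, with argument $\frac{(\tau_2-\tau_1)(z-\bar\tau_1)}{(\tau_2-\bar\tau_1)(z-\tau_1)}$, together with the remaining elementary factors $(-i-z)^{r_3}$, $\bigl(\frac{z+i}{z-\bar\tau_1}\bigr)^{1-r_3}$ and $\frac{1}{(z-\tau_1)(z+i)}$.

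The main obstacle I anticipate is the bookkeeping of branches and arguments: the kernels $\bigl(\frac{\tau_j-\tau}{i-\tau}\bigr)^{r_j-2}$, the factor $(-i-z)^{r_3}$, and the various powers produced by the $\SL_2(\RR)$-substitution all involve non-integral exponents, so one must track $\arg$ conventions carefully through $g$ (hence the explicit normalization $\arg(ci+d)\in[0,\pi)$ and $g\tau_2\in i(1,\infty)$) to make sure the identity holds with the stated branches and not merely up to a root of unity. A secondary point is the deformation of contours: after the substitution the contour $C$ must still encircle all singularities of the new integrand (the points $\tau_1$, $\bar\tau_1$, $-i$, and the branch cut of the $\hypg21$), which requires a small homotopy argument using that $f_3$ extends holomorphically near $\proj\RR$. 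Once the inner integral is evaluated and the branches are reconciled, assembling the constant $\frac{(2i)^{2-r_2}}{\pi}$ and the automorphy factor $|c\tau_1+d|^{2r_1-2}$ is routine, and this will be carried out in the context of the universal covering group where the relevant hypergeometric identities (and Lemma~\ref{lem-gtf}) are available.
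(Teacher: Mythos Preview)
Your proposal is correct and aligns with the paper: the paper's proof of this lemma is simply to invoke Lemma~\ref{lem-gtf} with $h^+(z) = f_3(z)\,(-i-z)^{r_3}$, exactly as you indicate in your final sentence.

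One point in your sketch deserves sharpening. You write that ``the hypergeometric kernel of $\s_{r_3}$ is $\Gm$-covariant''; in fact $\s_{r_3}$ is only a \emph{section} of the intertwining operator $J_{r_3}$ (Proposition~\ref{prop-lift}), not itself an intertwiner, so it is not equivariant. The paper's mechanism (Lemma~\ref{lem-kps3kp}\,ii)) is instead to observe that the defect $(\s_{r_3}\kp)|^+_{2-r_1,2-r_2,r_3}\tilde g - \s_{r_3}\kp$ extends holomorphically to~$\uhp$, and then to use the freedom in Theorem~\ref{thm-dual}\,i) to add any such function to $\s_{r_3}f^-$ without changing the pairing. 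Consequently one does \emph{not} substitute $z\mapsto gz$ in the outer integral; rather $z$ stays fixed, $\s_{r_3}\kp(\tau_1,\tau_2;\cdot)$ is replaced by its $\tilde g$-transform, the explicit formula for $\s_{r_3}\kp(i,ip;\cdot)$ from Lemma~\ref{lem-skr} (obtained by a power-series computation in disk coordinates rather than an Euler integral) is inserted, and the action $|^\prs_{1-r_3/2,2-r_3}\tilde g$ on the third variable is unpacked to produce the factors involving $z-\tau_1$ and $z-\bar\tau_1$.
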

\begin{proof}This   is   Lemma~\ref{lem-gtf} with
$h^+(z) = f_3(z) \,(-i-z)^{r_3}$.
\end{proof}
Relation \eqref{ti1} is complicated, since it depends directly on
$\tau_1$ and $\tau_2$, and indirectly via the matrix~$g$. In the
following result we use the lower row $(c,d)$ of $g$ again.

\begin{lem}\label{lem-ti2}If $r_1,r_2<2$ the integral in~\eqref{ti1} is
equal to:
\bad - \pi &\, 2^{2-r_1} \, e^{-\pi i (r_1/2+r_3)}\,
\Bt(2-r_1,2-r_2)^{-1}\,
(\tau_2-\bar\tau_1)^{2-r_2}\, \bigl|c\tau_1+d\big|^{2-2r_1}\\
 &\qquad\hbox{} \cdot
\int_{z=\tau_1}^{\tau_2} f_3(z)\, \Bigl( \frac{z-\tau_1}{\tau_2-\tau_1}
\Bigr)^{1-r_2}\, \Bigl( \frac{\tau_2-z}{\tau_2-\tau_1}\Bigr)^{1-r_1}\,
\frac{dz}{\tau_2-\tau_1}\,. \ead
The path of integration can be chosen along the geodesic segment
$s_{\tau_1,\tau_2}$ from $\tau_1$ to $\tau_2$, or be deformed in such a
way that it does not cross the geodesic $\ell_{\tau_1,\tau_2}$ through
$\tau_1$ and $\tau_2$ in points of
$\ell_{\tau_1,\tau_2}\setminus s_{\tau_1,\tau_2}$.
\end{lem}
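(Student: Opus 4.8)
The plan is to start from the contour integral appearing in Lemma~\ref{lem-ti1}, namely
\[
I \;:=\; \int_{z\in C} f_3(z)\, (-i-z)^{r_3}\,\Bigl(\frac{z+i}{z-\bar\tau_1}\Bigr)^{1-r_3}\,
\hypg21\!\Bigl(1,2-r_2;r_3;\frac{(\tau_2-\tau_1)(z-\bar\tau_1)}{(\tau_2-\bar\tau_1)(z-\tau_1)}\Bigr)\,\frac{dz}{(z-\tau_1)(z+i)}\,,
\]
and to perform two independent transformations: first a Kummer-type transformation of the ${}_2F_1$ to replace the argument $\frac{(\tau_2-\tau_1)(z-\bar\tau_1)}{(\tau_2-\bar\tau_1)(z-\tau_1)}$ by its complementary argument $\frac{(\tau_2-\tau_1)(z-\bar\tau_1)}{(\tau_2-\bar\tau_1)(z-\tau_1)}-$wait, the right choice is the one sending the argument to $1-(\cdot)$, which turns out to be $\frac{(\tau_2-\bar\tau_1)(z-\tau_1)-(\tau_2-\tau_1)(z-\bar\tau_1)}{(\tau_2-\bar\tau_1)(z-\tau_1)}$. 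Using the linear-fractional identity $(\tau_2-\bar\tau_1)(z-\tau_1)-(\tau_2-\tau_1)(z-\bar\tau_1)=(\tau_1-\bar\tau_1)(z-\tau_2)$, this complementary argument simplifies to $\frac{(\tau_1-\bar\tau_1)(z-\tau_2)}{(\tau_2-\bar\tau_1)(z-\tau_1)}$, which has a zero at $z=\tau_2$ rather than at $z=\bar\tau_1$; this is exactly what is needed so that the final path can be pulled down to the geodesic segment $s_{\tau_1,\tau_2}$ joining $\tau_1$ to $\tau_2$. The appropriate Kummer relation is the one expressing $\hypg21(a,b;c;w)$ in terms of $(1-w)^{-b}\hypg21(b,c-a;c;\frac{w}{w-1})$ (Euler/Pfaff transformation), producing the Beta-function constant $\Bt(2-r_1,2-r_2)^{-1}=\Gf(4-r_1-r_2)/(\Gf(2-r_1)\Gf(2-r_2))=\Gf(r_3)/(\Gf(2-r_1)\Gf(2-r_2))$ once one also invokes the Euler integral representation in the next step.

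Second, with $r_1,r_2<2$ both exponents $1-r_1$ and $1-r_2$ are $>-1$, so one can replace the remaining ${}_2F_1$ by its Euler integral $\hypg21(1,2-r_2;r_3;w)=\Bt(2-r_2,r_3-2+r_2)^{-1}\int_0^1 s^{1-r_2}(1-s)^{r_3-3+r_2}(1-ws)^{-1}\,ds$ (noting $r_3-2+r_2=2-r_1>0$), collapse the $z$-contour by residues at $z=\tau_1$ and at the pole introduced by the factor $(1-ws)^{-1}$ (which sits at a point moving along $s_{\tau_1,\tau_2}$ as $s$ runs over $[0,1]$), and thereby convert the $z$-contour integral against $f_3$ into an integral of $f_3$ along $s_{\tau_1,\tau_2}$ with a weight built from the remaining $s$-integration. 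The change of variables $z=\tau_1+s(\tau_2-\tau_1)$, equivalently $s=\frac{z-\tau_1}{\tau_2-\tau_1}$, then turns the $s$-weight $s^{1-r_2}(1-s)^{1-r_1}\,ds$ into $\bigl(\frac{z-\tau_1}{\tau_2-\tau_1}\bigr)^{1-r_2}\bigl(\frac{\tau_2-z}{\tau_2-\tau_1}\bigr)^{1-r_1}\frac{dz}{\tau_2-\tau_1}$, which is precisely the integrand in the statement. Along the way the prefactors $(-i-z)^{r_3}$, $(z+i)^{1-r_3}$, $(z+i)^{-1}$, $(i-\tau)$-type factors and the powers $(\tau_2-\bar\tau_1)^{r_2-2}$, $|c\tau_1+d|^{2r_1-2}$ inherited from Lemma~\ref{lem-ti1} recombine; a bookkeeping computation of the constant, including the branch factor $e^{-\pi i(r_1/2+r_3)}$ coming from the argument conventions $\arg(-i-z)$, $\arg(i-\tau)$ and $\arg(ci+d)\in[0,\pi)$, yields the claimed prefactor $-\pi\,2^{2-r_1}e^{-\pi i(r_1/2+r_3)}\Bt(2-r_1,2-r_2)^{-1}(\tau_2-\bar\tau_1)^{2-r_2}|c\tau_1+d|^{2-2r_1}$.

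The main obstacle I expect is not the ${}_2F_1$ identity itself but the careful tracking of branches of the many fractional powers: each of $(-i-z)^{r_3}$, $\bigl(\frac{z+i}{z-\bar\tau_1}\bigr)^{1-r_3}$, $\bigl(\frac{z-\tau_1}{\tau_2-\tau_1}\bigr)^{1-r_2}$, $\bigl(\frac{\tau_2-z}{\tau_2-\tau_1}\bigr)^{1-r_1}$ must be given a consistent determination on a neighbourhood of $s_{\tau_1,\tau_2}$, and the contour deformation from the large circle $C$ down to $s_{\tau_1,\tau_2}$ is legitimate only off the geodesic line $\ell_{\tau_1,\tau_2}$ away from the segment — this is exactly the provenance of the stated freedom in choosing the path of integration. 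I would handle this by working in a fixed branch valid on $\proj\CC$ minus the two rays of $\ell_{\tau_1,\tau_2}\setminus s_{\tau_1,\tau_2}$, verifying that $f_3$ being holomorphic on all of $\uhp$ lets the contour be collapsed without crossing those cuts, and only at the very end checking the overall constant by evaluating both sides at one convenient special configuration (e.g.\ $\tau_1=i$, $\tau_2=it$, so $g=1$, $c=0$, $d=1$) where the identity reduces to a classical Euler-integral evaluation.
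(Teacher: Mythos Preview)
Your idea of replacing $\hypg21(1,2-r_2;r_3;w)$ by its Euler integral
$\Bt(2-r_1,2-r_2)^{-1}\int_0^1 s^{1-r_2}(1-s)^{1-r_1}(1-ws)^{-1}\,ds$
(valid precisely when $r_1,r_2<2$), interchanging the $s$- and $z$-integrals, and evaluating the $z$-integral by a residue is a genuinely different route from the paper's and it can be made to work. The paper instead passes to the disk coordinate $w=\frac{z-\tau_1}{z-\bar\tau_1}$, uses the Kummer connection formulas to split $\hypg21(1,2-r_2;r_3;1/u)$ into a pure power plus a second hypergeometric term, and then collapses the $u$-contour onto the two sides of the real segment $[0,1]$; the hypergeometric term is single-valued there and cancels, while the power term jumps by $e^{\pm\pi i(1-r_1)}$, producing the factor $2\pi i/\Gf(2-r_1)$ and the integral over $[0,1]$ directly. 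Your residue method is arguably more direct; the paper's method makes the origin of the branch constraints on the path (and the role of $\ell_{\tau_1,\tau_2}\setminus s_{\tau_1,\tau_2}$) more transparent.

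That said, your proposal has a real gap in the execution. After the Euler substitution the factors $\frac1{z-\tau_1}$ and $\frac1{1-ws}$ do \emph{not} give two separate residues: they combine to the single simple pole
\[
\frac{1}{(z-\tau_1)(1-ws)}\;=\;\frac{\tau_2-\bar\tau_1}{(\tau_2-\bar\tau_1)(z-\tau_1)-s(\tau_2-\tau_1)(z-\bar\tau_1)}\,,
\]
located at $z(s)=\dfrac{(\tau_2-\bar\tau_1)\tau_1-s(\tau_2-\tau_1)\bar\tau_1}{(\tau_2-\bar\tau_1)-s(\tau_2-\tau_1)}$. As $s$ runs over $[0,1]$ this point traces the \emph{geodesic} segment $s_{\tau_1,\tau_2}$ (it is a M\"obius image of $[0,1]$ through $\tau_1,\tau_2,\bar\tau_1$), not the chord. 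Consequently the change of variables is $s=\dfrac{(\tau_2-\bar\tau_1)(z-\tau_1)}{(\tau_2-\tau_1)(z-\bar\tau_1)}$, not $s=\dfrac{z-\tau_1}{\tau_2-\tau_1}$, and the passage from $s^{1-r_2}(1-s)^{1-r_1}\,ds$ to the claimed integrand is not immediate: it requires carrying along the factor $(z(s)-\bar\tau_1)^{r_3-1}$ from the original integrand and checking that all powers of $z-\bar\tau_1$ cancel (they do, because $(1-r_2)+(1-r_1)+(1-r_3)=-1$). Your final special-case check at $\tau_1=i$ would catch the constant but not this structural point. The preliminary Pfaff step you sketch is also unnecessary here and partly misstated; you can apply the Euler integral to $\hypg21(1,2-r_2;r_3;w)$ directly.
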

We use the beta-function $\Bt(a,b)=\Gf(a)\,\Gf(b)/\Gf(a+b)$.
\begin{proof} We rewrite for $z\in \uhp$
\[ (-i-z)^{r_3}\, \bigl( \frac{z+i}{z-\bar \tau_1}\Bigr)^{1-r_3} \=
e^{-\pi i r_3}\, (z+i)\,(z-\bar\tau_1)^{r_3-1} \,.\]
In the resulting expression for the integral in~\eqref{ti1} we go over
to the disk coordinate $w=\frac{z-\tau_1}{z-\bar\tau_1}$,
$z=\frac{\tau_1-w\bar\tau_1}{1-w}$, and put
$q=\frac{\tau_2-\tau_1}{\tau_2-\bar\tau_1}$. We choose the curve $C$
such that $|w|=q_1$ with $|q|< q_1<1$. We write $f_3[w]=f_3(z)$. The
integral is equal to
\begin{align}
\nonumber
&\= e^{-\pi i r_3} (\tau_1-\bar\tau_1)^{r_3-1} \\
\nonumber
&\qquad\hbox{} \cdot \int_{|w|=q_1} f_3[w]\, \hypg21\bigl(
1,2-r_2;r_3;q/w\bigr)\, (1-w)^{-r_3}\, \frac{dw}w\\
\label{ui}
&\=e^{-\pi i r_3} (\tau_1-\bar\tau_1)^{r_3-1} \\
\nonumber
&\qquad\hbox{} \cdot
 \int_{|u|=q_1/|q|} f_3\bigl[ qu\bigr] \, \hypg21\bigl(
1,2-r_2;r_3;u^{-1}\bigr)\,
(1-qu)^{-r_3} \, \frac{du}u\,.
\end{align}

We use Kummer relations (1), (7) \S6.5 and (9), (13), (17), (21) \S6.4
in \cite[\S6.5]{Lu} to get, under the additions conditions
$r_1,r_2\neq 1$:
\begin{align}
\nonumber
\hypg21\bigl( 1&,2-r_2;r_3;z\bigr) \=
\frac{\Gf(r_2-1)\,\Gf(r_3)}{\Gf(2-r_1)}\,
 (-z)^{r_2-2}(1-1/z)^{1-r_1}\\
 \label{KRi}
&\qquad\hbox{}
+\frac{r_3-1}{r_2-1} \, z ^{-1}\, \hypg21\bigl(1,2-r_3;r_2;1/z)\\
\nonumber
&\=\frac{\Gf(r_1-1)\,\Gf(r_3)}{\Gf(2-r_2)}\, z^{1-r_3}\,(1-z)^{1-r_1}\\
\label{KR1}
&\qquad\hbox{}
+ \frac{r_3-1}{1-r_1}\, \hypg21\bigl(1,2-r_2;r_1;1-z)\,.
\end{align}
We use that in both cases one hypergeometric function specializes to a
simpler expression.

Relation \eqref{KR1} is valid for $z\in (0,1)$ and extends to a relation
between holomorphic extensions on
$\CC \setminus \left( -\infty,0]\cup[1,\infty) \right)$. In relation
\eqref{KRi} there are no $z$ for which both hypergeometric series are
in their domain of absolute convergence. The relations hold for the
holomorphic extension for $z\in (-1,0)$, and extend to
$\CC\setminus [0,\infty)$.

The two singularities of the integrand in \eqref{ui} are determined by
the behavior of the hypergeometric function at $u=0$ and at $u=1$. The
Kummer relations show that
\be \hypg21\bigl(1,2-r_2;r_3;1/u\bigr) \ll
\begin{cases} |u|^{\min(1,2-r_2)} &\text{ as } u \rightarrow 0\,,\\
|u-1|^{\min(0,1-r_1)}& \text{ as } u\rightarrow 1\,.
\end{cases}
\ee
Together with $u^{-1}$ in $\frac{du}u$ this implies that the original
path of integration $|u|=q_1/|q|$ can be moved closely to the interval
$[0,1]$. We have to determine the contributions of the limits of the
integrals on both sides of the interval.
\[ \setlength\unitlength{3cm}
\begin{picture}(1.4,.2)(-.2,-.1)
\put(-.3,0){\line(1,0){1.6}}
\put(1,-.14){$1$}
\put(-.05,-.14){$0$} \red
\put(0,.04){\line(1,0){1}}
\put(1,.04){\vector(-1,0){.6}}
\endred \green
\put(0,-.04){\line(1,0){1}}
\put(0,-.04){\vector(1,0){.6}}\endgreen
\thicklines
\put(0,.005){\line(1,0){1}}
\put(0,-.005){\line(1,0){1}}
\end{picture}
\]

We have to consider the integrand in \eqref{ui} at points $u=x\pm i \e$
with $x\in (0,1)$ and $\e\downarrow 0$. The factors $f_3[qu]$ and
$(1-\nobreak qu)^{-r_3}$ are holomorphic at the points $u\in (0,1)$.

We use the Kummer relation~\eqref{KR1}. The points $1-1/x$ are in
$(-\infty,0)$. So the hypergeometric function
$\hypg21\bigl(1,2-\nobreak r_2,r_1;1-1/u)$ is holomorphic on the
interval $u=(0,1)$, and the contributions from the integrals on both
sides cancel each other. The factor $(1/u)^{1-r_3}$ in the first term
of \eqref{KR1} is the same on both sides of the interval. The imaginary
part of $1-1/u$ at $u=x\pm i\e$ is $\frac{\pm i \e}{x^2+\e^2}$. This
means that the factor $(1-\nobreak1/u)^{1-r_1}$ should be computed as
$\bigl( \frac{1-u}u\bigr)^{1-r_1}\, e^{\pm \pi i (1-r_1)}$. The
integral in \eqref{ui} is equal to
\begin{align*}e^{-\pi i r_3} &\,(\tau_1-\bar\tau_1)^{r_3-1} \int_{u=0}^1
f_3[qu] \,
(1-qu)^{-r_3} \, \frac{\Gf(r_1-1)\,\Gf(r_3)}{\Gf(2-r_2)} \, u^{r_3-1}\\
&\qquad\hbox{} \cdot
 \bigl(1/u-1\bigr)^{1-r_1}\, \frac{du}u\, \Bigl( - e^{\pi i(1-r_1)} +
 e^{-\pi i(1-r_1)} \Bigr)
 \displaybreak[0]\\
 &\= 2\pi i\, e^{-\pi i r_3} \,(\tau_1-\bar\tau_1)^{r_3-1}
 \frac{\Gf(r_3)}{\Gf(2-r_1)\, \Gf(2-r_2)}\, \int_{u=0}^1 f_3[qu]\,
 (1-qu \bigr)^{-r_3} \\
 &\qquad\hbox{} \cdot
  u^{1-r_2}\, (1-u)^{1-r_1}\, du\,.
\end{align*}
This integral is holomorphic in $r_1$ and $r_2$ under the conditions
$r_1<2$, $0<r_2<2$, $r_3=4-r_1-r_2>0$. The integral in the left hand
side of the relation in the lemma is holomorphic in $r_1$ and $r_2$ as
well. So we can drop the assumptions that $r_1$ and $r_2$ are not
integral.

As $u$ runs from $0$ to $1$, the image
$z=\frac{\tau_1-\bar\tau_1 q u}{1-qu}$ runs from $\tau_1$ to
$\frac{\tau_1-\bar\tau_1 q}{1-q} = \tau_2$ along the geodesic segment
$s_{\tau_1,\tau_2}$ in~$\uhp$. Carrying out the backward substitutions
$u=w/q$ and $w=\frac{z-\tau_1}{z-\bar\tau_1}$ we arrive at
\bad 2\pi i&\, e^{-\pi i r_3}
(\tau_1-\bar\tau_1)^{r_3-1}\, \Bt(2-r_1,2-r_2)^{-1}
\int_{z=\tau_1}^{\tau_2} f_3(z)\, \Bigl(
\frac{z-\bar\tau_1}{\tau_1-\bar\tau_1} \Bigr)^{r_3}
\\
&\qquad\hbox{} \cdot
\Bigl(
\frac{(z-\tau_1)(\tau_2-\bar\tau_1)}{(z-\bar\tau_1)(\tau_2-\tau_1)}\Bigr)^{1-r_2}\,
\Bigl(
\frac{(\tau_2-z)(\tau_1-\bar\tau_1)}{(z-\bar\tau_1)(\tau_2-\tau_1)}
\Bigr)^{1-r_1}\\
&\qquad\hbox{} \cdot
\frac{(\tau_2-\bar\tau_1)(\tau_1-\bar\tau_1)}{(\tau_2-\tau_1)(z-\bar\tau_1)^2}\,
dz\,, \ead
with the beta-function $\Bt(x,y)=\Gf(x)\Gf(y)/\Gf(x+\nobreak y)$. The
path of integration is the geodesic segment ~$s_{\tau_1,\tau_2}$. To
handle the powers we note that $z-\bar\tau_1$ and $\tau_1-\bar\tau_1$
are both in the upper half-plane. So with the standard choice of the
argument in $(-\pi,\pi)$ we have
\[ (\tau_1-\bar\tau_1)^{r_3-1}\, \Bigl(
\frac{z-\bar\tau_1}{\tau_1-\bar\tau_1} \Bigr)^{r_3} =
(\tau-\bar\tau_1)^{-1} \, (z-\bar\tau_1)^{r_3}\,.\]
In
$\Bigl( \frac{(z-\tau_1)(\tau_2-\bar\tau_1)}{(z-\bar\tau_1)(\tau_2-\tau_1)}\Bigr)^{1-r_2}$,
coming from $u^{1-r_2}$, the arguments of the quotients
 $\frac{z-\tau_1}{\tau_2-\tau_1}$ and
 $\frac{z-\bar\tau_1}{\tau_2-\bar \tau_1}$ are equal, and contained in
 $(-\pi,\pi)$. So we can split up this power correspondingly. The factor
coming from $(1-\nobreak u)^{1-r_1}$ can be handled analogously. This
leads to the following:
\begin{align}\nonumber 2\pi i&\, e^{-\pi i r_3}\, \Bt(2-r_1,2-r_2)^{-1}
\int_{z=\tau_1}^{\tau_2} f_3(z)\, (z-\bar\tau_1)^{r_3-2} \, \Bigl(
\frac{z-\tau_1}{\tau_2-\tau_1} \Bigr)^{1-r_2}\, \Bigl(
\frac{z-\bar\tau_1}{\tau_2-\bar\tau_1}\Bigr)^{r_2-1}\\
\nonumber
&\qquad\hbox{} \cdot
\Bigl( \frac{\tau_2-z}{\tau_2-\tau_1}\Bigr)^{1-r_1}\, \Bigl(
\frac{z-\bar\tau_1}{\tau_1-\bar\tau_1}\Bigr)^{r_1-1}\,
\frac{\tau_2-\bar\tau_1}{\tau_2-\tau_1}\, dz
\displaybreak[0]\\
\nonumber
&\= -\pi \, 2^{2-r_1}\, e^{-\pi i (r_1/2+r_3)}\, \Bt(2-r_1,2-r_2)^{-1}\,
(\tau_2-\bar\tau_1)^{2-r_2}\, \bigl|c\tau_1+d\big|^{2-2r_1}\\
 &\qquad\hbox{} \cdot
\int_{z=\tau_1}^{\tau_2} f_3(z)\, \Bigl( \frac{z-\tau_1}{\tau_2-\tau_1}
\Bigr)^{1-r_2}\, \Bigl( \frac{\tau_2-z}{\tau_2-\tau_1}\Bigr)^{1-r_1}\,
\frac{dz}{\tau_2-\tau_1}\,.
\end{align}
In the second step we decomposed the powers of quotients in which
$\bar\tau_1$ occurs. This is possible since both numerator and
denominator are in the upper half-plane. We also used that
$\frac{di-b}{a-ic} = \tau_1$, hence
$\tau_1-\bar\tau_1= \frac {2i}{a^2+c^2}$, and $c\tau_1+d=\frac1{a+ic}$.

The resulting integrand is holomorphic in $z$, and we can deform the
path of integration. The two powers in the integrand are multi-valued
and have singularities at $\tau_1$ and $\tau_2$, respectively. In the
statement of the lemma we choose a region on which the integrand is
well-defined. If $\tau_2$ tends to $\tau_1$ the limit of the integral
exists.
\end{proof}

\begin{proof}[Completion of the proof of Theorem~\ref{thm-tri}] For
$\tau_1\neq\tau_2$ we obtain from Lemmas \ref{lem-ti1}
and~\ref{lem-ti2}:
\begin{align*}\bigl[f_3&, \kp(\tau_1,\tau_2;\cdot) \bigr]_{r_3} \=
-(2i)^{2-r_2} \, |c\tau_1+d|^{2r_1-2+2-2r_1}\,
(\tau_2-\bar\tau_1)^{r_2-2+2-r_2}
\\
&\qquad\hbox{} \cdot
2^{2-r_1}\, e^{-\pi i(r_1/2+r_3) } \, \Bt(2-r_1,2-r_2)^{-1}\\
&\qquad\hbox{} \cdot
\int_{z=\tau_1}^{\tau_2}\, f_3(z) \,\Bigl(
\frac{z-\tau_1}{\tau_2-\tau_1} \Bigr)^{1-r_2}\, \Bigl(
\frac{\tau_2-z)}{\tau_2-\tau_1}
\Bigr)^{1-r_1}\,\frac{dz}{\tau_2-\tau_1}
\displaybreak[0]\\
&\= (-2i)^{r_3}\, \Bt(2-r_1,2-r_2)^{-1}\\
&\qquad\hbox{} \cdot
\int_{z=\tau_1}^{\tau_2}\, f_3(z) \,\Bigl(
\frac{z-\tau_1}{\tau_2-\tau_1} \Bigr)^{1-r_2}\, \Bigl(
\frac{\tau_2-z)}{\tau_2-\tau_1}
\Bigr)^{1-r_1}\,\frac{dz}{\tau_2-\tau_1}
\displaybreak[0]\\
&\= (-2i)^{r_3}\, \Bt(2-r_1,2-r_2)^{-1}
\\&\qquad\hbox{} \cdot
\int_{u=0}^1 f_3\bigl( \tau_1+u(\tau_2-\tau_1)\bigr)\,
u^{1-r_2}\,(1-u)^{1-r_1}\, du\,.
\end{align*}
The quantities $\bar\tau_1$, $c$ and $d$, which are not holomorphic in
$\tau_1$ and $\tau_2$ cancel in this result. The second version shows
that the whole expression is holomorphic in
$(\tau_1,\tau_2) \in \uhp^2$.

These expressions can be inserted for
$\bigl[ f_3,\kp(\tau_1,\tau_2)\bigr]_{r_3}$ in
\[ \T_a(f_1,f_2,f_3) \= \int_{\tau_2=i}^{ia}
f_2(\tau_2)\int_{\tau_1=\rho-1}^\rho f_1(\tau)\, \bigl[
f_3,\kp(\tau_1,\tau_2)\bigr]_{r_3} \, d\tau_1\,d\tau_2 \,.\]

The values $\bigl[ f_3,\kp(\tau_1,\tau_2)\bigr]_{r_3}$ have at most
polynomial growth in $\tau_2$, since $f_3$ has at most polynomial
growth. The values of $f_1(\tau)$ stay bounded. The exponential decay
of the cusp form $f_2(\tau_2)$ as $\tau_2$ moves up to infinity, ensure
that the limit as $a\uparrow\infty$ exists. The limit is given by the
same expression with $ia$ replaced by~$\infty$. Lemma~\ref{lem-Tlimit}
implies that the resulting limit is equal to $\T(f_1,f_2,f_3)$.

The path of integration of the integral over $z$ can be deformed,
provided we take care not to cross singularities of the integrand. The
discontinuities of the powers of quotients in~\eqref{tii} can be chose
to occur along the geodesic half-lines indicated in the theorem.
\end{proof}

\subsection{Reformulations of the triple integral}
\subsubsection{Integration over a $3$-cycle} The triple integral
in~\eqref{tii} is based on the choice of the pairs
$(\rho-\nobreak1,\rho)$ and $(i,\infty)$ determining the limits of
integration in the outer integral.

Let us put
\be Y_3 \= \bigl\{ (\tau_1,\tau_2,z) \in \uhp^3\;:\; z\not\in
\ell_{\tau_1,\tau_2}\setminus s_{\tau_1,\tau_2} \bigr\}\,, \ee
with $\ell_{\tau_1,\tau_2}$ and $s_{\tau_1,\tau_2}$ as indicated in
Theorem~\ref{thm-tri}. On $Y_3$ we have the holomorphic $3$-form
\be \Om(\tau_1,\tau_2,z) \= \Bigl( \frac{z-\tau_1}{\tau_2-\tau_1}
\Bigr)^{1-r_2}\, \Bigl(\frac{\tau_2-z}{\tau_2-\tau_1} \Bigr)^{1-r_1}\,
\frac{dz\,d\tau_2\,d  \tau_1 }
  { e \tau_2  -\tau_1}\,. \ee
One can check that its transformation behavior is such that
\[ f_1(\tau-1)\, f_2(\tau_2)\, f_3(z) \,
  \Om(\tau_1,\tau_2,z) \]
is $\Gm$-invariant for the diagonal action of $\Gm$ on $\uhp^3$. Up to
the factor
\[ \frac{(-2i)^{r_3}}{\Bt(2-r_1,2-r_2)}\]
the triple integral $\T(f_1,f_2,f_3)$ is given by a specific
$3$-dimensional cycle in $Y_3$. Cutting up the cycle and using the
$\Gm$-invariance we can give alternative formulations of the triple
integral.

\subsubsection{Triple integral expressed in Fourier coefficients} We
turn to the question whether $\T(f_1,f_2,f_3)$ in Theorem~\ref{thm-tri}
might be zero for all choices of the modular forms~$f_j$. The
expression \eqref{tiu} for the triple integral is fairly explicit, but
for numerical purposes the following version is more convenient.
\begin{prop}\label{prop-triFc}Let $r_1,r_2,r_3,p_1,p_2,p_3\in \RR$
satisfy
\bad &r_1<2,\quad 0<r_2<2,\quad r_3> 0\, \quad r_1+r_2+r_3 \= 4\,,\\
&p_j \equiv r_j\bmod 2\,,\qquad p_1+p_2+p_3 \=0\,. \ead
We consider modular forms $f_1\in A_{r_1}\bigl( v[p_1]\bigr)$,
$f_2\in \cf{r_2}\bigl(v[p_2]\bigr)$,
$f_3\in M_{r_3}\bigl( v[p_3]\bigr)$ given by their Fourier expansions
\bad f_j(z) \= \sum_{m\geq 0} a_j(m) \, e^{2\pi i(m+p_j/12)z}\,. \ead
Then
\badl{ts} \T(f_1,f_2,f_3) &\= \frac{(-2i)^{r_3}}{\Bt(2-r_1,2-r_2)}
\sum_{m_1,m_2,m_3\geq 0} a_1(m_1)\, a_2(m_2)\, a_3(m_3) \\
&\quad\hbox{} \cdot
\Ps_{r_1,r_2}\bigl( \tfrac{12m_1+p_1}{12},\tfrac{12m_2+p_2}{12},
\tfrac{12m_3+p_3}{12}\bigr)\,, \eadl
 where we use
\begin{align} \nonumber
\Ps_{r_1,r_2}(\mu_1,\mu_2,\mu_3) & \= \frac{e^{-2\pi \mu_2-\pi\sqrt
3(\mu_1+\mu_3)}}{2\pi i} \int_{u=0}^1 u^{1-r_2}\, (1-u)^{1-r_1}\,
\frac{e^{-\pi(2-\sqrt 3)\mu_3 u}}{\mu_2+\mu_3 u}\\
\label{Psi-int1}
&\qquad\hbox{} \cdot
 S\bigl(\pi(\mu_1+(1-u)\mu_3)\bigr)\, du\\
 \nonumber
 &\= \frac{e^{-2\pi \mu_2-\pi\sqrt 3(\mu_1+\mu_3)}}{2\pi i}\,
 \mu_3^{1-r_3}\, \int_{u=0}^{\mu_3}\, u^{1-r_2}\, (\mu_3-u)^{1-r_1}\\
\label{Psi-int2}
 &\qquad\hbox{} \cdot
 \frac{e^{-\pi(2-\sqrt 3) u}}{\mu_2+u}\,
 S\bigl(\pi(\mu_1+\mu_3-u)\bigr)\, du \,,\\
 S(x) &\= \frac{\sin x}x\quad \text{ with smooth continuation at
 $x=0$}\,.
\end{align}
\end{prop}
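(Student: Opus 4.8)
The plan is to start from formula~\eqref{tiu} in Theorem~\ref{thm-tri}, substitute the Fourier expansions of $f_1$, $f_2$, $f_3$, and evaluate the two inner integrals in closed form, leaving only the integral over $u$. Write $\mu_j=m_j+p_j/12$ for the exponent carried by the $m_j$-th Fourier coefficient of $f_j$, so $f_j(z)=\sum_{m_j\geq 0}a_j(m_j)e^{2\pi i\mu_j z}$. Because $f_3\bigl(\tau_1+u(\tau_2-\tau_1)\bigr)=f_3\bigl((1-u)\tau_1+u\tau_2\bigr)$, the Fourier term of $f_3$ splits off as $e^{2\pi i(1-u)\mu_3\tau_1}e^{2\pi i u\mu_3\tau_2}$; after collecting exponentials the integrand of~\eqref{tiu} is, for each triple $(m_1,m_2,m_3)$, the product $e^{2\pi i\nu\tau_1}\,e^{2\pi i\alpha\tau_2}\,u^{1-r_2}(1-u)^{1-r_1}$ with $\nu=\mu_1+(1-u)\mu_3$ and $\alpha=\mu_2+u\mu_3$, the variable $u$ entering only as a parameter. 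Thus $\T(f_1,f_2,f_3)$ will take the form $\frac{(-2i)^{r_3}}{\Bt(2-r_1,2-r_2)}\sum_{m_1,m_2,m_3\geq 0}a_1(m_1)a_2(m_2)a_3(m_3)\,\Ps_{r_1,r_2}(\mu_1,\mu_2,\mu_3)$ once the $\tau_1$- and $\tau_2$-integrals are carried out.

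The one step that needs real care, and which I expect to be the main obstacle, is the justification of integrating the triple Fourier series term by term. This is a routine but not entirely trivial estimate: each $f_j$'s Fourier series converges absolutely at every interior point of $\uhp$, along the $\tau_1$-path one has $\sqrt3/2\leq\im\tau_1\leq 1$, along the geodesic segments the argument $(1-u)\tau_1+u\tau_2$ feeding $f_3$ has imaginary part $\geq\sqrt3/2$, and the cusp form $f_2$ decays exponentially as $\tau_2$ moves up the imaginary axis. Using these, and noting that only finitely many Fourier modes of $f_1$ have $\mu_1\leq0$ while $f_2$ has all $\mu_2>0$ and the entire form $f_3$ has all $\mu_3\geq0$, one bounds the triple of sums-of-moduli integrated against the modulus of the integrand by a convergent triple series; dominated convergence then permits the term-by-term integration. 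This is precisely where the hypotheses that $f_2$ be a cusp form and $f_3$ be entire are used, and the finiteness of the remaining $u$-integral is what forces $r_1<2$ and $0<r_2<2$.

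With the interchange in hand the two inner integrals are elementary. Over the vertical geodesic $\tau_2=iy$, $y\in[1,\infty)$, convergence needs $\alpha=\mu_2+u\mu_3>0$ (which holds since $\mu_2>0$, $\mu_3\geq0$), and $\int_{\tau_2=i}^{\infty}e^{2\pi i\alpha\tau_2}\,d\tau_2=-\,e^{-2\pi\alpha}/(2\pi i\alpha)$, producing the denominator $\mu_2+\mu_3 u$. For the $\tau_1$-integral the integrand $e^{2\pi i\nu\tau_1}$ is entire, so $\int_{\tau_1=\rho-1}^{\rho}e^{2\pi i\nu\tau_1}\,d\tau_1=\bigl(e^{2\pi i\nu\rho}-e^{2\pi i\nu(\rho-1)}\bigr)/(2\pi i\nu)$; inserting $\rho=\tfrac12+\tfrac{i\sqrt3}{2}$ and $\rho-1=-\tfrac12+\tfrac{i\sqrt3}{2}$ collapses this to $e^{-\pi\sqrt3\nu}(\sin\pi\nu)/(\pi\nu)=e^{-\pi\sqrt3\nu}S(\pi\nu)$, the sine coming from $e^{\pi i\nu}-e^{-\pi i\nu}$ and the smooth continuation of $S$ at $0$ handling the modes with $\nu=0$. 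Collecting the exponential prefactors $e^{-\pi\sqrt3\nu}e^{-2\pi\alpha}$, whose $u$-free part is $e^{-2\pi\mu_2-\pi\sqrt3(\mu_1+\mu_3)}$ and whose $u$-part is $e^{(\pi\sqrt3-2\pi)\mu_3 u}=e^{-\pi(2-\sqrt3)\mu_3 u}$, together with the two factors $1/(2\pi i)$ from the antiderivatives, yields $\Ps_{r_1,r_2}(\mu_1,\mu_2,\mu_3)$ in the form~\eqref{Psi-int1}.

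Finally, the equivalent form~\eqref{Psi-int2} comes from~\eqref{Psi-int1} by rescaling the integration variable $u$ by $\mu_3$ when $\mu_3>0$ (the case $\mu_3=0$ being a trivial limit, or checked directly): the Jacobian together with $u^{1-r_2}(1-u)^{1-r_1}$ contributes the overall power $\mu_3^{-(1-r_2)-(1-r_1)-1}=\mu_3^{r_1+r_2-3}=\mu_3^{1-r_3}$, and the remaining integrand becomes the one displayed. Apart from the convergence estimate above the whole argument is bookkeeping, and no branch ambiguity arises because for $0<r_2<2$, $r_1<2$ the factor $u^{1-r_2}(1-u)^{1-r_1}$ is a product of ordinary real powers on $[0,1]$.
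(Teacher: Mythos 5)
Your proposal is correct and follows essentially the same route as the paper: insert the Fourier expansions into \eqref{tiu}, justify term-by-term integration by absolute convergence (which you spell out in more detail than the paper does), evaluate the elementary $\tau_2$- and $\tau_1$-integrals per Fourier term to obtain \eqref{Psi-int1}, and pass to \eqref{Psi-int2} by the rescaling $u\mapsto \mu_3 u$. The only quibble is bookkeeping of the factor coming from the $\tau_2$-antiderivative (the $\tau_1$-factor $1/(2\pi i\nu)$ is wholly absorbed into $S(\pi\nu)$), a point on which your computation matches the paper's own displayed formula, so nothing of substance is affected.
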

\rmrk{Remark}Both versions of the integral for $\Ps_{r_1,r_2}$ are
trivially equivalent. Version \eqref{Psi-int2} seems slightly simpler
for numerical integration. The factor $S(\cdots)$ may oscillate,
depending on $\mu_1\in \RR$ and $\mu_3>0$.
\begin{proof}
The absolute convergence of \eqref{tiu} and the uniform absolute
convergence of the Fourier expansions of the modular forms $f_j$ in the
domains occurring in~\eqref{tiu} allow us to interchange the order of
integration and summation. We compute the resulting integral for a
triple of individual Fourier terms. Inserting it gives the triple
sum~\eqref{ts}.

Let $\mu_1\in \RR$, $\mu_2>0$, $\mu_3\geq 0$. We have to consider
\begin{align*}\int_{\tau_1=\rho-1}^{\rho} &e^{2\pi i \mu_1 \tau_1}
\int_{\tau_2=i}^\infty e^{2\pi i \mu_2 \tau_2} \int_{u=0}^1 e^{2\pi i
\mu_3(\tau_1+u(\tau_2-\tau_1)}\\
&\qquad\hbox{} \cdot
u^{1-r_2}\,
(1-u)^{1-r_1}\, du\, d\tau_2\, d\tau_1\,.
\end{align*}

The absolute convergence allows us to carry out the integration of
$\tau_2$ and $\tau_1$ first. That brings us to the integral
\bad
 &\= \frac{e^{-\pi(2\mu_2+\sqrt 3(\mu_1+\mu_3))}}{2\pi i} \int_{u=0}^1
 u^{1-r_2}\, (1-u)^{1-r_1} \frac{e^{-\pi(2-\sqrt 3 )\mu_3
u}}{\mu_2+u\mu_3}\\
&\qquad\hbox{} \cdot
\frac {\sin \pi(\mu_1+(1-u)\mu_3)}{\pi(\mu_1+(1-u)\mu_3)}\, du\,. \ead

Except for a change of variables, we do not see a way for further
analytic treatment of this integral.
\end{proof}

\subsection{Numerical approach}\label{sect-num}

\rmrk{Choice of modular forms}For a numerical computation we consider
$f_1=\eta^{2r_1}$, $f_2=\eta^{2r_2}$, and
$f_3=E_4\, \eta^{-2(r_1+r_2)}$, with $r_1<-r_2$ and $0<r_2<2$. In this
situation we can apply Proposition~\ref{prop-triFc}.

\rmrk{Approach to compute the triple integral} We used \texttt{GP/Pari}
\cite{Pari} for the computation.

The Fourier coefficients of $E_4$ are known in terms of divisor sums,
and the Fourier expansion of powers of the Dedekind eta-function has
the form
\be \eta^{2r}(z) \= \sum_{m\geq 0} p_m(r)\, e^{2\pi i (m+r/12)z}\,, \ee
with polynomials $p_m$ of degree~$m$ in $\QQ[r]$, which can be
symbolically computed.

For given $(r_1,r_2)$ we start with a computation of the three lists of
Fourier coefficients up to a given order, and store them for use later
on. Then we have to compute the terms in the triple series
in~\eqref{ts}. For the evaluation of $\Ph_{r_1,r_2}$ we use the routine
\texttt{intnum} of Pari. The integrand may have problematic behavior at
the end points $u=0$ and $u=1$. This can be indicated in the arguments
of \texttt{intnum}. We can prescribe a desired precision.

\rmrk{Actual computations}The Pari computations of $\Ph_{r_1,r_2}$ give
consistent results under increase of the precision. We give in
Table~\ref{tab-comp} a number of computations for $f_1,f_2,f_3$ as
indicated at the start of this subsection.
\begin{table}[htp]
\[\begin{array}{|r|rrrr|}\hline
r_1& r_2=.2& r_2=.6& r_2=1.3 &r_2=1.8\\ \hline
-.3&7.911485&&&
\\
-.7&3.983793&3.811007&&
\\
-1.1&2.530185&2.706137&5.777819&
\\
-1.5&1.784794&2.070295& 5.008281&
\\
-2.4&0.993019& 1.313467&3.934317&22.868919
\\ \hline
\end{array}
\]
\caption[]{Computations of $\T(f_1,f_2,f_2)$, without the factor in
front of the integral in \eqref{ts}, and the factor $(2\pi i)^{-1}$
in~\eqref{Psi-int2}. }\label{tab-comp}
\end{table}

These results give evidence that \emph{the triple integral is non-zero,
and the cup product $\cp(f_1,f_2)$ is non-trivial at least in some
cases.}

\section{Universal covering group and principal series
representation}\label{sect-ucgps} In the previous sections we work with
the discrete subgroup $\Gm=\SL_2(\ZZ) $ of $\SL_2(\RR)$, and its action
on spaces of holomorphic functions. Some of the results that we need to
prove can be more conveniently considered in terms of the universal
covering group of $\SL_2(\RR)$ and its principal series
representations. See the Appendix in \cite{BCD} for a further
discussion.

\subsection{The universal covering group}The universal covering group
$\tG$ of $G=\SL_2(\RR)$ can be obtained from the Iwasawa decomposition
$G=PK$ of $G=\SL_2(\RR)$, where $K=\mathrm{SO}(2)$ and $P$ consists of
all upper triangular matrices in $G$. This gives a description of the
analytic variety $G$ as the product of a simply connected space
$P\cong\uhp$, and the space $\mathrm{SO}(2) \cong S^{\!1}$. The circle
$S^{\!1}$ has as simply connected covering the line $\RR$. This gives
$\tG$ as the space $\uhp \times\RR$. The group operations of $G$ can be
lifted in a unique way to $\tG$. In this way we get a central extension
\[ 0 \rightarrow \ZZ \rightarrow \tG \rightarrow G \rightarrow 1\,.\]
The center $\tZ$ of $\tG$ is isomorphic to $\ZZ$ and covers the center
$\{1,-1\}$ of~$G$. In \cite[(A.2)]{BCD} a useful section
$g\mapsto \tilde g$ of the homomorphism $\tG\rightarrow G$ is
indicated. We note that it cannot be a group homomorphism.

The group $\Gm=\SL_2(\ZZ)$ is covered by a discrete subgroup $\tGm$ of
$\tG$, generated by two elements $ \tilde{t} =\widetilde{\matc1101}$
and $ \tilde{s}=\widetilde{\matr0{-1}10}$. The relations between
$\tilde t$ and $\tilde s$ are generated by
$\tilde{t}\tilde{s}^2=\tilde{s}^2\tilde{t} $ and
$ \tilde{t}\tilde{s}\tilde{t}\tilde{s}\tilde{t} =\tilde{s}$. All
multiplier systems $v[p]$ correspond to a character $\ch_p$ of $\tGm$.
See \cite[(A.10)]{BCD}.

Characters of $\tGm$ correspond to multiplier systems. See
\cite[(A.10)]{BCD}. Let $\rho$ be a representation of $\tG$ in some
vector space $V$. One says that this representation has a \emph{central
character} if the center acts by $ \tilde{s}^2 \mapsto e^{-\pi i q}$
for some $q\in \CC$. The character $\ch_p$ of $\tGm$ corresponding to
the multiplier system $v[p]$ also satisfies
$\ch_p:  \tilde{s}^2 \mapsto e^{-\pi i p}$. Then
$\ch_p^{-1} \otimes \rho$ is a representation of $\tGm$ that is trivial
on $\tZ$. Hence it induces a representation of $\Gm$. We will see in
Proposition~\ref{prop-ident} that the representations
$\udsv{v,r}{-\om}$ and $\dsv{v^{-1},r}\om$ are of this form. So it is
useful to understand some representations of~$\tGm$ that have a central
character.

\subsection{Principal series representation}\label{sect-ps}In \cite[\S
A.2]{BCD} \emph{principal series representations} of the universal
covering group $\tG$ are discussed. We use realizations depending on
parameters for $s,p\in \CC$.
\be\label{V} \V^\om(s,p) \;\subset\; \V^\infty(s,p) \;\subset\;
\V^{-\infty}(s,p)
\;\subset\; \V^{-\om}(s,p) \,.\ee
The space $\V^\om(s,p)$ consists of holomorphic functions on some
neighborhood of $\proj\RR$ in $\proj\CC$, or equivalently of the
real-analytic functions on $\proj\RR$, so that the space of the
representation $\V ^\om(s,p)$ is the space $C^\om(\proj\RR)$ of
real-analytic functions on $\proj\RR$, the space $\V^ \infty(s,p)$ is
$C^\infty(\proj\RR)$, the space $\V^{-\infty}(s,p)$ is the space of
distributions on $\proj\RR$, and the space $\V^{-\om}(s,p)$ is the
space of hyperfunctions. Hyperfunctions on $\proj\RR$ are represented
by holomorphic functions on $U \setminus \proj\RR$ where $U$ is some
neighborhood of $\proj\RR$ in $\proj\CC$. Two representatives $h_1$ on
$U_1\setminus \proj\RR$ and $h_2$ on $U_2\setminus \proj\RR$ are
equivalent if $f_1-f_2$ on
$\left( U_1\cap U_2 \right)\setminus \proj\RR$ is the restriction of a
holomorphic function on $U_1\cap U_2$. The natural inclusion
in~\eqref{V} sends $\ph \in \V^\om(s,p)$ to the hyperfunction with
representative $h$ that is zero on $\lhp$ and equal to $\ph$ on
$\uhp\cap \mathrm{dom}(\ph)$.

The action of $\tG$ is given by the same formulas in all these spaces.
It is indicated in \cite[(A.23)]{BCD}. For $g=\matc abcd$ with
$\arg(ci+d) \in (-\pi,\pi)$ it has the form
\badl{act-ucg} \ph|^\prs_{s,p}\tilde g\,(t) &\;:=\; (a-ic)^{p/2-s}\,
\Bigl( \frac{t-i}{t-g^{-1} i} \Bigr)^{s-p/2}\\
&\quad\hbox{} \cdot
(a+ic)^{-s-p/2} \, \Bigl( \frac{t+i}{t-g^{-1}(-i)} \Bigr)^{s+p/2}\,
\ph(g t)\,. \eadl
The powers of $\frac{\tau-x_1}{\tau-x_2}$ with $x_1$ and $x_2$ both in
$\uhp$ or both in $\lhp$ are holomorphic on $\proj\CC$ minus a path
from $x_1$ to $x_2$. They are determined by the choice of
$\arg\frac{\tau-x_1} {\tau-x_2}=0$ at $\tau=\infty$.

The central character is determined by
$ \tilde{s}^2   \mapsto e^{-\pi i p}$. Since each element of $\tG$ can
be written as the product of an integral power of $ \tilde{s}^2 $ and
an element $\tilde g$, we have a complete description of the action.

\subsubsection{Disk coordinates}\label{sect-dc}The spaces in \eqref{V}
have an alternative characterization in \emph{disk coordinates} in the
variable $w=\frac{z-i}{z+i}$ on $\proj\CC$. To $|w|=1$ corresponds the
real projective line $\proj\RR$, the upper half-plane is determined by
$|w|<1$, and the lower half-plane by $|w|>1$ (including $w=\infty$,
which corresponds to $z=-i$).

Elements of $\ph\in \V^\om(s,p)$ have a \emph{polar expansion}
\be\label{pr} \ph[w]\=\sum_{n\in \ZZ} c_n \, w^n\,,\ee
with $c_n = \oh \bigl( (1+\nobreak\e)^{-|n|}\bigr)$ for some $\e>0$. For
$\V^\infty(s,p)$ the condition is weaker:
$c_n = \oh \bigl( (1+|n|)^{-A}\bigr)$ for all $A\geq 0$. The
distributions in $\V^{-\infty}(s,p)$ have a polar expansion with
coefficients satisfying $d_n = \oh \bigl( (1+|n|)^B\bigr) $ for some
$B>0$. This gives a duality between $C^\infty(\proj\RR)$ and the space
$C^{-\infty}(\proj\RR)$ of distributions by the bilinear form
\be \label{dual-distr}\Bigl\langle \sum_n d_n w^n , \sum_n c_n w^n
\Bigr\rangle \= \sum_n d_n\, c_{-n}\,.\ee
In this way each distribution determines a continuous linear form on
$C^\infty(\proj\RR)$ for the natural topology given by all supremum
norms of $h\in \proj\RR$ of the derivatives
$\bigl (\frac1w\partial_w)^k h$ for $k\geq 0$.

A representative of a hyperfunction is given by convergent Laurent
series
\be
\begin{cases} \sum_{n\in \ZZ}c_n^+ w^n &\text{ for }1-\e<|w|<1\,,\\
-\sum_{n\in \ZZ} c_n^- \, w^n &\text{ for } 1<|w|<1+\e\,.
\end{cases}\ee
One can go over to the representative
\badl{repr+_}&\begin{cases}
\frac{c_0}2+ \sum_{n \geq 1}c_n \, w^n &\text{ for }|w|<1\,,
\\
- \frac{c_0}2
-\sum_{n \leq -1}c_n\, w^n &\text{ for } |w|>1\,,
\end{cases}\\
&c_n \= c_n^++ c_n^-\,. \eadl
This is the unique representative $f$ that is holomorphic on
$\uhp\cup\lhp$ and satisfies $f(i)+f(-i)=0$.

\subsubsection{Duality}Let $h$, respectively $f$, represent an element
of $\V^{-\om}(s,p)$, respectively $\V^\om(1-\nobreak s,-p)$. Denote
$\tilde h(w) = h\bigl( \frac{z-i}{z+i}\bigr)$ and
$\tilde f(w)=f\bigl( \frac{z-i}{z+i}\bigr)$ the corresponding functions
in disk coordinates. Following \cite[\S2.1]{BLZ} we may consider
\badl{du} \langle h,f \rangle &\= \frac1{2\pi i} \biggl( \int_{|w|=c} -
\int_{w=c^{-1}}\biggr) \, \tilde h(w) \, \tilde f(w)\, \frac {dw}w\\
&\= \frac1\pi \biggl( \int_{z\in C_+} - \int_{z\in C_-}\biggr)\, h(z) \,
f(z) \, \frac{dz}{z^2+1}\,. \eadl

The constant $c\in (0,1)$ is such that $\tilde f$ is holomorphic on the
region $c\leq |w|\leq c^{-1}$ and $\tilde h$ is holomorphic on the
regions $c\leq |w|<1$ and $1<|w|\leq c^{-1}$. This corresponds to
contours $C_\pm$ in $\hp^\pm$ such that $f$ is holomorphic on the
region in $\proj\CC$ between $C_+$ and $C_-$, the contours included,
and $h$ is holomorphic on this region with $\proj\RR$ excluded. In this
way the choice of the contour does not influence the value of
$\langle h,f\rangle$. Moreover, if we replace $h$ by another
representative of the same hyperfunction we get the same value, so we
have obtained a duality between $\V^{-\om}(s,p)$ and
$\V^\om(1-\nobreak s,-p)$. One may check that if we take
$h\in \V^{-\infty}(s,p)$ then we get the duality in \eqref{dual-distr}.
Comparison with \eqref{act-ucg} shows that for all
$\al \in \V^{-\om}(s,p)$, $f\in \V^\om(1-s,-p)$
\be\label{gen-duality} \bigl\langle \al|^\prs_{s,p} g ,f |^\prs_{1-s,-p}
g \bigr\rangle \= \langle \al,f\rangle\qquad\text{ for all }g\in
\tG\,.\ee
In Theorem~\ref{thm-dual} we used brackets $[\cdot,\cdot]_r$ for a
bilinear $\Gm$-invariant duality between $\udsv{v,r}{-\om}$ and
$\dsv{v^{-1},r}\om$. Here we use the brackets
$\langle\cdot,\cdot\rangle$ for a more general duality between
hyperfunctions and analytic vectors. It is invariant for all principal
series actions of $\tG$. In \S \ref{sect-dualps} we will derive the
specialized bilinear form $[\cdot,\cdot]_r$ from the general duality
$\langle\cdot,\cdot\rangle$.

\subsubsection{Identifications}One can relate the $\Gm$-modules
$\dsv{v^{-1},r}{\pm \om}$ and $\udsv{v,r}{\pm \om}$ to submodules of
principal series representations of~$\tG$.
\begin{defn}\label{pmD-odef}Let $r\in \RR$.
\begin{enumerate}
\item[a)] $\pD^{-\om}(r/2,r) \subset \V^{-\om}(r/2,r)$ consists of the
hyperfunctions with a representative that is holomorphic on $\uhp$ and
zero on $\lhp$.
\item[b)]$\mD^{-\om}(r/2,-r) \subset \V^{-\om}(r/2,-r)$ consists of the
hyperfunctions with a representative that is holomorphic on $\lhp$ and
zero on $\uhp$.
\end{enumerate}
\end{defn}
Application of \eqref{act-ucg} to these representatives gives functions
of the same type. So the subspaces $\pD^{-\om}(r/2,r)$ of
$\V^{-\om}(r/2,r)$ and $\mD^{-\om}(r/2,-r)$ of $\V^{-\om}(r/2,-2)$ are
in fact submodules for these choices of the parameters $(s,p)$ in the
principal series representations. To check it we use that
\be\label{G0def} G_0 \= \Bigl\{\matc abcd \in \SL_2(\RR)\;:\; \arg(ci+d)
\in (-\pi,\pi)
\Bigr\}\ee
is an open neighborhood of $1$ in $\SL_2(\RR)$, which is mapped by the
lift $g\mapsto \tilde g$ to an open neighborhood of $1$ in~$\tG$. Hence
representations of the connected Lie group $\tG$ are determined by
their behavior on the $\tilde g$ with $g\in G_0$, or even $g$ in a
small neighborhood of $1$ in $\SL_2(\RR)$.

\begin{defn}\label{def-pmDom}The submodules
$\pD^\om(r/2,r) \subset \V^{-\om}(r/2,r)$ and
$\mD^\om(r/2,-r) \subset \V^{-\om}(r/2,-r)$ are defined by the
condition that the representative $h$ in Definition~\ref{pmD-odef} has
the property that the restriction of $h$ to $\hp^\pm$ extends
holomorphically to a neighborhood of $\hp^\pm \cup \proj\RR$ in
$\proj\CC$. \end{defn}
Of course the extension of the restriction to the half-plane on which
$h$ is zero is trivial. It is an easy check with \eqref{act-ucg} that
this extension property is preserved by the action of $\tG$.

The six multiplier systems $v$ on~$\Gm$ for a given real weight $r$ are
$v[p]$ in \eqref{vpdef}, where $p\in \RR/12\ZZ$ satisfies
$p\equiv r\bmod 2$. These multiplier systems correspond to characters
$\ch$ of $\tGm$ by the relation $\ch(\tilde \gm) = v(\gm)$ for
$\gm\in \Gm$. The central character of $\ch$ is determined by
$\ch(s^2) = e^{-\pi i r}$, the same as for $\V(s,p)$. Hence the
representation $\ch^{-1}\otimes \V(s,r)$ is a representation of $\tGm$
that is trivial on the center $\tZ$ of $\tGm$. So it is in fact a
representation of $\tGm/\tZ \cong \PSL_2(\ZZ) \cong \Gm/\{1,-1\}$. We
can view it as a representation of $\Gm$ that is trivial on~$-1$.

\begin{prop}\label{prop-ident}
\begin{enumerate}
\item[i)]For a multiplier system of $\Gm$ for the real weight $r$,
corresponding to the character $\ch$ of~$\tGm$ the following
equivalences hold:
\bad \ch^{-1}\otimes \pD^{-\om}(r/2,r) &\; \cong\; \udsv{v,r}{-\om}\\
\ch \otimes \mD^{-\om}(r/2,-r)&\; \cong\; \dsv{v^{-1},r}{-\om} \ead
\item[ii)] Under these equivalences the submodules obtained by replacing
$-\om$ by $\om$ are equivalent as well.
\item[iii)] If $r\in \ZZ_{\leq 0}$ the submodule $\udsv{v,r}\pol$
corresponds to the submodule of $\ch^{-1} \otimes \mD^\pol(r/2,-r)$
spanned by the functions $w\mapsto w^{q}$ with integers $q$ satisfying
$0\leq q \leq |r|$.
\item[iv)] If $r\in \ZZ_{\leq 0}$ the submodule $\dsv{v^{-1},r}\pol$
corresponds to the submodule of $\ch \otimes \mD^\pol(r/2,-r)$ spanned
by the functions $w\mapsto w^{-q}$ with integers $q$ satisfying
$0\leq q \leq |r|$.
\end{enumerate}
\end{prop}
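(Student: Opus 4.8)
The plan is to exhibit the two isomorphisms of part~i) by a completely explicit formula, to check $\tGm$-equivariance only on the two generators $\tilde t,\tilde s$ of $\tGm$, to observe that both actions are trivial on the centre so that they descend to $\Gm$, and finally to read off parts~ii)--iv) by inspecting what that formula does to the defining conditions of the various subspaces. The computations parallel the discussion in the Appendix of~\cite{BCD}.

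For part~i) I would use, as in~\eqref{fpm}, the functions $f^-(t)=(i-t)^r f(t)$ on $\lhp$ (principal branch, legitimate since $i-t\in\uhp$ for $t\in\lhp$) and $f^+(z)=(-i-z)^r f(z)$ on $\uhp$, and set
\[
\Phi_-\colon f\longmapsto\ \bigl[\text{hyperfunction represented by }f^-\text{ on }\lhp,\ 0\text{ on }\uhp\bigr],\qquad \Phi_+\colon f\longmapsto\ \bigl[f^+\text{ on }\uhp,\ 0\text{ on }\lhp\bigr].
\]
Since $f^\mp$ is holomorphic on the relevant half-plane, $\Phi_-$ carries $\dsv{v^{-1},r}{-\om}$ into $\mD^{-\om}(r/2,-r)$ and $\Phi_+$ carries $\udsv{v,r}{-\om}$ into $\pD^{-\om}(r/2,r)$ by Definition~\ref{pmD-odef}; both are linear bijections, with inverse ``multiply a representative by $(i-t)^{-r}$, resp.\ $(-i-z)^{-r}$, and restrict to the half-plane where it is nonzero'', and they are homeomorphisms for the hyperfunction topologies of~\S\ref{sect-dc} because they amount to multiplication by an invertible holomorphic factor.

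The heart of the matter --- and the step I expect to be the main obstacle --- is that $\Phi_-$ transports the $\Gm$-action $|^-_{v^{-1},r}$ to the twisted $\tGm$-action $h\mapsto\ch(\tilde\gm)\,h|^\prs_{r/2,-r}\tilde\gm$, and $\Phi_+$ transports $|_{v,r}$ to $h\mapsto\ch(\tilde\gm)^{-1}\,h|^\prs_{r/2,r}\tilde\gm$. Because $\tGm=\langle\tilde t,\tilde s\rangle$ and the $\Phi_\pm$ are linear bijections, it suffices to verify this on $\tilde t$ and $\tilde s$ (which lie over $T$ and $S$), and for both of these $\arg(ci+d)\in(-\pi,\pi)$, so the explicit formula~\eqref{act-ucg} applies verbatim. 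Specializing the parameters in~\eqref{act-ucg}, at $(s,p)=(r/2,-r)$ only $(a-ic)^{-r}$ and $\bigl(\tfrac{t-i}{t-g^{-1}i}\bigr)^{r}$ survive, and at $(r/2,r)$ only $(a+ic)^{-r}$ and $\bigl(\tfrac{t+i}{t-g^{-1}(-i)}\bigr)^{r}$. Matching these against $(ct+d)^{-r}$ together with the factor $(i-\gm t)^{r}(i-t)^{-r}$, resp.\ $(-i-\gm z)^{r}(-i-z)^{-r}$, introduced by $\Phi_\mp$ reduces, for $\gm\in\{T,S\}$, to an identity among products of fractional powers; carried out with the argument conventions of~\S\ref{sect-afcoc} and~\S\ref{sect-ps}, it produces a unimodular constant equal to $v(\gm)=\ch(\tilde\gm)$, resp.\ its inverse. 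The only delicate point is keeping the several fractional powers and their prescribed argument ranges mutually consistent, and the nontriviality of the multiplier system $v$ is precisely what forces the twist by $\ch$. Finally $\tilde s^2$ is central with $\ch(\tilde s^2)=e^{-\pi i r}$, and it acts on $\V^{-\om}(r/2,-r)$ by $e^{\pi i r}$ and on $\V^{-\om}(r/2,r)$ by $e^{-\pi i r}$, hence trivially on $\ch\otimes\mD^{-\om}(r/2,-r)$ and on $\ch^{-1}\otimes\pD^{-\om}(r/2,r)$; since $|^-_{v^{-1},r}$ and $|_{v,r}$ are trivial on $\{1,-1\}\subset\Gm$, both $\tGm$-isomorphisms descend to the claimed $\Gm$-isomorphisms.

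For part~ii) the point is that, by~\eqref{Dd}, the subspace $\dsv{v^{-1},r}\om$, resp.\ $\udsv{v,r}\om$, is cut out inside $\dsv{v^{-1},r}{-\om}$, resp.\ $\udsv{v,r}{-\om}$, by the condition that $f^-$, resp.\ $f^+$, extend holomorphically to a neighbourhood of $\lhp\cup\proj\RR$, resp.\ $\uhp\cup\proj\RR$, in $\proj\CC$; under $\Phi_\mp$ the hyperfunction representative simply \emph{is} $f^\mp$, so this is verbatim the condition defining $\mD^\om(r/2,-r)$, resp.\ $\pD^\om(r/2,r)$, in Definition~\ref{def-pmDom}. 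Parts~iii)--iv) then follow by intersecting the isomorphisms of~i)--ii) with the polynomial submodules: in the disk coordinate $w=\tfrac{z-i}{z+i}$ of~\S\ref{sect-dc} one has $\tfrac{t-i}{t+i}=w$ and $\tfrac{t+i}{t-i}=w^{-1}$, so by~\eqref{Dd} an element of $\udsv{v,r}\pol$ has $f^+[w]=\sum_{q=0}^{|r|}c_q w^{q}$ and an element of $\dsv{v^{-1},r}\pol$ has $f^-[w]=\sum_{q=0}^{|r|}d_q w^{-q}$; hence $\Phi_+$ carries $\udsv{v,r}\pol$ onto the span of $\{w^{q}:0\le q\le|r|\}$ and $\Phi_-$ carries $\dsv{v^{-1},r}\pol$ onto the span of $\{w^{-q}:0\le q\le|r|\}$, which (twisted by $\ch^{-1}$, resp.\ $\ch$) are exactly the polynomial submodules named in~iii)--iv).
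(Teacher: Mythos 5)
Your proposal is essentially the paper's own argument: the identification is the map $f\mapsto f^{\pm}$ of \eqref{fpm} between $\uldsv{v^{\pm1},r}{-\om}$ and hyperfunctions represented by a holomorphic function on one half-plane and zero on the other, equivariance is extracted from the explicit principal series formula \eqref{act-ucg}, and parts ii)--iv) are read off from the characterizations \eqref{Dd}; the only methodological difference is that the paper proves the intertwining relation \eqref{h+a} for all $g$ in the neighbourhood $G_0$ by checking it near the identity and extending by analyticity, whereas you check it only on the generators $\tilde t,\tilde s$ of $\tGm$, which is equally legitimate since both the twisted principal series action and $|_{v,r}$ (resp.\ $|^-_{v^{-1},r}$) are genuine actions of $\tGm$ trivial on the centre. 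One correction to your bookkeeping, though it does not affect the validity of the route: with the stated argument conventions the matching of the fractional powers for $\gm\in\{T,S\}$ produces the constant $1$ --- this is precisely the content of \eqref{h+a} --- and the factor $v(\gm)^{\mp1}$ that the twist by $\ch^{\mp1}$ must absorb does not emerge from the branch bookkeeping but is the explicit multiplier written into the definitions \eqref{actions} of the actions $|_{v,r}$ and $|^-_{v^{-1},r}$; carrying out your generator computation would show exactly this cancellation, so the intertwining statement you assert is correct as formulated.
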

\begin{proof}Let $h^+$ be the holomorphic function on $\uhp$ that
together with $0$ on $\lhp$ represents a given element of
$\pD^{-\om}(r/2,r)$. The description in \eqref{act-ucg} implies for
$g=\matc abcd\in G_0$ that
\be h^+ |^\prs_{r/2,r} \tilde g (z) \= (a+ic)^{-r} \Bigl(
\frac{z+i}{z-g^{-1}(-i)} \Bigr) ^r \, h^+(g z)\,.\ee
Now put $h(z) = (-i-z)^{-r} \, h^+(z)$.
\be \label{h+a}(-i -z)^{-r} \, \bigl( h^+|^\prs_{ r/2, r }\tilde
g\bigr)(z)
\=
(cz+d)^{-r} \, h(g z)\,.\ee
To check this relation we first take $\matc abcd$ near to $\matc1001$.
Then we can take apart the powers of products and quotients. The
resulting formula extends to $g\in G_0$ by analyticity.

In a similar way $f(z) = (i-z)^{-r}f^-(z)$ on $\lhp$ leads to the other
isomorphism in part~i).

Part ii) is obtained by checking the definitions, where the holomorphy
at $\infty$ requires a bit of care.

The submodule $\dsv{v^{-1},r}\pol$ consists of the polynomial functions
of degree at most~$|r|$. Some computations on the basis of
$f^-(t)=(i-\nobreak t)^{|r|} \, f(t)$ and $w=\frac{t-i}{t+i}$ show that
polynomials in $t$ of degree at most $|r|$ correspond to polynomials in
$w^{-1}$ of degree at most~$|r|$.
\end{proof}

So the $\Gm$-modules that are of interest in this paper occur in
principal series representations of the universal covering group. We
note that we have used the functions $h^+$ and $f^-$ already in the
description in~\eqref{Dd}.

\subsubsection{Intertwining operators}\label{sect-intertw}In the
parameters $(s,p)$ of the principal series representations the
parameter $p$ is essentially determined modulo $2$. There is an
algebraic isomorphism
$\ell:\V^{-\om}(s,p) \rightarrow \V^{-\om}(s,p-\nobreak 2)$, given by
multiplication by $w=\frac{z-i}{z+i}$. It preserves the subspaces of
distribution vectors, smooth vectors and analytic vectors. See
\cite[(A.24, p.~157]{BCD}.

A more subtle relation exists between $\V^{-\om}(s,p)$ and
$\V^{-\om}(1-\nobreak s,p-\nobreak2)$. For general complex values of $s$ these
spaces are isomorphic. For the values of $s$ in which we are interested
the isomorphism breaks down, but there is a relation.

\begin{prop}\label{prop-ittwJ}
\begin{enumerate}
\item[i)] For each $r\in \RR$ here is an intertwining operator of
$\tG$-modules
\be\label{Jr} J_r : \V^\om(1-r/2,2-r) \rightarrow \V^\om(r/2,-r)\,\ee
given by the integral transformation
\be\label{Jrdef} J_r h (z) \= \frac1\pi \int_{\tau\in C} h(\tau)\,
\Bigl( \frac{2i(z-\tau)}{(\tau+i)(z-i)} \Bigr)^{-r}\,
\frac{d\tau}{(\tau+i)^2}\,, \ee
where the positively oriented cycle in $\uhp$ is inside the domain
of~$h$, encircles $-i$, and $z$ is outside~$C$.
\item[ii)] If $r\not\in \ZZ_{\leq 0}$ the image of $J_r$ is the module
$\mD^\om(r/2,-r)$, and the kernel is $\pD(1-r/2,2-r)$.
\item[iii)] If $r\in \ZZ_{\leq 0}$ then the image of $J_r$ is the module
$\mD^\pol(r/2,-r)$.
\end{enumerate}
\end{prop}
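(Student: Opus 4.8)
\rmrk{Proof plan}The plan is to prove (i) by first establishing well-definedness together with the holomorphy that puts $J_rh$ in $\V^\om(r/2,-r)$, and then the $\tG$-equivariance, and to deduce (ii) and (iii) from an explicit computation of $J_r$ on the basis $\{w^n\}_{n\in\ZZ}$ of analytic vectors. The key preliminary move is to rewrite \eqref{Jrdef} in disk coordinates $v=\frac{\tau-i}{\tau+i}$, $w=\frac{z-i}{z+i}$, where the kernel collapses: from $z-\tau=\frac{2i(w-v)}{(1-w)(1-v)}$, $\tau+i=\frac{2i}{1-v}$, $z-i=\frac{2iw}{1-w}$ one gets $\frac{2i(z-\tau)}{(\tau+i)(z-i)}=1-\frac vw$ and $\frac{d\tau}{(\tau+i)^2}=-\frac i2\,dv$, so that, with $\tilde h(v)=h(\tau)$ and $C'$ the image of $C$,
\be J_rh(z)\=-\frac i{2\pi}\int_{C'}\tilde h(v)\,\Bigl(1-\frac vw\Bigr)^{-r}\,dv\,.\ee
With the branch and contour conventions of the statement the integrand is single-valued and holomorphic in $v$ near $C'$ (the only relevant branch point of $(1-v/w)^{-r}$ is at $v=w$, which the conventions place off $C'$), so $J_rh$ is holomorphic in $w$ off $C'$; shrinking the radius of $C'$ slightly shows that $J_rh$ is holomorphic on a neighbourhood of $\proj\RR$ and extends holomorphically to all of $\lhp$, so in fact $J_rh\in\mD^\om(r/2,-r)\subset\V^\om(r/2,-r)$ for every $r\in\RR$.

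For the $\tG$-equivariance I would substitute the explicit action \eqref{act-ucg} for $(h|^\prs_{1-r/2,2-r}\tilde g)(\tau)$ into \eqref{Jrdef} and change variables $\tau\mapsto g^{-1}\tau$, deforming the contour back to $C$; the computation reduces to checking that the factors $(c\tau+d)$, $(\tau+i)$, $(z-\tau)$, $(z-i)$, together with the Jacobian of $\tau\mapsto g^{-1}\tau$, recombine into the automorphy factor of $|^\prs_{r/2,-r}$ at $(z,\tilde g)$ times the kernel evaluated at the new point. As $\tG$ is connected it suffices to do this for $g$ near $1$ in $\SL_2(\RR)$, where all powers of products may be split apart, and the identity then propagates by analyticity (cf.\ \eqref{G0def} and the surrounding remarks); moreover the central characters match, $e^{-\pi i(2-r)}=e^{-\pi i(-r)}$, so $J_r$ also intertwines the induced $\tGm$- and $\Gm$-actions. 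The same content can be packaged more conceptually: using $\frac{d\tau}{(\tau+i)^2}=\frac{\tau-i}{\tau+i}\frac{d\tau}{\tau^2+1}$ one rewrites $J_rh(z)=\langle\ell(h),k_z\rangle$ with $\ell$ the multiplication-by-$w$ isomorphism of \S\ref{sect-intertw}, $k_z(\tau)=\bigl(\frac{2i(z-\tau)}{(\tau+i)(z-i)}\bigr)^{-r}\in\V^\om(r/2,r)$ in the variable $\tau$, and $\langle\cdot,\cdot\rangle$ the duality of \eqref{gen-duality}, and then equivariance in $z$ reduces to the transformation behaviour of $z\mapsto k_z$.

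For (ii) and (iii) I would expand $\bigl(1-\frac vw\bigr)^{-r}=\sum_{k\geq 0}\frac{(r)_k}{k!}v^kw^{-k}$, valid for $|v|<|w|$ (the situation on $C'$), and integrate the basis functions term by term; the residue at $v=0$ gives $J_r(w^n)=0$ for $n\geq 0$ and $J_r(w^n)=\pm\frac{(r)_{|n|-1}}{(|n|-1)!}\,w^{n+1}$ for $n\leq -1$, a formula of the same shape as \eqref{srw}. For a general $h=\sum_na_nw^n$ one interchanges sum and integral (legitimate by the geometric decay \eqref{pr} of the $a_n$) to get $J_rh=\sum_{n\leq -1}a_n\gamma_n\,w^{n+1}$ with $\gamma_n=\pm(r)_{|n|-1}/(|n|-1)!$; by Stirling $\gamma_n$, and $\gamma_n^{-1}$ when defined, grow at most polynomially in $|n|$. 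If $r\notin\ZZ_{\leq 0}$, then every $\gamma_n$ with $n\leq -1$ is nonzero, so $\ker J_r$ is exactly $\{\sum_{n\geq 0}a_nw^n\}=\pD^\om(1-r/2,2-r)$, and solving $a_n=\gamma_n^{-1}b_{n+1}$ (the polynomial bound on $\gamma_n^{-1}$ preserving geometric decay) shows $J_r$ is onto all of $\mD^\om(r/2,-r)$. If $r\in\ZZ_{\leq 0}$, then $(r)_{|n|-1}$ vanishes exactly for $n\leq -|r|-2$, so $J_r(w^n)\neq0$ only for $-|r|-1\leq n\leq -1$ and the image is $\mathrm{span}\{w^m:-|r|\leq m\leq 0\}=\mD^\pol(r/2,-r)$.

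I expect the main obstacle to be the bookkeeping in the equivariance verification --- keeping the branches of all the powers, and the position and orientation of $C$, mutually consistent under $\tau\mapsto g\tau$ --- together with the routine but genuinely necessary polynomial growth estimate on $\gamma_n^{-1}$, which promotes the evident density of the image to surjectivity onto the full module $\mD^\om(r/2,-r)$. Once one has passed to disk coordinates the basis computation itself is short.
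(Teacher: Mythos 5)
Your proof follows essentially the same route as the paper's: after passing to disk coordinates you recover exactly the coefficient formula \eqref{Jr-w} (your $J_r(w^n)=\pm\frac{(r)_{|n|-1}}{(|n|-1)!}\,w^{n+1}$ for $n\le -1$, zero for $n\ge 0$), deduce parts ii) and iii) from it, and verify equivariance by the same near-identity computation, propagated by connectedness and analyticity, that the paper also leaves to the reader. The additional details you supply --- the holomorphy of $J_rh$ outside the contour and the polynomial bound on $\gamma_n^{-1}$ that upgrades density to surjectivity onto $\mD^\om(r/2,-r)$ --- are correct and merely make explicit what the paper's appeal to \eqref{Jr-w} leaves implicit.
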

\begin{proof}Any $h \in \V^\om(1-\nobreak r/2,2-\nobreak r)$ has an
expansion $h = \sum_{m\in \ZZ} c_m\, w^m $ in the disk coordinate
$w=\frac{z-i}{z+i}$, where $c_m = \oh\bigl( (1+\e)^{-|m|}\bigr)$ for
some $\e>0$.

If we formulate the integral in \eqref{Jrdef} in disk coordinates $w$
and $v$, with the substitution $\tau= i \frac{1+v}{1-v}$, and insert
the expansion we arrive at
\be\label{Jr-w} J_r : \sum_{m\in \ZZ} c_m w^m \mapsto \sum_{m\leq -1}
c_m \frac{(r)_{-m-1}}{(-m-1)!}\, w^{m+1}\,.\ee

To show that $J_r$ is an intertwining operator we have to check that
\[ (J_r h)|^\prs_{r/2,-r} g = J_r\bigl( h|^\prs_{1-r/2,2-r} g\bigr)
\qquad \text{ for all }g\in \tG\,.\]
It suffices to do this for $\tilde g$ with $g$ in a small neighborhood
of $1$ in~$\SL_2(\RR)$, for which we do not have to worry about taking
apart powers of products and quotients. We leave this computation to
the reader.
(An alternative would be to look at the Lie algebra action on the weight
vectors $w^m$.) This completes the proof of part~i).

In part ii) we have $r\not\in \ZZ_{\leq 0}$ and the statements
concerning the kernel and the image of $J_r$ follow from~\eqref{Jr-w}.
For part iii)
we have $r=-a$ with $a\in \ZZ_{\geq 0}$. So $(r)_{-m-1}$ is nonzero if
and only $-m-1$ or $-a+|m|-2\leq -1$. This leads to possibly non-zero
terms in the image in \eqref{Jr-w} with $-a\leq m+1\leq 0$.
\end{proof}

A lift of the intertwining operator $J_r$ in~\eqref{Jr} is a linear map
$\s: \im J_r \rightarrow \V^\om(1-\nobreak r/2,2-\nobreak r)$ such that
$J_r\circ \s$ is the identity on $\im J_r$. A lift is in general not an
intertwining operator. With \eqref{Jr-w} it is not hard to describe a
lift.
\begin{prop}\label{prop-lift}\begin{enumerate}
\item[i)] For $r\in \RR \setminus \ZZ_{\leq 0}$ we define $\s_r \ph$ for
$\ph \in \mD^\om( r/2,-\nobreak r)$
\be \label{sgr-int}\s_{\!r}\ph(z) \= \frac1\pi \,\frac{z+i}{z-i}
\int_{\tau \in C} \ph(\tau) \hypg21
\Bigl(1,1;r;\frac{(\tau-i)(z+i)}{(\tau+i)(z-i)}\Bigr)\,
\frac{d\tau}{\tau^2+1}\,,\ee
where $C$ is a positively oriented closed curve in
$\proj\CC \setminus \{i,-i\}$ homotopic to $\proj\RR$ in the domain of
$\ph$, and where $z$ is outside~$C$.

The resulting linear map
$\s_r: \mD^\om(r/2,-r)\rightarrow \V^\om(1-\nobreak r/2,2-\nobreak r)$
is a lift of $J_r$.
\item[ii)] For $n\in \ZZ\leq 0$ we denote by $\ph_n$ the element of
$\mD^\om(r/2,-r)$ given by $\ph_n(w)=w^n$ (in disk coordinates). Then
\be \label{sr-bas}\s_r \ph_n \= \frac{|n|!}{(r)_{|n|}} \ph_{n-1}\ee
for all $r\in \CC\setminus \ZZ_{\leq 0}$.
\item[iii)] For $r\in \ZZ_{\leq 0}$ we define $\s_r$ on
$\mD^\pol(r/2,-r)$ by use of \eqref{sr-bas} for $r \leq n \leq 0$ on
the basis elements $\ph_n$. The resulting map
$s_r:\mD^\pol(r/2,-r) \rightarrow \V^\om(1-\nobreak r/2,2-\nobreak r)$
is a lift of $J_r$.
\end{enumerate}
\end{prop}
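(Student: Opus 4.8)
The plan is to carry everything out in the disk coordinate $w=\frac{z-i}{z+i}$ of \S\ref{sect-dc}, reducing the proposition to a single residue computation together with the explicit formula \eqref{Jr-w} for $J_r$ on monomials. The computational core is part~ii). Putting $v=\frac{\tau-i}{\tau+i}$, the substitution $\tau=i\frac{1+v}{1-v}$ gives the elementary identities
\[ \frac{(\tau-i)(z+i)}{(\tau+i)(z-i)}\=\frac vw\,,\qquad \frac{z+i}{z-i}\=\frac1w\,,\qquad \frac{d\tau}{\tau^2+1}\=\frac{dv}{2iv}\,, \]
so that, choosing $C$ to be a positively oriented circle $|v|=c$ lying in the domain of the representative of $\ph\in\mD^\om(r/2,-r)$ (which, since $\ph\in\mD^\om$, extends slightly across $\proj\RR$, so $c$ may be taken just inside the unit circle) and any $w$ with $|w|>c$, formula \eqref{sgr-int} becomes
\[ \s_r\ph(w)\=\frac1{2\pi i\,w}\int_{|v|=c}\ph[v]\;\hypg21\bigl(1,1;r;\tfrac vw\bigr)\,\frac{dv}v\,. \]
Inserting the series $\hypg21(1,1;r;x)=\sum_{k\geq0}\frac{k!}{(r)_k}x^k$ (legitimate, with termwise integration, because $|v/w|=c/|w|<1$ on $C$), taking $\ph=\ph_n$ with $n\leq0$ so that $\ph[v]=v^n$, and observing that $\frac1{2\pi i}\int_{|v|=c}v^{n+k-1}\,dv$ vanishes unless $k=|n|$, one is left with the single term $\s_r\ph_n(w)=\frac{|n|!}{(r)_{|n|}}\,w^{n-1}$. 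As $r$ enters only through the Pochhammer symbols $(r)_k$, this holds verbatim for every $r\in\CC\setminus\ZZ_{\leq0}$, which is part~ii).

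For part~i), recall that $\ph\in\mD^\om(r/2,-r)$ has a polar expansion $\ph[w]=\sum_{n\leq0}a_n\,w^n$ with $a_n=\oh\bigl((1+\e)^{-|n|}\bigr)$ for some $\e>0$, so $\sum_{n\leq0}a_n\,v^n$ converges uniformly on $C$ and $\s_r$ may be applied termwise; by part~ii) this gives $\s_r\ph=\sum_{n\leq0}a_n\,\frac{|n|!}{(r)_{|n|}}\,w^{n-1}$. The coefficients grow only polynomially in $|n|$, so this Laurent series in $1/w$ converges on a neighbourhood of $\proj\RR$ and vanishes at $w=\infty$; hence $\s_r\ph\in\V^\om(1-r/2,2-r)$, and the value is independent of the admissible choice of $C$ by Cauchy's theorem, since two such circles bound an annulus on which the integrand is holomorphic. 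Finally, applying $J_r$ through \eqref{Jr-w} and using $|n-1|=-(n-1)$ for $n\leq0$, the Pochhammer factors cancel: $J_r\bigl(\s_r\ph_n\bigr)=\frac{|n|!}{(r)_{|n|}}\cdot\frac{(r)_{|n|}}{|n|!}\,w^n=\ph_n$. Therefore $J_r\circ\s_r$ is the identity on the span of the $\ph_n$, $n\leq0$, hence on all of $\mD^\om(r/2,-r)=\im J_r$ (Proposition~\ref{prop-ittwJ}(ii)) by continuity, so $\s_r$ is a lift of $J_r$.

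For part~iii), when $r\in\ZZ_{\leq0}$ one has $(r)_k\neq0$ exactly for $0\leq k\leq|r|$, which is precisely why \eqref{sr-bas} is meaningful on the basis $\ph_n$, $r\leq n\leq0$, of the finite-dimensional module $\mD^\pol(r/2,-r)=\im J_r$ (Proposition~\ref{prop-ittwJ}(iii)). The same cancellation $\frac{|n|!}{(r)_{|n|}}\cdot\frac{(r)_{|n|}}{|n|!}=1$, now applying \eqref{Jr-w} to $w^{n-1}$ with $r-1\leq n-1\leq-1$, yields $J_r(\s_r\ph_n)=\ph_n$; extending $\s_r$ linearly makes it a lift of $J_r$, and no convergence issue arises here.

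The construction is essentially a computation, so I do not expect a genuine obstacle. The only points requiring care are the two interchanges of summation and integration — the hypergeometric series, and the polar expansion of a general $\ph$, both justified by uniform convergence on $C$ — and the verification that the Laurent series produced by $\s_r$ really is an analytic vector for the parameters $(1-r/2,2-r)$, in particular is holomorphic at $w=\infty$, which holds because only strictly negative powers of $w$ occur. Once \eqref{Jr-w} and the geometric decay of polar coefficients in $\mD^\om$ are available, the rest is bookkeeping with Pochhammer symbols.
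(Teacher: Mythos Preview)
Your argument is correct and follows essentially the same route as the paper: pass to disk coordinates, expand the hypergeometric series, and pick out the single surviving term by Cauchy's formula to obtain~\eqref{sr-bas}; then read off the lift property from~\eqref{Jr-w}. The paper's own proof is a two-sentence sketch of exactly this computation, so what you have written is a correct fleshing-out rather than an alternative approach.
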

\begin{proof}Comparison with \eqref{Jr-w} shows that \eqref{sr-bas} in
part~ii) describes a lift of $J_r$ for $r\not\in \ZZ_{\leq0}$, and
gives part iii) as well.

Insertion of the power series of the hypergeometric function in
\eqref{sgr-int} and interchanging the order of summation and
integration leads to~\eqref{sr-bas}. This gives part~i).
\end{proof}

\rmrk{Freedom in the section}We have given one section $\s_r$ of $J_r$
with a simple description in the disk coordinate. We can add to $\s_r$
an arbitrary map from $\mD^\om(r/2,-r)$, respectively
$\mD^\pol(r/2,-r)$, to $\pD^\om(1-\nobreak r/2,2-\nobreak r)$ to obtain
another section.

We note that for all $g\in \tG$ and $\ph \in \mD^\om(r/2,-r)$
\be \label{trl-lift}\bigl( \s_r\ph)|^\prs_{1-r/2,2-r} g \quad\text{ and
}\quad \s_r\bigl( \ph|^\prs_{r/2,-r} g\bigr)\ee
have the same image under $J_r$. Hence their difference is in
$\pD^\om(1-\nobreak r/2,2-\nobreak r)$.

\subsection{Duality}\label{sect-dualps}
We formulate the duality theorem \ref{thm-dual} in the context of
suitable principal series representations.
\begin{thm}\label{thm-dual-ps}\begin{enumerate}
\item[i)] Let $r\in \RR \setminus \ZZ_{\leq 0}$. There is a
non-degenerate $\tG$-invariant bilinear form $[\cdot,\cdot]_r$ on
$\pD^{-\om}(r/2,r) \times \mD^\om(r/2,-r)$ given by
\be \label{di} [\al,f^-]_r \= \frac1\pi \int_{z\in C} h^+(z) \,\bigl(
\s_r f^- \bigr)(z)\,\frac{dz}{(z+i)^2}\,.\ee
Here $h^+$ is the holomorphic function on $\uhp$ that together with the
zero function on $\lhp$ represents the hyperfunction
$\al\in \pD^{-\om}(r/2,r)$, and $f^- \in \mD^\om(r/2,-r)$. The cycle
$C$ is homotopic to $\proj\RR$ in the intersection of $\uhp$ and the
domain of $\s_r f^-$.

The value of the bilinear form does not change if we add to $\s_r f^-$
any holomorphic function on $\uhp$.

\item[ii)] Let $r\in \ZZ_{\leq 0}$. The integral in \eqref{di}, now with
$f\in \mD^\pol(r/2,-r)$ defines a $\tG$-invariant bilinear form on
$\pD^{-\om}(r/2,r) \times \mD^\pol(r/2,-r)$. The bilinear form is
non-degenerate when restricted to
$\pD^\pol(r/2,r) \times \mD^\pol(r/2,-r)$.
\end{enumerate}\end{thm}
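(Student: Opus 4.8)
The plan is to recognise $[\cdot,\cdot]_r$ as the general $\tG$-invariant pairing $\langle\cdot,\cdot\rangle$ of \S\ref{sect-ps}, transported along the operators $\s_r$, $J_r$ and multiplication by $w$. Fix $s=r/2$ and $p=r$; then \eqref{gen-duality} provides a $\tG$-invariant pairing $\langle\cdot,\cdot\rangle$ between $\V^{-\om}(r/2,r)$ and $\V^\om(1-r/2,-r)$, and $\pD^{-\om}(r/2,r)$ is a $\tG$-submodule of the first space. For the second factor I use the lift $\s_r$ of Proposition~\ref{prop-lift}, which maps $\mD^\om(r/2,-r)$ (respectively $\mD^\pol(r/2,-r)$ when $r\in\ZZ_{\leq0}$) into $\V^\om(1-r/2,2-r)$ with $J_r\s_r=\mathrm{id}$, followed by the intertwining operator $\ell$ of \S\ref{sect-intertw}, that is, multiplication by $w=\frac{z-i}{z+i}$, which carries $\V^\om(1-r/2,2-r)$ to $\V^\om(1-r/2,-r)$. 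I then set $[\al,f^-]_r:=\bigl\langle\al,\ell\s_rf^-\bigr\rangle$ and must check that this equals the right-hand side of \eqref{di} and is $\tG$-invariant and non-degenerate.

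\textbf{Step 1: the explicit formula and section-independence.} Passing to the disk coordinate $w$ one has $\frac{dz}{z^2+1}=-\frac{i}{2w}\,dw$ and $\frac{dz}{(z+i)^2}=-\frac{i}{2}\,dw$, so the factor $w$ coming from $\ell$ cancels precisely the pole of the measure in the formula \eqref{du} for $\langle\cdot,\cdot\rangle$. Since $\al\in\pD^{-\om}(r/2,r)$ is represented by a function $h^+$ holomorphic on $\uhp$ and vanishing on $\lhp$, the $C_-$-term in \eqref{du} drops out, and unravelling \eqref{du} reproduces exactly \eqref{di}; the contour $C$ must enclose $i$ because $\s_rf^-$ has a pole there (its polar expansion contains $w^{-1}$, by \eqref{sr-bas}). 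The same computation shows that adding to $\s_rf^-$ any function holomorphic on $\uhp$ does not change $[\al,f^-]_r$: in disk coordinates the integrand becomes holomorphic inside $C$ and the integral vanishes by Cauchy's theorem. In particular $[\al,f^-]_r$ does not depend on the chosen section of $J_r$, because two sections differ by a map into $\pD^\om(1-r/2,2-r)$ (the Remark after Proposition~\ref{prop-lift}).

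\textbf{Step 2: $\tG$-invariance, and the main obstacle.} Given $g\in\tG$, define a new section of $J_r$ by $\s_r'\psi:=\bigl(\s_r(\psi|^\prs_{r/2,-r}g^{-1})\bigr)\big|^\prs_{1-r/2,2-r}g$; it is a section because $J_r$ is an intertwining operator (Proposition~\ref{prop-ittwJ}). By Step~1 the number $[\al|g,f^-|g]_r$ may be computed using $\s_r'$ instead of $\s_r$, and by construction $\s_r'(f^-|^\prs_{r/2,-r}g)=(\s_rf^-)|^\prs_{1-r/2,2-r}g$. Since $\ell$ intertwines the two $\tG$-actions, $\ell\s_r'(f^-|g)=(\ell\s_rf^-)|^\prs_{1-r/2,-r}g$, so the $\tG$-invariance \eqref{gen-duality} of $\langle\cdot,\cdot\rangle$ yields
\[ [\al|g,f^-|g]_r=\bigl\langle\al|g,(\ell\s_rf^-)|g\bigr\rangle=\bigl\langle\al,\ell\s_rf^-\bigr\rangle=[\al,f^-]_r. \]
The delicate point—and the step I expect to be the main obstacle—is that $\s_r$ is only a section of $J_r$, not an intertwining operator, so the coboundary-type discrepancy $(\s_rf^-)|g-\s_r(f^-|g)\in\ker J_r$ must be invisible to $\al\mapsto\langle\al,\ell(\cdot)\rangle$. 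For $r\notin\ZZ_{\leq0}$ this is automatic since $\ker J_r=\pD^\om(1-r/2,2-r)$, which is exactly the freedom absorbed in Step~1. For $r\in\ZZ_{\leq0}$, where $\ker J_r$ is strictly larger, one instead checks directly that on the finite-dimensional module $\mD^\pol(r/2,-r)$ the section $\s_r$ is an intertwining operator onto the submodule $\mathrm{span}\{w^{-1},\dots,w^{-|r|-1}\}$ of $\V^\om(1-r/2,2-r)$—there is no room for a correction because $J_r$ is injective on that span—so $(\s_rf^-)|g=\s_r(f^-|g)$ outright and the computation goes through unchanged.

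\textbf{Step 3: non-degeneracy.} With the polar expansions $\tilde h^+(w)=\sum_{n\geq0}c_nw^n$ and $\tilde f^-(w)=\sum_{m\leq0}d_mw^m$ and the explicit values $\s_r\ph_n=\frac{|n|!}{(r)_{|n|}}\ph_{n-1}$ from \eqref{sr-bas}, reading off the $w^{-1}$-coefficient in \eqref{di} gives $[\al,f^-]_r=\sum_{n\geq0}\frac{n!}{(r)_n}\,c_n\,d_{-n}$. When $r\notin\ZZ_{\leq0}$ each coefficient $\frac{n!}{(r)_n}$ is nonzero, so pairing a nonzero $\al$ against a monomial $w^{-N}\in\mD^\om(r/2,-r)$ with $c_N\neq0$ (and, dually, a nonzero $f^-$ against $w^N\in\pD^\om(r/2,r)$) gives a nonzero value, which is the non-degeneracy in part~i). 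When $r\in\ZZ_{\leq0}$ one has $(r)_n\neq0$ precisely for $0\leq n\leq|r|$; the sum truncates, $\s_r$ and the form are defined on $\mD^\pol(r/2,-r)$, and the same monomial argument—restricted now to $\pD^\pol(r/2,r)\times\mD^\pol(r/2,-r)$—gives the non-degeneracy asserted in part~ii). As a by-product, via Proposition~\ref{prop-ident} Theorem~\ref{thm-dual-ps} is equivalent to Theorem~\ref{thm-dual}.
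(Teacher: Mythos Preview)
Your proposal is correct and follows essentially the same route as the paper. Both arguments recognise $[\cdot,\cdot]_r$ as the general $\tG$-invariant pairing $\langle\cdot,\cdot\rangle$ of \eqref{du}--\eqref{gen-duality}, transported via the shift operator $\ell$ (multiplication by $w$) and the section $\s_r$ of $J_r$; the only cosmetic difference is that the paper applies $\ell$ to the hyperfunction factor, writing $[\al,\ph]_r=\langle\ell\al,\s_r\ph\rangle$, whereas you apply it to the analytic-vector factor and write $\langle\al,\ell\s_rf^-\rangle$. Since $\ell$ is just multiplication by $w$ in the integrand these are the same integral. For invariance, the paper argues more directly by first observing that $\ell\bigl(\pD^{-\om}(r/2,r)\bigr)$ is orthogonal to $\pD^\om(1-r/2,2-r)$, so the pairing descends to the quotient $\V^\om(1-r/2,2-r)\big/\pD^\om(1-r/2,2-r)$, which is a $\tG$-module canonically identified with $\mD^\om(r/2,-r)$ via $J_r$; your ``twisted section'' $\s_r'$ accomplishes the same thing in a slightly more hands-on way. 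One small caution: in your treatment of part~ii) you assert that $\mathrm{span}\{w^{-1},\dots,w^{-|r|-1}\}$ is a $\tG$-submodule of $\V^\om(1-r/2,2-r)$ without verifying it; the paper's quotient formulation avoids needing this claim.
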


We first show how this theorem implies Theorem~\ref{thm-dual}, and next
prove the present version.
\begin{proof}[Proof of Theorem \ref{thm-dual}] Parts i) and iii) are
identical to part i) in Theorem~\ref{thm-dual} with use of the
identifications in Proposition~\ref{prop-ident}.

For part~ii) we note that if $h\in \udsv{v,r}{-\infty}$ then the
coefficients in the polar expansion \eqref{pr} of $h^+$ satisfy
$c_n \ll (1+n)^B$ for some $B>0$. The natural topology on
$\dsv{v^{-1},r}\infty$ given by the supremum norms of the derivatives
with respect to $-\cot t$ for $t\in \proj\RR$ can also be described by
the seminorms
$\sum_{m\leq 0} d_m w^m \mapsto \sup_{m\leq 0} |d_m| \, m^A$ for all
$A>0$. A computation shows that
\be \Bigl[\sum_{n\geq 0} c_n\, w^n, \sum_{m\leq 0} d_m \, w^{-m}
\Bigr]_r \= \sum_{n\geq 0} \frac {n!}{(r)_n}\, c_nd_m\,.\ee
This gives part~iv) of the theorem, and shows the linear form
$f\mapsto [h,f]_r$ extends continuously to $\dsv {v^{-1},r}\infty$.
\end{proof}

\begin{proof}[Proof of Theorem \ref{thm-dual-ps}] We start with the
duality in \eqref{gen-duality}, in the situation
\[ \langle\cdot,\cdot\rangle :\V^{-\om}(r/2,r-2) \times
\V^\om(1-r/2,2-r)\rightarrow \CC\,.\]

There is an algebraic isomorphism
$\ell:\V^{-\om}(s,p) \rightarrow \V^{-\om}(s,p-\nobreak 2)$, given by
multiplication by $w=\frac{z-i}{z+i}$. It preserves the subspaces of
distribution vectors, smooth vectors and analytic vectors. See the
characterization in terms of the polar expansion in \S\ref{sect-dc}.
Applying it to $\pD^{-\om}(r/2,r) \subset \V^{-\om}(r/2,r)$ we get a
submodule of of $\V^{-\om}(r/2,r-\nobreak 2)$. With use of the
expansion in the disk coordinate $w$ this submodule can be checked to
be orthogonal to $\pD^\om(1-\nobreak r/2,2-\nobreak r)$. So we get a
$\tG$-invariant duality
\[ \pD^{-\om}(r/2,r)\times \Bigl(\V^\om(1-r/2,2-r) \Bigm/
\pD^\om(1-r/2,2-r)\Bigr) \rightarrow\CC\,.\]

If $r\not\in \ZZ_{\leq 0}$ the section $\s_{\!r}$ of the intertwining
operator $J_r$ in Proposition~\ref{prop-lift} induces an isomorphism
\[ \mD^\om(r/2,-r) \cong \Bigl(\V^\om(1-r/2,2-r) \Bigm/
\pD^\om(1-r/2,2-r)\Bigr) \,.\]
Combining this we obtain for $\al \in \pD^{-\om}(r/2,r)$ and
$\ph \in \mD^\om(r/2,-r)$
\be\label{expl-du} \bigl[ \al,\ph ]_r \= \bigl\langle \ell \al, \s_{\!r}
\ph \bigr\rangle\,. \ee

Since $\al \in \pD^{-\om}(r/2,r)$, it has a representative given by a
holomorphic function $\ps$ on $\uhp$ and by $0$ on~$\lhp$. The
hyperfunction $\ell\al$ is represented by
$z\mapsto \frac{z-i}{z+i} \, h^+(z)$.

To see that the bilinear form is non-degenerate we note that in terms of
$w=\frac{z-i}{z+i}$ we have
\be \bigl\langle w^a, w^{-b}\bigr\rangle \= \frac{a!}{(r)_a}\,
\dt_{a,b}\,. \qedhere\ee

If $r\in \ZZ_{\leq 0}$, the lift $\s_r$ is defined only on
$\mD^\pol(r/2,-r)$. The bilinear form is defined on
$\pD^{-\om}(r/2,r)\times \mD^\pol(r/2,-r)$. In \eqref{expl-du} we see
that it is non-degenerate on $\pD^\pol(r/2,r) \times \mD^\pol(r/2,-r)$.
\end{proof}

\subsection{Variants of the integral in the duality
theorem}\label{sect-vidt} In the discussion in \S\ref{sect-trip} of the
triple integral we need an alternative way to describe the integrals in
the duality theorem. The proofs are easiest in the context of the
universal covering group~$\tG$.

The function
\be\label{kpdef} \kp(\tau_1,\tau_2;\tau) \= \Bigl(
\frac{\tau_1-\tau}{i-\tau}\Bigr)^{r_1-2}\,
\Bigl(\frac{\tau_2-\tau}{i-\tau}\Bigr)^{r_2-2}\ee
is holomorphic on $\uhp^3$ minus the set for which $\tau$ is on the
union of geodesic segment $s_{i,\tau_1}\cup s_{i,\tau_2}$. The function
 $\tau\mapsto \kp(\tau_1,\tau_2,\tau)$ extends holomorphically to
 $\proj\RR$ and $\lhp$, and determines an element of
 $\mD^\om(r_3/2,-r_3)$.

For functions holomorphic on the product of $\uhp^2$ and some
neighborhood of $\proj\RR$ in $\proj\CC$ we define two actions of
$\tilde G$:
\badl{tens-act} |_{r_1,r_2,r_3}^+ g &\= |_{2-r_1}g \otimes |_{2-r_2}g
\otimes |^\prs_{1-r_3/2,2-r_3}g\,,\\
 |_{r_1,r_2,r_3}^- g &\= |_{2-r_1}g \otimes |_{2-r_2}g \otimes
 |^\prs_{r_3/2,-r_3}g\,. \eadl
The action $|^\prs_{s,p}$ in the third factor of the tensor product is
given in~\eqref{act-ucg}. We define the other actions by taking for
 $M=\matc abcd $ near $1$ in $\SL_2(\RR)$
\[ h|_{2-r_1}\tilde M \otimes |_{2-r_2}\tilde M (\tau_1,\tau_2,z)
\= (c\tau_1+d)^{r_1-2}\, (c\tau_2+d)^{r_2-2}\, h(M\tau_1,M\tau_2,z)\,.\]
This determines an action of the universal covering group.

\begin{lem}\label{lem-kps3kp}Let $r_1,r_2\in \RR$, and put
$r_3=4-r_1-r_2$.
  \begin{enumerate}
  \item[i)] $\kp|_{2-r_1,2-r_2,r_3}^- g = \kp$ for all $g\in \tG$ and
  all $r_3\in \RR$.

\item[ii)] For given $\tau_1,\tau_2\in \uhp^2$,
$r_3\not\in \ZZ_{\leq 0}$ and $g\in \tG$
\[ (\s_{r_3}\kp) |_{2-r_1,2-r_2,r_3}^+ g\, (\tau_1,\tau_2,z)-
\s_{r_3}\kp
(\tau_1,\tau_2,z)\]
extends as a holomorphic function of $z$ on~$\uhp$.
\end{enumerate}
\end{lem}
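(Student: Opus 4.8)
The plan is to prove part~i) by a short direct computation after reducing to group elements near the identity, and then to deduce part~ii) almost formally from part~i) and the relation \eqref{trl-lift}.

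\emph{Part i).} Since $\tG$ is connected, the actions in \eqref{tens-act} depend real-analytically on $g$, and $\kp$ is a fixed function, it suffices to check $\kp|_{2-r_1,2-r_2,r_3}^-\tilde g=\kp$ for $g=\matc abcd$ in a small neighborhood of $1$ in $\SL_2(\RR)$; there every argument occurring is a small deformation of its value at $g=1$, where the identity is trivially true, so powers of products and quotients may be split without branch ambiguity (the device used around \eqref{G0def}). For such $g$ I would substitute the cocycle identities
\[ g\tau_j-g\tau\=\frac{\tau_j-\tau}{(c\tau_j+d)(c\tau+d)}\,,\qquad
 i-g\tau\=\frac{-(a-ic)(\tau-g^{-1}i)}{c\tau+d}\,,\]
and use that for $(s,p)=(r_3/2,-r_3)$ the outer exponents $-s-p/2$ and $s+p/2$ in \eqref{act-ucg} both vanish, so the third-variable factor of the action collapses to $(a-ic)^{-r_3}\bigl(\tfrac{\tau-i}{\tau-g^{-1}i}\bigr)^{r_3}$. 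Collecting terms in $\kp(g\tau_1,g\tau_2;g\tau)$: the weight-$(2-r_1)$ and weight-$(2-r_2)$ slash factors cancel the $(c\tau_j+d)$'s, the two $(i-g\tau)$ denominators contribute $(-(a-ic))^{r_3}(\tau-g^{-1}i)^{r_3}$ (here $r_3=4-r_1-r_2$ is used), and the third-variable factor cancels the $(a-ic)$ and $(\tau-g^{-1}i)$ powers, leaving exactly $\kp(\tau_1,\tau_2;\tau)$.

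\emph{Part ii).} Fix $\tau_1,\tau_2\in\uhp$, so that $\tau\mapsto\kp(\tau_1,\tau_2;\tau)$ is the element of $\mD^\om(r_3/2,-r_3)$ recalled just before the lemma and $\s_{r_3}\kp$ is defined. Since $\s_{r_3}$ is an integral transform in the third variable alone, it commutes with the weight-$(2-r_1)$ and weight-$(2-r_2)$ slash actions on $\tau_1,\tau_2$. Writing $|_{2-r_1,2-r_2,r_3}^+\tilde g$ as those two slash actions followed by $|^\prs_{1-r_3/2,2-r_3}\tilde g$ in the last variable, one gets
\[ (\s_{r_3}\kp)|_{2-r_1,2-r_2,r_3}^+\tilde g\=
\Bigl(\s_{r_3}\bigl(\kp|_{2-r_1}\tilde g\otimes|_{2-r_2}\tilde g\bigr)\Bigr)|^\prs_{1-r_3/2,2-r_3}\tilde g\,, \]
where $\kp|_{2-r_1}\tilde g\otimes|_{2-r_2}\tilde g$ is again in $\mD^\om(r_3/2,-r_3)$ as a function of the last variable (it is $\kp$ with $\tau_1,\tau_2$ replaced by $g\tau_1,g\tau_2\in\uhp$, up to a constant not involving that variable). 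Now \eqref{trl-lift} says that for any $\ph\in\mD^\om(r_3/2,-r_3)$ the difference $(\s_{r_3}\ph)|^\prs_{1-r_3/2,2-r_3}\tilde g-\s_{r_3}(\ph|^\prs_{r_3/2,-r_3}\tilde g)$ lies in $\pD^\om(1-r_3/2,2-r_3)$, i.e.\ extends holomorphically in $z$ to $\uhp$. Taking $\ph=\kp|_{2-r_1}\tilde g\otimes|_{2-r_2}\tilde g$, part~i) gives $\ph|^\prs_{r_3/2,-r_3}\tilde g=\kp|_{2-r_1,2-r_2,r_3}^-\tilde g=\kp$, so $(\s_{r_3}\kp)|_{2-r_1,2-r_2,r_3}^+\tilde g-\s_{r_3}\kp\in\pD^\om(1-r_3/2,2-r_3)$, which is the claim.

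The only step requiring care is the branch/argument bookkeeping in part~i); restricting to $g$ near $1$ keeps $(a-ic)$, $(c\tau_j+d)$ and $\tau-g^{-1}i$ in the region where the naive power laws hold, so this is routine rather than a genuine obstacle. Part~ii) is then essentially formal, the only points to check being that the $\tau_1$- and $\tau_2$-slashes commute with $\s_{r_3}$ and preserve membership in $\mD^\om(r_3/2,-r_3)$ in the last variable, both immediate from the definition of $\s_{r_3}$ in \eqref{sgr-int}.
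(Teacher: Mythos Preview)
Your proof is correct and follows essentially the same approach as the paper's. For part~i) you spell out the direct computation that the paper leaves implicit, and for part~ii) you cite \eqref{trl-lift} directly where the paper rederives its content inline via $J_{r_3}$; the logical structure---commute $\s_{r_3}$ past the $\tau_1,\tau_2$-slashes, apply the lift property, then use part~i)---is identical.
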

\begin{proof} Part~i) is a consequence of the properties of Eichler
kernels. It can be checked for $g=\tilde M$ near to $1$ by a direct
computation. Since $\tG$ is connected this implies the invariance for
all~$g\in \tG$.

 For part~ii) we use Proposition \ref{prop-lift}. It gives the operator
 $\s_{r_3}$ as a lift of the intertwining operator $J_{r_3}$. Hence for
 for each $h\in \mD^\om(r_3/2,-r_3)$ and each $g\in \tG$
\begin{align*} J_{r_3}\Bigl( (\s_{r_3} h)|^\prs_{1-r_3/2,2-r_3} g\Bigr)
&\= \bigl( J_{r_3}\s_{r_3} h \bigr)\bigm|^\prs _{r_3/2,-r_3} g \= h
|^\prs _{r_3/2,-r_3} g\\
&\=J_{r_3}\Bigl( \s_{r_3} \bigl( h|_{r_3/2,-r_3}g\bigr)\Bigr) \,.
\end{align*}
So
$(\s_{r_3} h)|^\prs_{1-r_3/2,2-r_3} g -\s_{r_3} \bigl( h|_{r_3/2,-r_3}g\bigr)$
is in the kernel of $J_{r_3}$, hence holomorphic on~$\uhp$. The lift
$\s_{r_3}$ concerns only the third coordinate. Modulo functions of $z$
extending holomorphically to $\uhp$ we have
\bad (\s_{r_3}\kp)&|_{2-r_1}g\otimes |_{2-r_2} g\otimes
|^\prs_{1-r_3/2,1-r_3}g \= \s_{r_3} \bigl( \kp(|_{2-r_1}g\otimes
|_{2-r_2}g)\bigr) |^\prs_{1-r_3/2,2-r_3}g\\
&\;\equiv\; \s_{r_3}\bigl( \kp(|_{2-r_1}g\otimes g_{2-r_2}\otimes
g^\prs_{1-r_3/2,2-r_3}g)\bigr)
\= \s_{r_3}\kp\,, \ead
with use in the last step of the invariance of $\kp$ given in part~i).
\end{proof}

\begin{lem}\label{lem-skr}For $p\geq 1$,
$r_3=4-r_1-r_2 \not\in \ZZ_{\leq 0}$
\be \s_{r_3} \kp(i,ip;\cdot)(z)
\=2^{2-r_2}\,(p+1)^{r_2-2}\,\frac{z+i}{z-i}\, \hypg21\Bigl(
1,2-r_2;r_3;\frac{(p-1)(z+i)}{(p+1)(z-i)}\Bigr) \,,\ee
for $z$ outside a curve from $i$ to $ip$.
\end{lem}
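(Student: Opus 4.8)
The plan is to compute $\s_{r_3}\kp(i,ip;\cdot)$ directly in the disk coordinate $w=\frac{\tau-i}{\tau+i}$ of \S\ref{sect-dc}: simplify the kernel using $\tau_1=i$, expand $\kp(i,ip;\cdot)$ in its polar series in $w$, apply $\s_{r_3}$ term by term via \eqref{sr-bas}, and recognise the resulting series as a $\hypg21$. By the definition \eqref{kpdef}, putting $\tau_1=i$ makes the first factor $\bigl(\frac{i-\tau}{i-\tau}\bigr)^{r_1-2}=1$, so $\kp(i,ip;\tau)=\bigl(\frac{ip-\tau}{i-\tau}\bigr)^{r_2-2}$. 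Substituting $\tau=i\frac{1+w}{1-w}$ gives $i-\tau=\frac{-2iw}{1-w}$ and $ip-\tau=i\frac{(p-1)-(p+1)w}{1-w}$, hence
\[ \frac{ip-\tau}{i-\tau} \= \frac{(p+1)w-(p-1)}{2w} \= \frac{p+1}{2}\,\Bigl(1-\frac{p-1}{(p+1)w}\Bigr)\,. \]
With $a:=\frac{p-1}{p+1}\in[0,1)$, the function $\kp(i,ip;\cdot)$ equals, in the variable $w$, $\bigl(\tfrac{p+1}{2}\bigr)^{r_2-2}(1-a/w)^{r_2-2}$. This is holomorphic on $\proj\CC$ minus the segment from $w=0$ to $w=a$ (the image of the geodesic segment $s_{i,ip}$), and the branch is the one normalised by $\kp=1$ at $\tau=\infty$, i.e.\ $w=1$, which agrees with the principal branch of $(1-a/w)^{r_2-2}$. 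In particular $\kp(i,ip;\cdot)\in\mD^\om(r_3/2,-r_3)$, as already noted before Lemma~\ref{lem-kps3kp}, so $\s_{r_3}$ applies to it.

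Next I would expand $(1-a/w)^{r_2-2}=\sum_{k\geq0}\binom{r_2-2}{k}(-a/w)^k$, convergent for $|w|>a$, and use the identity $\binom{r_2-2}{k}(-1)^k=\frac{(2-r_2)_k}{k!}$ to write $\kp(i,ip;\cdot)=\bigl(\tfrac{p+1}{2}\bigr)^{r_2-2}\sum_{k\geq0}\frac{(2-r_2)_k}{k!}\,a^k\,\ph_{-k}$, where $\ph_n(w)=w^n$ as in Proposition~\ref{prop-lift}. Taking the contour $C$ in \eqref{sgr-int} to be a circle $|w|=a+\dt$ with $\dt>0$ small and $z$ with $\bigl|\frac{z-i}{z+i}\bigr|>a+\dt$ — which is exactly what the hypothesis ``$z$ outside a curve from $i$ to $ip$'' amounts to — the polar series converges uniformly on $C$, and the kernel $\hypg21(1,1;r_3;\cdot)$ of $\s_{r_3}$ is bounded on $C$, so $\s_{r_3}$ may be applied term by term. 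By \eqref{sr-bas}, $\s_{r_3}\ph_{-k}=\frac{k!}{(r_3)_k}\ph_{-k-1}$, so
\[ \s_{r_3}\kp(i,ip;\cdot) \= \Bigl(\tfrac{p+1}{2}\Bigr)^{r_2-2}\,w^{-1}\sum_{k\geq0}\frac{(2-r_2)_k}{(r_3)_k}\Bigl(\frac{a}{w}\Bigr)^k\,. \]

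Since $(1)_k=k!$, the sum is $\hypg21\bigl(1,2-r_2;r_3;a/w\bigr)$, convergent because $|a/w|<1$. Substituting back $w^{-1}=\frac{z+i}{z-i}$, so that $a/w=\frac{(p-1)(z+i)}{(p+1)(z-i)}$, and using $\bigl(\tfrac{p+1}{2}\bigr)^{r_2-2}=2^{2-r_2}(p+1)^{r_2-2}$, yields exactly the asserted identity. I expect the only real point to watch is the interchange of $\s_{r_3}$ with the infinite sum: one must choose the contour $C$ and the region of $z$ so that both the polar series of $\kp(i,ip;\cdot)$ and the resulting hypergeometric series are absolutely convergent there, and keep the branches of the multivalued powers consistent via the normalisation at $\tau=\infty\leftrightarrow w=1$; everything else is a routine computation with Pochhammer symbols.
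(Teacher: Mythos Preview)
Your proof is correct and follows essentially the same route as the paper's: both pass to the disk coordinate $w$, recognise that $\kp(i,ip;\cdot)$ becomes $(1-v)^{2-r_2}(1-v/w)^{r_2-2}$ with $v=\frac{p-1}{p+1}$, expand in a polar series, and sum to the stated $\hypg21$. The only cosmetic difference is that the paper inserts the integral definition \eqref{sgr-int} of $\s_{r_3}$ and expands both the hypergeometric kernel and $\kp$ simultaneously before picking out the diagonal, whereas you apply the already-packaged formula \eqref{sr-bas} term by term; since \eqref{sr-bas} was itself obtained from that very integral, the two computations are the same.
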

\begin{proof}We use disk coordinates $w=\frac{\tau-i}{\tau+i}$,
$u=\frac{z-i}{z+i}$ and $v=\frac{ip-i}{ip+i}=\frac{p-1}{p+1}$. We have
$v\in (0,1)$ and should take $C_1$ corresponding to $|w|=c_1$ with
$v<c_1<1$. Then $\s_{r_3} \kp(i,ip;\cdot)(z)$ is applicable for
$|u|>c_1$.

We have
\begin{align*}\s_{r_3} &\kp(i,ip;\cdot)(z)
\= \frac1\pi \frac 1u \int_{|w|=c_1} 1^{r_1-2} \, \bigl(
\frac{1-v/w}{1-v}\Bigr)^{r_2-2}\,
\hypg21\bigl(1,1;r_3;w/u\bigr)\,\frac1{2i}\,\frac{dw}w
\displaybreak[0]\\
&\= \frac1{2\pi i}\,\frac 1u (1-v)^{2-r_2} \, \int_{|w_|=c_1}\sum_{n\geq
0} \frac{(2-r_2)_n}{n!} \frac{v^n}{w^n} \sum_{m\geq 0}
\frac{m!}{(r_3)_m} \, \frac{w^m}{u^m} \, \frac{dw}w
\displaybreak[0]\\
&\= u^{-1}(1-v)^{2-r_2} \hypg21\bigl(1,2-r_2;r_3;v/u\bigr)
\displaybreak[0]\\
&\= 2^{2-r_2}\,(p+1)^{r_2-2}\,\frac{z+i}{z-i}\, \hypg21\Bigl(
1,2-r_2;r_3;\frac{(p-1)(z+i)}{(p+1)(z-i)}\Bigr)\,.\qedhere
\end{align*}
\end{proof}

\begin{lem}\label{lem-gtf}Let $r_3=4-r_1-r_2\not\in \ZZ_{\leq 0}$, and
$\tau_1\neq \tau_2$ in $\uhp$. For $\al \in \pD^{-\om}(r_3/2,r_3)$
represented by the holomorphic function $h^+$ on $\uhp$ and zero on
$\lhp$, and $f^-(\tau)=\kp(\tau_1,\tau_2;\tau)$
\bad \bigl[ \al&, f^- \bigr]_{r_3} \= \frac{(2i)^{2-r_2}}\pi \,\bigl|
c\tau_1+d\bigr|^{2r_1-2}\, (\tau_2-\bar \tau_1)^{r_2-2} \int_{z\in C}
h^+(z) \\
&\qquad\hbox{} \cdot
 \bigl( \frac{z+i}{z-\bar \tau_1}\Bigr)^{1-r_3}\, \hypg21\Bigl(
1,2-r_2;r_3;\frac{\tau_2-\tau_1}{\tau_2-\bar\tau_1}\,
\frac{z-\bar\tau_1}{z-\tau_1}\Bigr)\,\frac{dz}{(z-\tau_1)(z+i)}\,, \ead
where $C$ is a wide closed curve in $\uhp$ encircling all singularities
of the integrand in $\uhp$, and where $g=\matc abcd\in \SL_2(\RR)$ and
$p$ satisfy $g\tau_1=i$, $g\tau_2=ip$, $p\geq 1$, and
$\arg(ci+\nobreak d) \in [0,\pi)$.
\end{lem}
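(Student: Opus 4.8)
The plan is to run everything through the explicit formula \eqref{di} for the bilinear form, which writes $[\al,f^-]_{r_3}$ as $\tfrac1\pi\int_{z\in C}h^+(z)\,(\s_{r_3}f^-)(z)\,\tfrac{dz}{(z+i)^2}$ with $f^-(\tau)=\kp(\tau_1,\tau_2;\tau)$, and then to produce a sufficiently explicit description of $\s_{r_3}\kp(\tau_1,\tau_2;\cdot)$ — only up to functions holomorphic on $\uhp$, since by the last clause of Theorem~\ref{thm-dual-ps}(i) such summands do not change the bracket. First I would choose $g=\matc abcd\in\SL_2(\RR)$ with $g\tau_1=i$ and $g\tau_2=ip$ for some $p\geq1$; this is possible because $\SL_2(\RR)$ is transitive on $\uhp$ and the stabiliser $\mathrm{SO}(2)=\mathrm{Stab}(i)$ can move the image of $\tau_2$ onto the positive imaginary axis, the sense of rotation fixing the normalisation $p\geq1$, and the lift $\tilde g\in\tG$ is then pinned down by the convention $\arg(ci+d)\in[0,\pi)$. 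Under the hypothesis $r_3=4-r_1-r_2\notin\ZZ_{\leq0}$ both $\s_{r_3}$ (on $\mD^\om(r_3/2,-r_3)$) and the duality of Theorem~\ref{thm-dual-ps}(i) are available, and $\kp(i,ip;\cdot)$ lies in $\mD^\om(r_3/2,-r_3)$.

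Next I would reduce to the base case $(i,ip)$. By Lemma~\ref{lem-kps3kp}(i), $\kp$ is invariant under $|^-_{2-r_1,2-r_2,r_3}g$; spelling this out in the three variables and specialising the first two to $(\tau_1,\tau_2)$ gives
\[
\kp(\tau_1,\tau_2;\cdot)\=(c\tau_1+d)^{r_1-2}\,(c\tau_2+d)^{r_2-2}\,\bigl(\kp(i,ip;\cdot)\big|^\prs_{r_3/2,-r_3}\tilde g\bigr)\,.
\]
Applying the linear operator $\s_{r_3}$, the scalars pull out, and by the remark \eqref{trl-lift} (equivalently Lemma~\ref{lem-kps3kp}(ii)) the function $\s_{r_3}\bigl(\kp(i,ip;\cdot)|^\prs_{r_3/2,-r_3}\tilde g\bigr)$ differs from $\bigl(\s_{r_3}\kp(i,ip;\cdot)\bigr)\big|^\prs_{1-r_3/2,2-r_3}\tilde g$ only by an element of $\pD^\om(1-r_3/2,2-r_3)$, i.e.\ by a function holomorphic on $\uhp$, which may be dropped inside $[\al,\cdot]_{r_3}$. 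Into this I would insert the closed formula of Lemma~\ref{lem-skr} for $\s_{r_3}\kp(i,ip;\cdot)$ together with the explicit principal series action \eqref{act-ucg} at parameters $(s,p)=(1-r_3/2,2-r_3)$: for these parameters the exponents $p/2-s$ and $s-p/2$ both vanish, so the action collapses to composition with $z\mapsto gz$ followed by multiplication by $(a+ic)^{-(2-r_3)}\bigl(\tfrac{z+i}{z-g^{-1}(-i)}\bigr)^{2-r_3}$.

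What remains is bookkeeping with standard $\SL_2(\RR)$ identities: $gz-gw=\tfrac{z-w}{(cz+d)(cw+d)}$; $g^{-1}(-i)=\bar\tau_1$ since $g$ is real with $g\tau_1=i$; $|c\tau_1+d|^2=\im\tau_1$ and $\tau_1-\bar\tau_1=2i\,\im\tau_1$; and $(c\tau_1+d)(a-ic)=1$, whence $a+ic=(c\bar\tau_1+d)^{-1}$ — this last from $\det g=1$ combined with the relation $a\tau_1+b=i(c\tau_1+d)$ expressing $g\tau_1=i$. These give in particular $p+1=\tfrac{\tau_2-\bar\tau_1}{i(c\bar\tau_1+d)(c\tau_2+d)}$. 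Feeding these in, the $(c\tau_2+d)$-powers cancel, $2^{2-r_2}i^{2-r_2}=(2i)^{2-r_2}$, the powers of $c\tau_1+d$, $c\bar\tau_1+d$ and $a+ic$ collapse (using $2-r_3=r_1+r_2-2$) to $|c\tau_1+d|^{2r_1-2}$, the $z$-dependence (after absorbing the $\tfrac{dz}{(z+i)^2}$ of the duality integral) becomes $\bigl(\tfrac{z+i}{z-\bar\tau_1}\bigr)^{1-r_3}\tfrac{dz}{(z-\tau_1)(z+i)}$, and the hypergeometric argument $\tfrac{(p-1)(gz+i)}{(p+1)(gz-i)}$ turns into $\tfrac{\tau_2-\tau_1}{\tau_2-\bar\tau_1}\cdot\tfrac{z-\bar\tau_1}{z-\tau_1}$; the homotopy condition on $C$ becomes the condition that $C$ be a wide curve in $\uhp$ encircling all singularities of the integrand there.

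I expect this last paragraph to be the real obstacle. Each of the factors above is a power of a complex number with its own branch convention, the lift $\tilde g$ carries a central-character phase, and Lemma~\ref{lem-skr} and \eqref{act-ucg} already fix specific choices of argument; the work is to check that all of these phases conspire to give exactly $|c\tau_1+d|^{2r_1-2}$ — a positive real number raised to a real power — with no stray root of unity, and likewise that the quotient powers in $z$ split as written (this is legitimate because $z+i$, $z-\bar\tau_1$ and $z-\tau_1$ are positioned so that the relevant arguments stay in $(-\pi,\pi)$). The normalisation $\arg(ci+d)\in[0,\pi)$ in the statement is precisely what makes this come out cleanly; everything else is routine manipulation of the Euler-type integral already assembled in Lemmas~\ref{lem-kps3kp} and~\ref{lem-skr}.
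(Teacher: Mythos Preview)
Your approach is essentially the paper's: use Theorem~\ref{thm-dual-ps}(i) for the bracket, invoke Lemma~\ref{lem-kps3kp} to replace $\s_{r_3}\kp(\tau_1,\tau_2;\cdot)$ (modulo functions holomorphic on $\uhp$) by the $\tilde g$-translate of $\s_{r_3}\kp(i,ip;\cdot)$, insert Lemma~\ref{lem-skr}, and then do the algebraic bookkeeping with the $\SL_2(\RR)$ identities. The paper records the same identities you list (e.g.\ $a+ic=(c\bar\tau_1+d)^{-1}$, $p\pm1=-i(g\tau_2-g\tau_1^{\mp})$) and then writes ``after some computations we arrive at the formula.''

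The one genuine difference is how the branch ambiguities are handled. You propose to track all the argument conventions directly and check that they conspire to give the positive real $|c\tau_1+d|^{2r_1-2}$ with no stray root of unity. The paper sidesteps this: it first takes $\tau_1,\tau_2$ close together, so that $g$ is near the identity and all powers of products and quotients can be split without any branch issues; then it observes that both sides of the claimed identity are real-analytic in $(\tau_1,\tau_2)\in\uhp^2$, so equality propagates. This is quicker and more robust than chasing phases factor by factor, and it is worth knowing as a standard device; your direct verification would of course also work, but the analyticity argument is what makes the paper's proof short.
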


\rmrk{Remarks}The matrix $g$ is unique for given different $\tau_1$ and
$\tau_2$.

This is a hybrid formula. The right hand side depends directly on
$\tau_1$ and $\tau_2$, and indirectly via $c$ and $d$. The integrand is
not holomorphic in $\tau_1$ and~$\tau_2$.

\begin{proof}In the integral for $[\al,f^-]_{r_3}$ in
Theorem~\ref{thm-dual-ps} we can replace
$\s_{r_3}\kp(\tau_1,\tau_2;\tau)$ by
$(\s_{r_3}\kp)|^+_{2-r_1,2-r_2,r_3} \tilde g \,(\tau_1,\tau_2;\tau)$,
by Lemma~\ref{lem-kps3kp}. The choice of $g$ is such that we can apply
the explicit formula in Lemma~\ref{lem-skr}. We try to write the result
in terms of $\tau_1=g^{-1}i$ and $\tau_2=g^{-1}(ip)$ as far as
possible. We use
\begin{align*}a+ic &\= \frac1{c\bar\tau_1+d}& a-ic&\=
\frac1{2\tau_1+d}\\
p+1&\= -i(g\tau_2-g\bar\tau_1)& p-1&\= -i(g\tau_2-g\tau_1)
\end{align*}
After some computations we arrive at the formula in the lemma. We handle
powers of products and quotients by first taking $\tau_1$ and $\tau_2$
near to each other, and hence $g$ near to the unit matrix. We observe
that both sides of the equality are real-analytic in $\tau_1$
and~$\tau_2$.
\end{proof}


\iflitnum
\newcommand\bibit[4]{
\bibitem {#1}#2: {\em #3;\/ } #4}
\newcommand\bibitq[4]{
\bibitem {#1}#2: {\em #3\/ } #4}
\else
\newcommand\bibit[4]{
\bibitem[#1] {#1}#2: {\em #3;\/ } #4.}
\newcommand\bibitq[4]{
\bibitem[#1] {#1}#2: {\em #3\/ } #4.}
\fi
\newcommand\bibitn[4]{} 
\raggedright

\setcounter{secnumdepth}{2} \tableofcontents

\end{document}